\documentclass[11pt,reqno]{amsart}
\usepackage{amsfonts}
\usepackage{amsmath}
\usepackage{amssymb}
\usepackage{amsthm,bbm,bm}
\usepackage{multicol}
\usepackage{stmaryrd}
\usepackage{cite}
\usepackage{epsfig}
\usepackage{color}
\usepackage{graphics}
\usepackage{graphicx}
\usepackage{hyperref}
\usepackage{subfigure}
\usepackage{epstopdf}
\usepackage{multicol,graphics}
\newcommand\bes{\begin{eqnarray}}
\newcommand\ees{\end{eqnarray}}
\newcommand\R{\mathbb R}
\newtheorem{theorem}{Theorem}[section]
\newtheorem{lemma}[theorem]{Lemma}
\newtheorem{corollary}[theorem]{Corollary}

\newtheorem{definition}[theorem]{Definition}

\newtheorem{proposition}[theorem]{Proposition}
\newtheorem{hypothesis}[theorem]{Hypothesis}
\numberwithin{equation}{section}
\allowdisplaybreaks

\begin{document}
\title[Propagation Dynamics  for Multidimensional Nonlocal Diffusion ]{\textbf{Propagation  Dynamics for Multidimensional Nonlocal Diffusion: A General Freidlin-G\"artner Formula}}

\author[Guo, Li and Liu]{Hongjun Guo$^{1}$, Wan-Tong Li$^{2}$, Biao Liu$^{2,*}$}

\thanks{\hspace{-.6cm}
$^1$ School of Mathematical Sciences, Key Laboratory of Intelligent Computing and Applications (Ministry of Education), Institute for Advanced Study, Tongji University, Shanghai 200000 P.R. China.\\
$^2$ School of Mathematics and Statistics, Lanzhou University, Lanzhou, Gansu 730000, P.R. China.\\
$^*${\sf Corresponding author} (lbiao2023@lzu.edu.cn).}

\date{\today}

\begin{abstract} In this paper, we  establish a unified geometric description for the propagation behavior of multidimensional nonlocal diffusion equations. By extending the classical Freidlin-G\"artner framework to general asymmetric, nonlocal operators, our theory naturally captures biased propagation--a regime where the intrinsic spreading set may exclude the origin. A key consequence is the representation of the  spreading set as a Minkowski sum, which holds for both bounded and unbounded initial supports. Within this framework, we derive uniform spreading estimates and prove the local Hausdorff convergence of level sets in arbitrary dimensions. Our work therefore not only recovers known isotropic results but also provides a complete characterization of the biased case. Moreover, the methods developed here are readily adaptable to a broader class of diffusion problems featuring general operators and nonlinearities.

\textbf{Keywords}: Nonlocal diffusion; Spreading sets; Freidlin-G\"artner formula; Biased propagation.

\textbf{AMS Subject Classification}: 35B06, 35B40, 35C07, 35K57, 35R09.
\end{abstract}
\maketitle

\section{introduction}
In this paper, we study the long-time dynamics  of solutions to the nonlocal diffusion equation
\begin{equation}\label{equation}
u_t = \mathcal{J}\star u - u + f(u),\ t>0,\ x \in \mathbb{R}^N,
\end{equation}
 with  the Heaviside-type initial condition
\begin{eqnarray}\label{U}
u_0(x) =\mathbbm{1}_U= \left\{\begin{array}{lll}
1, & x \in U, \\
0, & x \in \mathbb{R}^N \setminus U,
\end{array}
\right.
\end{eqnarray}
where $U$ is a  measurable subset of $\mathbb{R}^N$  (possibly unbounded),  referred to as the initial support. Here,
$$\mathcal{J}\star u-u=\int_{\mathbb{R}^N}\mathcal{J}(x-y)u(t,y)dy-u.$$
 The kernel $\mathcal J\geqslant 0$ is a probability density, but crucially, we do \textbf{not} assume it is symmetric.  The nonlocal kernel $\mathcal J$  under consideration  satisfies the following assumptions:
$$(\textbf{J}): \mathcal{J}\in C(\mathbb{R}^N), \mathcal J(x)\geqslant 0,  \mathcal J(0)>0, \int_{\mathbb R^N}\mathcal J(x)dx=1, \int_{\mathbb{R}^N}\mathcal{J}(x)e^{\eta |x|}dx<+\infty \text{ for some }\eta>0.$$
 We consider a nonlinearity  $f$ of KPP type, satisfying
 $$ \textbf{(F)}:f\in C^1([0,1])\text{ is  a concave function such that  } f(0)=f(1)=0 \text{ and } f'(1)<0<f'(0).$$
The solution is understood in the sense that $u(t,x)\in C^1((0,+\infty);C(\R^N))$ and $\|u(t,\cdot)-u_0(\cdot)\|_{\infty}\rightarrow 0$ as $t\rightarrow 0^+$ and the maximum principle implies that $0\leqslant u(t,x)\leqslant 1$, see \cite{FKM}. We are interested in the long-time behavior of the solution $u(t,x)$.

For the classical reaction-diffusion model
\begin{equation}\label{local-equation}
u_t = \Delta u + f(u),\  t>0,\ x\in\mathbb{R}^N,
\end{equation}
the theory of propagation is built on two pillars\cite{F1937,KPP1937,AW1978}: the existence of traveling front solutions $u(t,x)=\phi(x\cdot e - ct)$
  with speed $c$ and direction $e\in\mathbb{S}^{N-1}$,  and the concept of the asymptotic spreading speed.   In 1937, Fisher \cite{F1937} and Kolmogorov et al. \cite{KPP1937} proved that for $N=1$, \eqref{local-equation} with (F) has traveling front solutions $\phi(x - ct)$ with  wave speed $c$  if and only if $c\geqslant c^*:=2\sqrt{f'(0)}$. Later, Aronson and Weinberger \cite{AW1978} established that  the asymptotic spreading speeds of solutions to \eqref{local-equation} with compactly supported initial conditions are equal to $c^*$ for all directions in high dimensions. Here, we say that for $e\in\mathbb{S}^{N-1}$, a positive quantity $s(e)$ is the (asymptotic) spreading speed of a solution $u(t,x)$ in the direction $e$ if
\begin{eqnarray}\label{sspeed}
\left\{\begin{array}{lll}
u(t,cte)\rightarrow 1, && \hbox{ as $t\rightarrow +\infty$ for every } 0< c<s(e),\\
u(t,cte)\rightarrow 0, && \hbox{ as $t\rightarrow +\infty$ for every } c>s(e).
\end{array}
\right.
\end{eqnarray}In spatially heterogeneous media, the traveling front solutions and their speeds depend on  the direction, and so do the spreading speeds even for the solution with compactly supported initial condition. In particular, for spatially periodic equations with KPP type reaction terms and suitable conditions on coefficients
  \begin{equation}\label{periodic}
  u_t=\nabla\cdot(A(x)\nabla u)+q(x)\cdot \nabla u+f(x,u),
  \end{equation}
  it is known from \cite{BH02} that there is a minimal speed $c_e^*>0$ such that the equation admits pulsating traveling fronts $\Phi_e(x\cdot e-c_e t,x)$ in every direction $e\in\mathbb{S}^{N-1}$ if and only if $c_e\geqslant c_e^*$ and that
 the minimal speed $c_e^*$ is truly depending on the direction $e$. The spreading speed $s(e)$ of the solution with compactly supported initial condition is then given by
\begin{equation}\label{FG}
s(e)=\inf_{\xi\cdot e>0}\frac{c^*_\xi}{\xi\cdot e}>0,
\end{equation}
see \cite{F1984,FG1979,R2017,BFN2005,GHR2025,DuLiShen2022,DuLiXin2026,LZ2007}. This formula is called Freidlin-G\"artner formula as it was discovered by  Freidlin and G\"artner \cite{F1984,FG1979} by using large deviation probabilistic techniques.

From the definition of the spreading speed and the Freidlin-G\"artner formula, the positivity of the minimal speed $c_e^*$ is crucial to guarantee the existence of the spreading speed in every direction. However, as one will see in the sequel, the minimal speed of traveling front solutions for the nonlocal problem \eqref{equation} may be negative in some directions, see \cite{XLR2021,DFN} for the one-dimensional case. To avoid using the positivity of the spreading speed, we focus on the spreading set instead.

\begin{definition}\label{spreadingset} We say that a bounded solution $u(t,x)$ to \eqref{equation} with initial data $u_0(x)$ admits a nonempty \textit{spreading set}  $\mathcal{W} \subset \mathbb{R}^N$ if $\mathcal{W}=\operatorname{int}(\overline {\mathcal W})$ and
\begin{eqnarray}\label{definitionw}
\left\{\begin{array}{lll}
\lim \limits_{t \rightarrow+\infty}\Big(\displaystyle\inf_{x \in C} u(t, t x)\Big)=1 && \text { for any non-empty compact set } C \subset \mathcal{W}, \\
\lim \limits_{t \rightarrow+\infty}\Big(\displaystyle \sup_{x \in C} u(t, t x)\Big)=0 && \text { for any non-empty compact set } C \subset \mathbb{R}^N \backslash \overline{\mathcal{W}} .
\end{array}
\right.
\end{eqnarray}
\end{definition}

The condition $\mathcal{W}=\operatorname{int}(\overline {\mathcal W})$  in Definition~\ref{spreadingset} guarantees  the uniqueness of the spreading set. To see this, suppose $\mathcal W'$ is another spreading set. Since $\mathcal W$ is open, for each $x\in\mathcal W$, there is a compact $C$ with $x\in C\subset \mathcal W$. Property \eqref{definitionw} then implies  $x\not\in\mathbb R^{N}\backslash\overline{\mathcal W'}$, hence $\mathcal W\subset \overline{\mathcal W'}$. Because  $\mathcal W$ is open,  it follows that $\mathcal W \subset \operatorname{int}(\overline{\mathcal W'})=\mathcal W'$, and vice versa.

 Clearly, the spreading set of \eqref{local-equation} with compactly supported initial condition is the ball with radius $c^*$ centered at $0$. Equation \eqref{periodic} with compactly supported initial condition admits a spreading set, which we call the  \textbf{intrinsic spreading set} $\mathcal S$ (all equations \eqref{equation}, \eqref{local-equation} and \eqref{periodic} share the same type of intrinsic spreading set)
\begin{equation}\label{mathcals}
\mathcal{S}:=\{x\in\mathbb{R}^N:\ x\cdot e < c^*_e \text{ for every } e\in\mathbb{S}^{N-1}\},
\end{equation}
which is equivalent to
\[\mathcal{S}=\{re:\ e\in \mathbb{S}^{N-1},\, 0\leqslant r<s(e)\} \hbox{ with $s(e)$ given by \eqref{FG}},\]
see \cite{R2017}. Recent works by Hamel and Rossi \cite{HR,HRJEMS} extended the Freidlin-G\"artner formula and the spreading set  to unbounded initial supports for \eqref{local-equation}.

As we mentioned, the framework above relies on the positivity of   all minimal wave speeds $c_e^*$,  which is equivalent to that the spreading set $\mathcal S$ contains the origin. This fact leads to  global invasion, that is, $u(t,x)\rightarrow 1$ locally uniformly in $\mathbb{R}^N$ as $t\rightarrow +\infty$. However, for \eqref{equation},
 the asymmetry of the kernel $\mathcal J$ of \eqref{equation} breaks the rotational invariance of the equation and $\mathcal J$.  The spreading set $\mathcal S$ may \textbf{not contain the origin}, which we call  \textbf{biased propagation}. In such a case, the  speed $s(e)$ defined as $\sup\{r\in\mathbb{R}: re\in \mathcal{S}\}$ can be negative or even  $-\infty$. It seems that the classical formula \eqref{FG} is no longer applicable.

 Our primary objective is to establish a unified geometric description of spreading that encompasses both isotropic (classical) and anisotropic (biased)  propagation of \eqref{equation} with \eqref{U}.  We focus on the following fundamental problem: for the   nonlocal equation \eqref{equation} with  unbounded initial support \eqref{U} which  may exhibit biased propagation,   can one establish the existence and geometry  of the  spreading set, and ultimately derive  a generalized Freidlin-G\"artner-type formula?

 To describe the following results and for later convenience,  we recall some definitions in $\mathbb R^N$ and borrow some notations from \cite{HR}.
\subsection*{Notations} Let ``$\vert\cdot \vert$'' denote  the Euclidean norm, and
$
B_r(x):=\left\{y \in \mathbb{R}^N:|y-x|<r\right\}
$
 be the open Euclidean ball with  center $x \in \mathbb{R}^N$ and radius $r>0$ and $B_r:=B_r(0)$ for short. The distance from  $x \in \mathbb{R}^N$ to  $A \subset \mathbb{R}^N$ is denoted by  $\operatorname{dist}(x, A):=\inf \{|y-x|: y \in A\}$ with  the convention $\operatorname{dist}(x, \varnothing)=+\infty$. For two nonempty sets $A, B\subset \mathbb R^N$, we define their  Hausdorff distance  by
 $$d_{\mathcal H}(A,B)=\max\left\{\sup_{x\in A }\text{dist}(x,B),\sup_{y\in B }\text{dist}(A,y) \right\}.$$
   The Minkowski sum of sets $A$ and $B$ is defined as
 $A+B:=\left\{x+y:x\in A, y\in B\right\}$.

To characterize spreading speeds, define the sets of bounded and unbounded directions as in \cite{HR}:
$$
\mathcal{B}(U):=\left\{e \in \mathbb{S}^{N-1}: \liminf _{\tau \rightarrow+\infty} \frac{\operatorname{dist}(\tau e, U)}{\tau}>0\right\}
$$
and
$$
\mathcal{U}(U):=\left\{e \in \mathbb{S}^{N-1}: \lim _{\tau \rightarrow+\infty} \frac{\operatorname{dist}(\tau e, U)}{\tau}=0\right\}.
$$
These sets characterize the asymptotic directions of $U$.
 We also define the notion of the  positive interior $U_\delta$($\delta>0$) of the set $U$ as
$U_\delta:=\{x \in U: \operatorname{dist}(x, \partial U) \geqslant \delta\}$. For $\lambda\in(0,1)$ and $t>0$,  the upper level set of the solution to equation \eqref{equation} is defined by
  \begin{equation}\label{Flambda}
F_\lambda(t):=\left\{x \in \mathbb{R}^N: u(t, x)>\lambda\right\}
.
\end{equation}
 We set $\mathbb R^+:=(0,+\infty)$, and for $x\in\mathbb R^N\backslash\{0\}$, $\hat x:=\frac{x}{|x|}.$ For $e,\xi\in\mathbb S^{N-1}$ and $\lambda\in[0,1]$, define the unit vector
\begin{equation}\label{exi}
e_{\xi}(\lambda):=\frac{\lambda e-(1-\lambda)\xi}{|\lambda e-(1-\lambda)\xi|},
\end{equation}
which  can be viewed  geometrically  as the  rotation of $e$ towards $-\xi$ by an angle depending on $\lambda$.

\subsection*{Outline of the paper}
The paper is organized as follows. Section~\ref{mainresults} states the main results, including  convergence to a linearly expanding set for compactly supported data,  and the variational characterization of directional spreading speeds and the geometric description of the spreading set for unbounded initial supports.  It also  provides illustrative examples, discussions, and additional remarks. Numerical simulations (see Figure~\ref{figure-1}) visually demonstrate these biased propagation scenarios under different kernel centers. Section~\ref{preliminaries} collects  the necessary preliminaries and auxiliary facts. Section~\ref{proofs} is devoted to the proofs of the main theorems, which rely on the properties of the intrinsic spreading set and the construction of anisotropic supersolutions.
\section{Main Results}\label{mainresults}

 The existence of traveling front solutions  for equation  \eqref{equation} under assumptions (J) and (F) is established in \cite{LPL2005,SLW2011,CDM2008,XLR2021,FKT2019,DFN}. Recall that  an entire solution $u(t,x)$ of \eqref{equation} is called a traveling front connecting 1 to 0 if it is   of the form $\phi_e(x \cdot e-c t)$, for some $e \in \mathbb{S}^{N-1}, c \in \mathbb{R}$ and $\phi_e: \mathbb{R} \rightarrow(0,1)$ satisfying $\phi_e(-\infty)=1$ and $\phi_e(+\infty)=0$. Let $J_e$ denote  the real function defined as $$J_e(z):=\int_{\Pi_z}\mathcal{J}(x)dx,$$
where $\Pi_z=\{x\in\mathbb{R}^N:x\cdot e=z\}$.
 Then, by Fubini's theorem
$$ \int_{\mathbb R}J_e(z)dz=\int_{\mathbb R^N}\mathcal{J}(x)dx=1$$
and for $z,s\in\mathbb R$, $J_e(z-s)=\int_{x\cdot e=z-s}\mathcal J(x)dx=\int_{x\cdot e=z, y\cdot e=s}J(x-y)dx$,
$$\int_{\mathbb{R}^N}\mathcal{J}(x-y)\phi_e(y\cdot e-ct)dy=\int_{\mathbb{R}}\int_{y\cdot e=s}\mathcal{J}(x-y)\phi_e(s-ct) dyds=\int_{\mathbb{R}}J_e(z-s)\phi_e(s-ct)ds.$$
The profile $\phi_e$ of a traveling front should satisfy
\begin{equation}\label{traveling-wave}
\begin{cases}J_e*\phi_e-\phi_e+c \phi_e^{\prime}+f(\phi_e)=0,  \\ \phi_e(-\infty)=1>\phi_e(z)>\phi_e(+\infty)=0,\  z \in \mathbb{R},\end{cases}
\end{equation}
where $J_e*\phi_e=\int_{\mathbb{R}}J_e(z-s)\phi_e(s-ct)ds$.
Under  assumption (J), the function $J_e$ is well-defined   and satisfies
$$J_e\in C(\mathbb{R}),\  J_e\geqslant 0,\ J_e(0)>0,\  \int_{\mathbb R}J_e(z)dz=1,\ \int_{\mathbb{R}}|z|J_e(z)dz<+\infty \text{ and }  \int_{\mathbb{R}}J_e(z)e^{\eta|z|}dz<+\infty.$$

 Under assumptions (J) and (F), the existence, monotonicity, and uniqueness of traveling waves for equation \eqref{traveling-wave} are well-established, as summarized in the following.
\begin{proposition}[see \cite{CDM2008,XLR2021}]\label{Prop-1}
Suppose that conditions (J) and (F) hold. For any $e\in\mathbb S^{N-1}$, there exists a minimal wave speed $c_e^*\in\mathbb R$ such that:
\begin{enumerate}
\item[1.]  Equation \eqref{traveling-wave} admits a monotonic decreasing traveling wave $\phi_e(x\cdot e-c_et)$ if and only if $c_e \geqslant c_e^*$.
\item[2.] If $c_e \neq 0$, the profile $\phi_e$ is unique up to translation and is of class $C^1(\mathbb{R})$.
\item[3.] If $c_e = 0$, a decreasing solution $\phi_e$ exists but may be non-unique and potentially discontinuous.
\end{enumerate}
Moreover, the critical speed $c_e^*$ is given explicitly by
    \begin{equation}\label{c^*}
    c_e^*=\inf_{\lambda>0}\frac{1}{\lambda}\left(\int_{\mathbb R}J_e(z)e^{\lambda z}dz+f'(0)-1\right).
    \end{equation}
\end{proposition}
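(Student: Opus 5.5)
The plan is to reduce Proposition~\ref{Prop-1} to the known one-dimensional theory of traveling waves for nonlocal KPP equations with asymmetric kernels. Fix $e\in\mathbb S^{N-1}$. By the Fubini computation preceding the statement, $\phi_e(x\cdot e-ct)$ solves \eqref{equation} exactly when $\phi_e$ solves \eqref{traveling-wave} with the one-dimensional kernel $J_e$. We have already checked that $J_e$ is continuous, nonnegative, positive at $0$, has unit mass, and has an exponential moment of order $\eta$. So it suffices to prove the statement for a scalar equation $J*\phi-\phi+c\phi'+f(\phi)=0$ on $\mathbb R$, where $J$ is a possibly asymmetric probability kernel. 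This is the setting of \cite{CDM2008,XLR2021}.

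For the one-dimensional problem, I would first linearize at $0$. Substituting $e^{-\lambda z}$ gives the characteristic function
\[
\Phi(\lambda,c)=\int_{\mathbb R}J_e(z)e^{\lambda z}\,dz-1+f'(0)-c\lambda .
\]
The function $\lambda\mapsto\int J_e e^{\lambda z}$ is finite for $\lambda<\eta$ (possibly on a larger interval) and strictly convex. Its value at $\lambda=0$ is $1$, so $\Phi(0,c)=f'(0)>0$. Define $c_e^*$ by \eqref{c^*}. Then for $c>c_e^*$ there is a $\lambda>0$ with $\Phi(\lambda,c)<0$, equivalently a smallest positive root $\lambda_1(c)$; at $c=c_e^*$ this root is a double root or the limiting case. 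Note that $c_e^*>-\infty$: the quantity $\frac1\lambda(\int J_e e^{\lambda z}-1+f'(0))$ is bounded below, since $\int J_e e^{\lambda z}\ge e^{\lambda m}$ with $m=\int zJ_e$ by Jensen. Also $c_e^*$ may be negative, because $m$ can be very negative.

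Existence for $c>c_e^*$ comes from the standard monotone iteration between ordered super- and subsolutions. For the supersolution take $\bar\phi=\min\{1,e^{-\lambda_1 z}\}$. For the subsolution take $\underline\phi=\max\{0,e^{-\lambda_1 z}-Me^{-(\lambda_1+\varepsilon)z}\}$, and use the KPP bound $f(u)\ge f'(0)u-Ku^{1+\delta}$ from the concavity and $C^1$ regularity in (F). Then:
\begin{itemize}
\item Monotonicity of the resulting profile follows from sliding arguments.
\item The case $c=c_e^*$ is obtained by a limiting procedure $c_n\downarrow c_e^*$, normalizing $\phi_n(0)=1/2$ and using Helly's theorem.
\item Nonexistence for $c<c_e^*$ follows by taking a two-sided Laplace transform of the profile near $+\infty$. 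If $\phi_e$ existed, it would decay exponentially (from $\phi_e'\le 0$ and the equation). The Laplace transform $\int\phi_e e^{\lambda z}$ would then have a finite abscissa $\lambda_0$, and Widder's theorem together with $\Phi(\lambda,c)>0$ for all $\lambda>0$ gives a contradiction.
\end{itemize}

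Uniqueness up to translation when $c\neq0$ uses the sliding method. When $c\ne0$ the equation is a genuine ODE $c\phi'=\phi-J*\phi-f(\phi)$ with a continuous right-hand side, so $\phi_e\in C^1$ is automatic. The sliding comparison additionally requires exact asymptotics at $+\infty$, namely $\phi_e(z)\sim Ae^{-\lambda_1 z}$ (respectively $Aze^{-\lambda_1 z}$ when $c=c_e^*$); these come from Ikehara's Tauberian theorem. When $c=0$ the equation degenerates to $J*\phi-\phi+f(\phi)=0$. The profile obtained from the limit is still monotone, but it may be discontinuous, and I expect no uniqueness claim to be needed there. The main obstacle is the nonexistence and sharp asymptotics step in the asymmetric setting with $c$ possibly negative or zero. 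There, the sign of $c$ affects which direction the integration runs and how regularity is obtained, and this is exactly where I would rely on the detailed analysis of \cite{XLR2021,CDM2008} rather than redo it.
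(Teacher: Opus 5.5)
Your argument follows the paper's route. The paper gives no proof of its own beyond the Fubini reduction of \eqref{equation} to the one-dimensional problem \eqref{traveling-wave} with kernel $J_e$, displayed just before the statement, followed by the citation of \cite{CDM2008,XLR2021}. Your first paragraph is exactly this. Your Jensen bound $c_e^*\geqslant\int_{\mathbb R}zJ_e(z)\,dz$ also agrees with the later observation $\mathbf p\cdot e<c_e^*$.

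Your supplementary sketch, however, contains a step that is wrong under the stated hypotheses.

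\textbf{The bound on $f$ near $0$.} Hypothesis (F) does \emph{not} give $f(u)\geqslant f'(0)u-Ku^{1+\delta}$. Concavity yields only $f(u)\leqslant f'(0)u$, and $f\in C^1$ yields only $f(u)=f'(0)u+o(u)$. For example, take $f(u)=f'(0)u-u/\log(1/u)$ near $0$, extended to a concave $C^1$ function on $[0,1]$ with $f(1)=0>f'(1)$. It satisfies (F), yet $(f'(0)u-f(u))/u^{1+\delta}\to+\infty$ for every $\delta>0$. Your sketch uses this bound in two places:
\begin{itemize}
\item it is what makes $\max\{0,e^{-\lambda_1z}-Me^{-(\lambda_1+\varepsilon)z}\}$ a subsolution;
\item it is what feeds the Ikehara argument for $\phi_e\sim Ae^{-\lambda_1 z}$, since it gives analyticity of the transform of $f'(0)\phi_e-f(\phi_e)$ beyond the abscissa of $\int\phi_e e^{\lambda z}dz$.
\end{itemize}
So under (F) alone, these steps, and the sliding proof of uniqueness built on them, are not justified. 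You either need an extra H\"older-type condition on $f$ near $0$, or you must take existence and uniqueness wholesale from the cited works after checking their hypotheses.

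\textbf{The endpoint case of the Laplace-transform steps.} These steps are standard only when $\Lambda_e(\lambda)\to+\infty$ as $\lambda\uparrow\lambda_e^+$. Assumption (J) also allows case (ii) of Proposition~\ref{Lambda}. There $c_e^*=G_e(\lambda_e^+)$ may be attained at the edge of $\mathcal D_e$, and $\Phi(\cdot,c)$ is defined only on $(0,\lambda_e^+]$ and stays bounded. In that case two of your steps fail as written:
\begin{itemize}
\item the double-root asymptotics $Aze^{-\lambda_1 z}$ at $c=c_e^*$;
\item the usual Widder contradiction for $c<c_e^*$, which uses $\Phi(\lambda,c)\to+\infty$ at the end of $\mathcal D_e$.
\end{itemize}
The paper itself remarks that \cite{XLR2021} treats only the blow-up case.
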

 Although the individual signs of $c_e^*$ and $c_{-e}^*$ may not be known, we have   $c_e^*+c_{-e}^*>0$ from \cite{XLR2021,FKT2019}. Whether an analogous property holds for more general reaction terms (e.g., ignition or bistable types) remains an open question.
 Now, we can  characterize the general   set  $\mathcal S$ defined by \eqref{mathcals}   via minimal wave speeds $c_e^*$ and \eqref{FG}.
\begin{proposition}\label{NewS}For $c_e^*\in\mathbb R$ defined by \eqref{c^*} and
$s(e)=\inf\limits_{\xi\cdot e>0}\frac{c_{\xi}^*}{\xi\cdot e}\in[-\infty,+\infty)$,  we have
\begin{equation}\label{se}
\mathcal S:=\{x\in\mathbb{R}^N:\ x\cdot e < c^*_e \text{ for every } e\in\mathbb{S}^{N-1}\}=\left\{re:e\in\mathbb S^{N-1},-s(-e)<r<s(e)
\right\}
 \end{equation}
 with the convention that $re\not \in \mathcal S$ for any $r\in\mathbb R$ if $s(e)=-\infty$.
 Moreover, $\mathcal S$ is a  bounded nonempty convex set and  star-shaped with respect to some points.
\end{proposition}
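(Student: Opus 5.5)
Writing $c^*(\xi)$ for the minimal speed $c^*_\xi$, the approach is to recognize $c^*$ as a support function. For $\xi\in\mathbb R^N\setminus\{0\}$ set $h(\xi):=|\xi|\,c^*_{\hat\xi}$. From \eqref{c^*}, writing $\mu=\lambda|\xi|$, we get
\[
h(\xi)=\inf_{\mu>0}\frac{1}{\mu}\Big(\int_{\mathbb R^N}\mathcal J(x)e^{\mu\,\hat\xi\cdot x}\,dx\cdot\Big)\!\!\!\quad\text{rewritten as}\quad h(\xi)=\inf_{\lambda>0}\frac{1}{\lambda}\Big(\int_{\mathbb R^N}\mathcal J(x)e^{\lambda\xi\cdot x}\,dx+f'(0)-1\Big).
\]
Here I used $\int_{\mathbb R}J_e(z)e^{\lambda z}dz=\int_{\mathbb R^N}\mathcal J(x)e^{\lambda e\cdot x}dx$ by Fubini.

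\emph{Step 1: $h$ is positively homogeneous and convex.} Homogeneity follows from the substitution above. For convexity, set $G(p):=\int\mathcal J(x)e^{p\cdot x}dx+f'(0)-1$, which is convex in $p$ (finite at least for $|p|<\eta$). Then $h(\xi)=\inf_{\lambda>0}G(\lambda\xi)/\lambda$ is the perspective-type infimal function of $G$. Given $\xi_1,\xi_2$ and $\lambda_1,\lambda_2>0$, put $\lambda=(\lambda_1^{-1}+\lambda_2^{-1})^{-1}$ and weights $\theta_i=\lambda/\lambda_i$. By convexity, $G(\lambda(\xi_1+\xi_2))\le\theta_1G(\lambda_1\xi_1)+\theta_2G(\lambda_2\xi_2)$, which gives $h(\xi_1+\xi_2)\le h(\xi_1)+h(\xi_2)$. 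Sublinearity together with homogeneity is convexity.

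Finiteness comes from (J) and $f'(0)>0$. First, $c^*_e>-\infty$, since $G(\lambda e)/\lambda\ge(1+\lambda\int \mathcal J\, e\cdot x+f'(0)-1)/\lambda\ge \int\mathcal J\,x\cdot e$ by Jensen. Second, $c^*_e$ is bounded above uniformly in $e$ by taking $\lambda=\eta/2$. Hence $h$ is a finite sublinear, and hence continuous, function on $\mathbb R^N$, and it is bounded on $\mathbb S^{N-1}$.

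\emph{Step 2: $\mathcal S$ is bounded, convex and nonempty.} Convexity and openness hold because $\mathcal S=\{x: x\cdot e<h(e)\ \forall e\}$ is an intersection of half-spaces, with openness coming from the uniform strictness given by continuity of $h$ on the compact sphere. Boundedness follows from $|x|=x\cdot\hat x<h(\hat x)\le\max h$.

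Nonemptiness is the main obstacle, since $h$ may be negative in some directions. By the theory of support functions, a finite sublinear $h$ is the support function of the nonempty compact convex set $K=\{x: x\cdot e\le h(e)\ \forall e\}$ (Hahn--Banach). To get nonempty interior, I would show $h(e)+h(-e)>0$ strictly. The paper cites $c^*_e+c^*_{-e}>0$, but that alone is not enough in dimension $N\ge2$. The fix is to write $h=h_0+g$ with $g(e)=m\cdot e$, $m=\int\mathcal J x$, and $h_0$ sublinear with $h_0>0$ on the sphere. Indeed, Jensen gives $G(\lambda e)\ge\lambda m\cdot e+f'(0)$ with equality only if $\mathcal J$ is degenerate, so $G(\lambda e)/\lambda-m\cdot e\ge f'(0)/\lambda$. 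Taking the infimum over $\lambda$ needs more care, but combined with the uniform upper bound for large $\lambda$, I expect $h_0\ge\delta>0$ on $\mathbb S^{N-1}$. Then $m+B_\delta\subset\mathcal S$, so $\mathcal S$ is nonempty, and it is star-shaped with respect to every point of $\mathcal S$, in particular $m$.

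\emph{Step 3: polar representation.} Fix $e$. The line set $\{r: re\in\mathcal S\}$ is an open interval because $\mathcal S$ is open and convex. I would show its right endpoint is $s(e)$. Indeed, $re\in\mathcal S$ requires $r\,\xi\cdot e<c^*_\xi$ for all $\xi$. Constraints with $\xi\cdot e>0$ give $r<c^*_\xi/\xi\cdot e$. Constraints with $\xi\cdot e<0$ give the lower bound $r>-c^*_{\xi'}/(\xi'\cdot(-e))$, whose supremum is $-s(-e)$. Constraints with $\xi\cdot e=0$ read $0<c^*_\xi$. These are redundant given the other two families whenever the interval is nonempty, by continuity of $h$: approximate $\xi$ by $\xi\pm\epsilon e$. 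The inf/sup over the sphere being attained or approximated with strict inequality uses compactness and continuity of $h$. So $\{r: re\in\mathcal S\}=(-s(-e),s(e))$, which is empty exactly when $s(e)\le -s(-e)$, including the case $s(e)=-\infty$. This gives \eqref{se}.
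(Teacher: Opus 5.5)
Your Step~3 has a genuine gap. It lies in the claim that the constraints $r\,\xi\cdot e<c^*_\xi$ with $\xi\cdot e=0$ are redundant whenever $(-s(-e),s(e))\neq\varnothing$.

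Approximating $\xi\perp e$ by $\xi\pm\epsilon e$ gives $h(\xi+\epsilon e)\geqslant\epsilon\,s(e)$ and $h(\xi-\epsilon e)\geqslant\epsilon\,s(-e)$. In the limit this yields only $c^*_\xi\geqslant 0$, not the strict inequality you need. The case $c^*_\xi<0$ does force $s(e)=-\infty$, but the borderline case $c^*_\xi=0$ survives. There $\mathcal S\subset\{y\cdot\xi<0\}$ while $\mathbb Re\subset\{y\cdot\xi=0\}$, so the line misses $\mathcal S$. Yet if $\partial\mathcal S$ contains a nondegenerate segment $\{re:\ a\leqslant r\leqslant b\}$ of this line, then $ae,be\in\overline{\mathcal S}$ give $s(e)\geqslant b$ and $s(-e)\geqslant -a$. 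So the right-hand side of \eqref{se} contains every $re$ with $a<r<b$, and none of these points is in $\mathcal S$.

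This cannot be fixed by a better argument, because the configuration actually occurs under (J) and (F). Unwinding the formula for $c^*_e$ gives
$\overline{\mathcal S}=\{x:\ \sup_{p}\,(p\cdot x-M(p)+1)\leqslant f'(0)\}$, where $M(p)=\int_{\mathbb R^N}\mathcal J(x)e^{p\cdot x}dx$. Corners of the domain of $M$ then produce flat pieces of $\partial\mathcal S$.

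For instance, let $N=2$ and $g(z)=Ce^{-|z|}/(1+|z|^q)$ with $q>2$. Set $m(\lambda)=\int_{\mathbb R}g(z)e^{\lambda z}dz$, which is finite exactly for $|\lambda|\leqslant 1$, and $\mathcal J(x)=g(x_1-\sigma/2)\,g(x_2-\sigma/2)$, so $M$ is finite exactly on $[-1,1]^2$. Choose $\sigma$ with $e^{\sigma}m(1)^2=1-f'(0)$. For $f'(0)<1$ close enough to $1$ this gives $\sigma<-2m'(1)/m(1)$, so $\beta:=\partial_1M(1,1)=\partial_2M(1,1)<0$. On the quadrant $\{x_1,x_2\geqslant\beta\}$ the supremum is attained at the corner $p=(1,1)$ and equals $x_1+x_2+f'(0)$. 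Hence $c^*_{(1,1)/\sqrt2}=0$ and $\partial\mathcal S\supset\{(t,-t):\ |t|\leqslant|\beta|\}$. For $e=(1,-1)/\sqrt2$, the origin therefore lies in the right-hand side of \eqref{se} but not in $\mathcal S$; here $\partial\mathcal S$ is even $C^1$.

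The paper's own proof fails at the same point. For $\hat x\cdot\xi=0$ it asserts that $c^*_\xi\leqslant 0$ means $s(\hat x)=-\infty$, which holds only when $c^*_\xi<0$. The statement needs one of two repairs: a convention excluding every line $\mathbb Re$ with $\min_{\xi\perp e}c^*_\xi\leqslant 0$ (equivalently, every line missing $\mathcal S$), or a hypothesis forbidding segments of $\partial\mathcal S$ on lines through $0$. With that change your Step~3 is correct. If $c^*>0$ on $e^\perp\cap\mathbb S^{N-1}$, the ratios $c^*_\xi/(\xi\cdot e)$ blow up as $\xi\cdot e\downarrow 0$, so the infimum defining $s(e)$ is attained and the strict inequalities hold exactly.

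There are also some smaller points.
\begin{itemize}
\item Your nonemptiness step stops at ``I expect''. Close it as the paper does. The function $\psi_e(\lambda)=\Lambda_e(\lambda)-1-\lambda\,m\cdot e$ is convex, vanishes at $0$ and is positive on $(0,\infty)$, so $\psi_e(\lambda)/\lambda$ is nondecreasing. For fixed $\lambda_0\in(0,\eta)$ this gives $c^*_e-m\cdot e\geqslant\min\{f'(0),\psi_e(\lambda_0)\}/\lambda_0>0$, uniformly in $e$ by continuity.
\item Your remark that $c^*_e+c^*_{-e}>0$ is insufficient for $N\geqslant 2$ is wrong once Step~1 is in hand. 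The quantity $h(e)+h(-e)$ is the width of $K$ in direction $e$, and a compact convex set with positive width in every direction has nonempty interior.
\item Your support-function route differs from the paper's, which never proves that $\xi\mapsto|\xi|c^*_{\hat\xi}$ is sublinear. Your Step~1 is correct, though not needed for \eqref{se} itself: continuity of $e\mapsto c^*_e$ and compactness suffice there. What it buys is that $c^*$ is exactly the support function of $\overline{\mathcal S}$. Every hyperplane $\{x\cdot e=c^*_e\}$ then touches $\overline{\mathcal S}$ unconditionally, a fact the paper attributes to Hypothesis~\ref{mainhypothesis}.
\end{itemize}
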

We will see that although $s(e)$ and $s(-e)$ are not equal, they always satisfy  $s(e)+s(-e)>0$ once $s(e)$ is bounded.

\subsection{Compactly Supported}
 For compactly supported initial data, we  establish the convergence of the solution  to the linearly expanding set $t\mathcal S$. In particular, this convergence holds even when $\mathcal{S}$ does not contain the origin, which corresponds to the case of biased propagation.

\begin{theorem}\label{invasionS}Assume (J) and $(F)$ hold,   $U$ is a compact set with $U_\delta\not = \varnothing$ for some $\delta>0$. Let $\mathcal S'$ and $\mathcal S''$ be any  open set  such that $\overline {\mathcal S''}\subset \mathcal S\subset\overline{ \mathcal S}\subset \mathcal S'$, where $\mathcal S$ is given by \eqref{mathcals}. Then $\mathcal S$ is the intrinsic spreading set of \eqref{equation},  and  the solution $u(t,x)$  satisfies:
\begin{equation}\label{S}
 \limsup_{t\to+\infty}\left[ \sup_{x\not\in t\mathcal S'} u(t,x)\right]= 0,\
  \liminf_{t\to+\infty}\left[ \inf_{x\in t\mathcal S''} u(t,x)\right]= 1.
\end{equation}
\end{theorem}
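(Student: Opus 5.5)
The plan is to prove the two limits in \eqref{S} separately: an upper estimate from planar exponential supersolutions in every direction, and a lower estimate from compactly supported subsolutions placed along rays inside $t\mathcal S$. The biased case, where $0\notin\mathcal S$, is treated by moving to a frame that travels with a fixed point of $\mathcal S$. Once \eqref{S} holds for all admissible $\mathcal S',\mathcal S''$, Definition~\ref{spreadingset} follows. Here we use that $\mathcal S$ is open, convex, bounded and nonempty (Proposition~\ref{NewS}), so $\mathcal S=\operatorname{int}(\overline{\mathcal S})$, and every compact subset of $\mathcal S$ lies in some $t\mathcal S''$ after scaling.

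\emph{Upper bound.} Since $f$ is concave, $f(u)\le f'(0)u$. Hence for each $e\in\mathbb S^{N-1}$ and $\lambda>0$ the function
\begin{equation*}
\bar u(t,x)=\min\{1,\,Ae^{-\lambda(x\cdot e-c_\lambda t)}\},\qquad c_\lambda=\frac1\lambda\Big(\int_{\mathbb R}J_e(z)e^{\lambda z}dz+f'(0)-1\Big),
\end{equation*}
is a supersolution of \eqref{equation}. Because $U$ is compact, $u_0\le\bar u(0,\cdot)$ once $A$ is large. By \eqref{c^*} we may choose $\lambda_e$ with $c_{\lambda_e}<c_e^*+\epsilon$. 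Then $u(t,x)\le Ae^{-\lambda_e(x\cdot e-(c_e^*+\epsilon)t)}$.

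Now fix $\mathcal S'$. The set $\overline{\mathcal S}$ is compact and equals $\{x:x\cdot e\le c_e^*\ \forall e\}$; this needs upper semicontinuity of $e\mapsto c_e^*$, which holds because $c_e^*$ is an infimum of functions continuous in $e$. By a compactness argument over $\mathbb S^{N-1}$, finitely many directions $e_1,\dots,e_m$ and some $\epsilon>0$ give
\begin{equation*}
\mathbb R^N\setminus\mathcal S'\subset\bigcup_i\{x\cdot e_i\ge c_{e_i}^*+3\epsilon\}.
\end{equation*}
This step requires $\{x: x\cdot e_i< c_{e_i}^*+3\epsilon\ \forall i\}\subset\mathcal S'$, which is a standard polytope approximation of the convex set. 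Scaling by $t$ and applying the exponential bound in each direction $e_i$ gives $\sup_{x\notin t\mathcal S'}u\le A e^{-2\epsilon\min_i\lambda_{e_i} t}\to0$.

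\emph{Lower bound.} This is the main obstacle, since $0$ need not lie in $\mathcal S$ and the solution may die out near the origin. I would first show that $u$ does not vanish: hair-trigger persistence follows because $U_\delta\neq\varnothing$ and $\mathcal J(0)>0$ give positivity, and KPP growth together with the interior point $p\in\mathcal S$ gives $u(t,x)\ge \kappa>0$ near $tp$.

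Next I would pass to the moving frame $y=x-tp$. In this frame the equation becomes $u_t=\mathcal J\star u-u+p\cdot\nabla_y u+f(u)$. Equivalently, I would use the 1D directional speeds, in which every $c^*_e$ is replaced by $c^*_e-p\cdot e>0$, and the corresponding set $\mathcal S-p$ contains $0$.

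For $y\in \overline{\mathcal S''}-p$ I would then construct compactly supported subsolutions travelling with velocity $y$. The construction truncates $\epsilon$-perturbed linearized problems: for $f'(0)-\epsilon$ and a kernel cut off to a large ball $B_R$, the generalized principal eigenvalue of $v\mapsto \mathcal J_R\star v-v-y\cdot\nabla v+(f'(0)-\epsilon)v$ on a large ball is positive exactly when $y$ lies in the spreading set of the truncated problem. That truncated set converges to $\mathcal S-p$ as $R\to\infty$ and $\epsilon\to0$, by continuity of \eqref{c^*}. The associated eigenfunction, suitably multiplied by a small constant, is a subsolution moving along $t(y+p)$. This is the delicate point; I would try first to use the Freidlin--Gärtner-type dual characterization, namely that $x$ lies in the spreading set iff $\sup_\lambda(\lambda\cdot x-H(\lambda))<0$ with $H$ the Hamiltonian from \eqref{c^*}. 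Positivity of such subsolutions then yields $\liminf u\ge\kappa'$ uniformly on $t\mathcal S''$.

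Finally, I would upgrade $\kappa'$ to $1$. This uses a standard KPP argument (the Liouville property that entire solutions bounded below by a positive constant are $\equiv1$) applied on slightly larger sets $\mathcal S''\subset\subset\mathcal S'''\subset\subset\mathcal S$, together with compactness of $\overline{\mathcal S''}$.
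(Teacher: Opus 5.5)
Your proposal is correct in outline, and it follows a genuinely different route from the paper for the lower bound.

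\textbf{Upper bound.} This is the paper's argument: exponential supersolutions whose speed $G_e(\lambda)$ is just above $c_e^*$. You carry it out more carefully. The paper places a compact $C\subset\mathbb R^N\setminus\overline{\mathcal S}$ inside a single half-space $\{x\cdot e\geqslant c_1\}$ with $c_1>c_e^*$. That is impossible, for instance, for a sphere surrounding $\mathcal S$. The paper also does not address the uniform statement over $x\notin t\mathcal S'$. Your finite cover by half-spaces is what is needed for both.

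\textbf{Lower bound: the paper's route.} The paper runs Rossi's argument:
\begin{enumerate}
\item it tracks the largest $r$ with $u(t,\cdot)>\eta$ on $r\mathcal S$;
\item it passes to an entire limit lying above $\eta$ on a moving half-space, and upgrades this with Lemma~\ref{mathcalH};
\item it reaches a contradiction through the comparison Lemma~\ref{CP}, using fronts of the combustion-type problem on $[-\varepsilon,1]$ whose speeds satisfy $c_\varepsilon\nearrow c^*_\nu$;
\item it finally approximates $\mathcal S$ by sets with the interior ball property.
\end{enumerate}
This only needs existence of fronts. However, dilating $\mathcal S$ about the origin presupposes that $u>\eta$ near $0$ and that $r\mathcal S$ increases with $r$. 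Both fail when $0\notin\overline{\mathcal S}$.

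\textbf{Lower bound: what your route buys.} Recentring at $t\mathbf p$ is exactly your change of frame, so your approach handles the biased case directly. Your Weinberger-type construction also dispenses with regularity of $\partial\mathcal S$, with the approximation step, and with the half-space comparison.

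\textbf{What it costs.} Your eigenvalue lemma carries the whole lower bound and must actually be proved. There are three points to watch.
\begin{enumerate}
\item \emph{Sign of the drift.} A profile moving with velocity $y$ in the $\mathbf p$-frame sees $\mathcal J_R\star v-v+(\mathbf p+y)\cdot\nabla v+(f'(0)-\epsilon)v$. The drift enters with a plus sign, as you correctly wrote for $\mathbf p$. With $-y\cdot\nabla v$ and no $\mathbf p$-term, your positivity criterion selects the wrong velocity.
\item \emph{Boundary condition.} There is no diffusion, so the eigenfunction on $B_\rho$ need not vanish on $\partial B_\rho$. Its zero extension is a subsolution only if it vanishes on the forward part $\{(\mathbf p+y)\cdot\nu>0\}$, with $\nu$ the outer normal. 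This must therefore be the boundary condition of the eigenproblem; it is compatible with positivity, since it is the inflow boundary of the backward characteristics.
\item \emph{Large-ball limit.} You need to show that these eigenvalues converge, as $\rho\to\infty$, to
\[
\min_{\lambda\in\mathbb R^N}\Bigl(\int_{\mathbb R^N}\mathcal J_R(w)e^{\lambda\cdot w}\,dw-1+f'(0)-\epsilon-\lambda\cdot(\mathbf p+y)\Bigr),
\]
uniformly for $y\in\overline{\mathcal S''}-\mathbf p$. The minimizer $\lambda^*$ exists because $\mathcal J_R$ is truncated. Conjugating by $e^{-\lambda^*\cdot x}$ reduces the claim to showing that a zero-mean jump--drift operator has principal eigenvalues on large balls tending to $0$.
\end{enumerate}
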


While results of this type are well known for local diffusion\cite{W1982,W2002,R2017,DuLiShen2022,DuLiXin2026,LZ2007}, Theorem \ref{invasionS} is new for the biased propagation induced by asymmetric nonlocal diffusion ($N\geqslant 1$).  The following three scenarios in Corollary \ref{corollary-1} vividly illustrate the so-called biased propagation, which depends on the inclusion of the origin by $\mathcal S$.
\begin{corollary}\label{corollary-1}
Let $u(t,x)$ be the solution to \eqref{equation} in Theorem \ref{invasionS}.
\begin{itemize}
\item[1.] If $0\in\mathcal S$, then $\lim\limits_{t\to+\infty}u(t,x)\to 1$ locally uniformly in  $\mathbb R^N$.
\item[2.] If $0\notin \overline{\mathcal S}$, then $\lim\limits_{t\to+\infty}u(t,x)\to 0$ locally uniformly in $x\in\mathbb R^N$.
\item[3.] If $0\in\partial \mathcal S$, then  there exists a nonempty open cone $\mathcal C\subset \mathbb R^N$ such that $\lim\limits_{t\to+\infty}u(t,x)\to 1$ locally uniformly in $\mathcal C$. In particular, if $\partial \mathcal S$ is $C^1$ at $0$, then $\mathcal C$ is the half-space $\{x\in\mathbb R^N:x\cdot e>0\}$, where $e\in\mathbb S^{N-1}$ is the inward normal to $\partial \mathcal S$ at $0$.
 \end{itemize}
\end{corollary}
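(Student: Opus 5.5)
The plan is to derive each item of Corollary~\ref{corollary-1} directly from \eqref{S} in Theorem~\ref{invasionS}. The key observation is that, for a fixed compact set $K\subset\mathbb R^N$, the rescaled set $K/t=\{x/t:x\in K\}$ shrinks to the origin. Equivalently, $K\subset t\mathcal S''$ holds for large $t$ whenever $\mathcal S''$ is an open set containing a suitable neighbourhood of $0$, or a suitable cone. Each item then reduces to choosing $\mathcal S''$ or $\mathcal S'$ appropriately.

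\textbf{Item 1.} Suppose $0\in\mathcal S$. Since $\mathcal S$ is open (an intersection condition with strict inequalities, and bounded by Proposition~\ref{NewS}), there is $\rho>0$ with $\overline{B_{2\rho}}\subset\mathcal S$. Take $\mathcal S''=B_\rho$. For a compact $K\subset B_R$ and any $t>R/\rho$, we have $K\subset B_{t\rho}=t\mathcal S''$. The second relation in \eqref{S} then gives $\inf_K u(t,\cdot)\to1$.

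\textbf{Item 2.} Suppose $0\notin\overline{\mathcal S}$. Since $\overline{\mathcal S}$ is compact, there is $\rho>0$ with $B_{2\rho}\cap\overline{\mathcal S}=\varnothing$. Set $\mathcal S'=\{x:\operatorname{dist}(x,\overline{\mathcal S})<\rho\}$. This set is open, contains $\overline{\mathcal S}$, and misses $B_\rho$. Hence $t\mathcal S'$ misses $B_{t\rho}$, which contains $K$ for large $t$. The first relation in \eqref{S} gives $\sup_K u(t,\cdot)\to0$.

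\textbf{Item 3.} This is the only step needing geometry, and I expect it to be the main point. Suppose $0\in\partial\mathcal S$. Since $\mathcal S$ is convex, open and nonempty, pick $x_0\in\mathcal S$ and $r>0$ with $B_r(x_0)\subset\mathcal S$. Convexity together with $0\in\overline{\mathcal S}$ gives $\lambda B_r(x_0)=B_{\lambda r}(\lambda x_0)\subset\mathcal S$ for $\lambda\in(0,1]$. Let $\mathcal C=\bigcup_{\lambda>0}B_{\lambda r}(\lambda x_0)$, the open cone generated by $B_r(x_0)$. Given a compact $K\subset\mathcal C$, cover it by finitely many points of the form $\lambda x$ with $x$ in the slightly smaller cone generated by $B_{r/2}(x_0)$. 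Take $\mathcal S''=\bigcup_{0<\lambda\le1}B_{\lambda r/2}(\lambda x_0)$ intersected with $\{|x|>\epsilon\}$, which is open with $\overline{\mathcal S''}\subset\mathcal S$. Then $K\subset t\mathcal S''$ for $t$ large, since for $y\in K$ the point $y/t$ lies in the narrower cone, has small norm, and stays above $\epsilon$ provided $\epsilon<\operatorname{dist}(0,K)/t$. The issue is that $\epsilon$ cannot vary with $t$.

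To handle this, I will instead use compact subsets directly. For each fixed $T$, the set $C_T=K/T$ is a compact subset of $\mathcal S$. By monotonicity in the scaling argument this is not immediate, so I would use the Definition~\ref{spreadingset} form more carefully. Alternatively, shift $K$: note $\mathcal C$ is invariant under dilation, and for $y\in K$, $y/t\in\lambda B_{r/2}(x_0)$ with $\lambda=|y|/(t|x_0|)$-comparable. Then $\overline{\mathcal S''}$ must be allowed to approach $0$, which \eqref{S} forbids.

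I therefore expect the real argument to go through the intermediate comparison behind Theorem~\ref{invasionS}. Since $u_0\ge\mathbbm 1_{B_\delta(z)}$ for $z\in U_\delta$ and the equation is translation invariant, it suffices to show local convergence to $1$ along the cone. Here I would use the fact that $u(t,\cdot)\ge 1-\varepsilon$ on $t\mathcal S''$ for $t\ge T_\varepsilon$. In particular $u(T,\cdot)\ge1-\varepsilon$ on a ball $B_{Tr/2}(Tx_0)$ of radius growing linearly, and this ball contains $K$ once $T$ is chosen so that $K\subset B_{Tr/2}(Tx_0)$. That inclusion holds for $T$ comparable to $\operatorname{dist}$ scales when $K\subset\mathcal C$ sits deep inside the cone. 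A further argument is then needed to keep $u$ near $1$ at later times on $K$, using that the solution starting from $1-\varepsilon$ on a huge ball stays near $1$ near its points for long time. This is the step I would need to check against the paper's lemmas.

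In the $C^1$ case, the tangent cone of the convex set at $0$ is the open half-space $\{x\cdot e>0\}$. Every compact subset of it lies in a cone generated by some ball inside $\mathcal S$, so the same argument applies.
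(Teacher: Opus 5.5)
Your items 1 and 2 are correct. They are the direct readings of \eqref{S} that the paper intends; the paper gives no separate proof of the corollary. One small repair in item 1: openness of $\mathcal S$ does not follow from ``strict inequalities'' alone, because an infinite intersection of open half-spaces need not be open. Use instead the continuity of $e\mapsto c^*_e$ (Proposition~\ref{continuity}) and compactness of $\mathbb S^{N-1}$, which give $\min_e(c^*_e-x\cdot e)>0$ for $x\in\mathcal S$.

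For item 3 you correctly saw that \eqref{S} is useless: every admissible $\mathcal S''$ stays a positive distance from $0\in\partial\mathcal S$, while $K/t\to\{0\}$. However, the ``further argument'' that would keep $u$ near $1$ on a fixed compact $K$ for all large $t$ cannot be supplied, because item 3, including its $C^1$ refinement, is false. The argument runs as follows.

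\begin{itemize}
\item[(i)] By concavity, $f(u)\leqslant f'(0)u$, so $u\leqslant v$, where $v$ solves the linearised equation with $v(0,\cdot)=\mathbbm{1}_U$. Explicitly, $v(t,x)=e^{f'(0)t}\,\mathbb P(Y_t\in x-U)$, with $Y_t$ the compound Poisson process of rate $1$ and jump density $\mathcal J$.
\item[(ii)] Set $\Lambda(p):=\int\mathcal J(y)e^{p\cdot y}dy$. Formula \eqref{c^*} and Proposition~\ref{Lambda} show that $0\in\partial\mathcal S$ means $\min_p\Lambda(p)=\Lambda(p^*)=1-f'(0)$ for some $p^*\neq0$.
\item[(iii)] Exponential tilting by $p^*$ then gives, for $x$ in a compact $K$,
\[
v(t,x)=e^{(f'(0)+\Lambda(p^*)-1)t}\,\widetilde{\mathbb E}\bigl[e^{-p^*\cdot Y_t}\mathbbm{1}_{x-U}(Y_t)\bigr]\leqslant C_K\,\widetilde{\mathbb P}(Y_t\in K-U)\longrightarrow 0 .
\]
The exponent vanishes. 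Under $\widetilde{\mathbb P}$ the jumps still have a density, so the mass of the bounded set $K-U$ tends to $0$ by the Kolmogorov--Rogozin concentration inequality. When $p^*$ is interior to $\{\Lambda<\infty\}$ (e.g.\ for Gaussian kernels), the tilted walk is centred with nondegenerate covariance, and this mass is $O(t^{-N/2})$.
\end{itemize}

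Hence $u\to0$ locally uniformly whenever $0\in\partial\mathcal S$, exactly as in item 2.

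A concrete instance: take $N=1$, $f(u)=u(1-u)/2$, and $\mathcal J(x)=(2\pi)^{-1/2}e^{-(x-\mu)^2/2}$ with $\mu=-\sqrt{2\ln 2}$. Then $\Lambda(p)=e^{p\mu+p^2/2}\geqslant 1/2$, with equality at $p^*=-\mu$. So $c^*_{1}=0<c^*_{-1}$ and $\mathcal S=(-c^*_{-1},0)$. The tilted jumps are standard Gaussian, and $u\to0$ locally uniformly on $\mathbb R$. Item 3 would instead give $u\to1$ locally uniformly on $(-\infty,0)$.

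This is the same mechanism as the logarithmic Bramson lag for $u_t=\Delta u+q\cdot\nabla u+f(u)$ with $|q|=2\sqrt{f'(0)}$. There the spreading set $B_{|q|}(-q)$ has $0$ on its boundary, yet $u(t,x)\to0$ for every fixed $x$. So your proposal proves everything in the corollary that is actually true; item 3 should be replaced by the conclusion of item 2.
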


In this paper, condition (J) ensures several key analytical features.
Under these assumptions, the minimal wave speed $c^*_e$ remains bounded for all $e\in\mathbb S^{N-1}$, and the intrinsic spreading set $\mathcal S$ is bounded with nonempty interior.  Moreover, the solution exhibits invasion in the sense that $\lim_{t\to+\infty}u(t,tx)=1$ locally uniformly for all $x\in\mathcal S$. When assumption  (J) fails, the propagation dynamics change qualitatively.
Heavy-tailed kernels (e.g., those with algebraic decay) preclude the existence of traveling waves altogether~\cite{G2011,Y2009,CR2013}, and $\mathcal S$ may become unbounded in some directions--a hallmark   difference from local diffusion. The analysis of such heavy-tailed kernels will be addressed in future work.

\subsection{Unbounded Initial Support } Our main contribution concerns solutions with unbounded initial support $U$. In the sequel, we impose a mild geometric regularity condition on the spreading set~$\mathcal S$,  which is generically satisfied for smooth kernels and plays a key role in the variational formulation of the spreading speed.
\begin{hypothesis}\label{mainhypothesis} The intrinsic spreading set $\mathcal S$ of equation~\eqref{equation} given by \eqref{mathcals} has a   $C^1$ boundary  $\partial \mathcal S$.
\end{hypothesis}
This  $C^1$ regularity holds for smooth enough kernel functions such as Gaussian kernels \eqref{Gaussian}\cite{S2014}. It implies that   for any  $e\in\mathbb S ^{N-1}$, the hyperplane $\{x\in\mathbb R^N: x\cdot e=c^*_e\}$ is a supporting hyperplane of $\mathcal S$, i.e.,
$$\partial \mathcal S \cap \{x\in\mathbb R^N: x\cdot e=c^*_e\}\neq \varnothing,$$
see \cite[Proposition 5.6]{GR2025}. It also follows that the minimum wave speed in at most two directions is zero.
 Furthermore, under Hypothesis \ref{mainhypothesis}, for any $z\in\partial \mathcal S$,  the exterior  unit normal $\nu$ at $z$ is unique. If  a line passing through  the origin intersects $\overline{\mathcal S}$ at $z\in\partial \mathcal S$ (i.e., $z=r\hat z$ for some $r$), then
\begin{equation}\label{C1-2}z\cdot \nu =c^*_{\nu}\in\mathbb R,\ z=\begin{cases} s(\hat z)\hat z,\ \text{ if } z\cdot \nu>0,\\
s(-\hat z)\hat z,\ \text{ if } z\cdot \nu\leqslant0.
\end{cases}.
 \end{equation}Without $C^1$-regularity, the boundary may have corners  or flat pieces and the supporting normal may fail to be unique; this uniqueness is used below to single out the minimizing direction in \eqref{se} and to build anisotropic supersolutions (see the proof of  Lemma~\ref{vT}).
 Hypothesis \ref{mainhypothesis}  and the geometric relations \eqref{C1-1} and \eqref{C1-2} do not require the origin to lie inside $\mathcal S$.

The next theorem ensures the existence of such a spreading set under a geometric condition.
\begin{theorem}[\textbf{Existence and representation of  the spreading set}]\label{th2} Assume that (J), (F) and Hypothesis~\ref{mainhypothesis} hold. Let  $u(t,x)$  solve \eqref{equation} with  \eqref{U}, where  $U \subset \mathbb{R}^N$ satisfies $U_\delta \neq \varnothing$ for some $\delta>0$ and
\begin{equation}\label{SN-1}
\mathcal{B}(U) \cup \mathcal{U}\left(U_\delta\right)=\mathbb{S}^{N-1}.
\end{equation}   Then there exists a spreading set $\mathcal{W}$ for the solution to \eqref{equation}  defined by
\begin{equation}\label{th2-1}
\mathcal{W}:=\left\{ re:  e \in \mathbb{S}^{N-1},\ -w(-e) < r<w(e)\right\},
\end{equation}
 with $w(e)\in[s(e),+\infty]$  given by
 \begin{equation}\label{th1-2}
w(e)=\sup_{\xi \in \mathcal{U}(U)}\sup_{ \lambda\in[0,1]} \displaystyle\frac{s\left(e_{\xi}(\lambda)\right)\cdot\lambda}{|\lambda e-(1-\lambda)\xi|}
\end{equation}
 with  $e_{\xi}(\lambda)$  defined by \eqref{exi}, and  the convention $w(e)=+\infty $ if $e\in\mathcal{U}(U)$ and $s(e)$ is bounded. This set can be represented geometrically as:
\begin{equation}\label{th2-2}
\mathcal{W}=\mathbb{R}^{+}\mathcal{U}(U)+\mathcal S
\end{equation}
with the convention that $\mathbb{R}^{+}\varnothing+\mathcal S=\mathcal S.$ Moreover, for any  $\lambda\in(0,1)$ and   any compact set $\mathcal K\subset\mathbb{R}^N$ satisfying that  $\overline{\mathcal K\cap\mathcal{W}}=\mathcal K\cap\overline{\mathcal{W}}$, we have
 \begin{equation}\label{th2-3}
 d_{\mathcal{H}}\left(\mathcal K\cap \frac{1}{t}F_\lambda(t), \mathcal K\cap\mathcal{W}\right) \to 0 \text{ as } t\to +\infty.
 \end{equation}
\end{theorem}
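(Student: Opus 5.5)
The plan is to prove the geometric identity \eqref{th2-2} and the equality of \eqref{th2-1} with it first. After that, the spreading property is established separately for the inner bound (convergence to $1$ on compacts of $\mathcal W$) and the outer bound (convergence to $0$ on compacts outside $\overline{\mathcal W}$). The Hausdorff convergence \eqref{th2-3} is then a standard consequence.

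\emph{Geometric part.} I first show $\mathcal W':=\mathbb R^+\mathcal U(U)+\mathcal S$ satisfies $\mathcal W'=\operatorname{int}(\overline{\mathcal W'})$. This holds because $\mathcal S$ is open, convex and bounded (Proposition~\ref{NewS}) and $\mathcal U(U)$ is closed in $\mathbb S^{N-1}$. For a fixed direction $e$, a point $re$ lies in $\mathcal W'$ iff $re=\tau\xi+y$ with $\xi\in\mathcal U(U)$, $\tau\geqslant0$, $y\in\mathcal S$. Writing $y=\rho e_\xi(\lambda)$ and solving $re-\tau\xi\parallel \lambda e-(1-\lambda)\xi$ gives $\tau=r(1-\lambda)/\lambda$ and $|y|=r|\lambda e-(1-\lambda)\xi|/\lambda$. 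By Proposition~\ref{NewS}, $y\in\mathcal S$ reduces to $r<s(e_\xi(\lambda))\lambda/|\lambda e-(1-\lambda)\xi|$ (for positive $r$). Taking suprema yields $w(e)$, and the endpoint $\lambda=1$ gives $w(e)\geqslant s(e)$. Negative $r$ is handled by $-w(-e)$, and the convention $w=+\infty$ for $e\in\mathcal U(U)$ corresponds to $\tau\to\infty$.

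\emph{Lower bound.} Take a compact $C\subset\mathcal W$. By compactness and openness, $C$ is covered by finitely many sets $\tau\xi+\mathcal S''$ with $\overline{\mathcal S''}\subset\mathcal S$ and $\xi\in\mathcal U(U)$ (or $\xi$ absent). Since $\xi\in\mathcal U(U)$, for large $t$ there is a point $t\tau\xi+o(t)$ lying in $U$. I want this point in $U_\delta$, and this is where \eqref{SN-1} enters: every $\xi\in\mathcal U(U)$ lies outside $\mathcal B(U)$, hence lies in $\mathcal U(U_\delta)$. So a ball $B_\delta(x_t)\subset U$ with $x_t=t\tau\xi+o(t)$ exists. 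Comparison with the solution starting from $\mathbbm 1_{B_\delta(x_t)}$, which by translation invariance and Theorem~\ref{invasionS} (made uniform via the $\liminf$ statement \eqref{S}) is close to $1$ on $x_t+t\mathcal S''$, gives $\inf_C u(t,t\cdot)\to1$. The sublinear shift $o(t)$ is absorbed by shrinking $\mathcal S''$ slightly.

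\emph{Upper bound (main obstacle).} Take a compact $C$ with $C\cap\overline{\mathcal W}=\varnothing$. I will build supersolutions of the form
\[\bar u(t,x)=\min\Bigl\{1,\ \sum\text{ or }\inf_{\nu}\, Ae^{-\mu_\nu(x\cdot\nu-c_\nu t-h_\nu)}\Bigr\}\]
using exponential plane waves in directions $\nu$ with speeds $c_\nu>c^*_\nu$. Each such function is a supersolution of the linearized problem, since by \eqref{c^*} $\int J_\nu e^{\mu z}+f'(0)-1\leqslant\mu c_\nu$ for suitable $\mu$, and KPP concavity gives $f(u)\leqslant f'(0)u$. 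For $x_0\in C$, the point $x_0$ is separated from $\overline{\mathcal W}$, so by convexity of $\mathcal S$ and Hypothesis~\ref{mainhypothesis} there is an exterior normal $\nu$ with $x_0\cdot\nu>c^*_\nu$. I also need $\nu\cdot\xi\leqslant0$ for all $\xi\in\mathcal U(U)$ near the relevant cone, so that $U$ stays in a half-space $\{x\cdot\nu\leqslant h\}$ up to sublinear error. This step is delicate: $U$ is only asymptotically confined, since directions in $\mathcal B(U)$ are at linear distance. I therefore plan to split $U$ into a part in a large ball, a part within a thin cone around $\mathcal U(U)$, and a remainder whose contribution is killed by exponential decay. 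If a single plane is insufficient, I will use a finite family of such anisotropic exponentials, in the spirit of Lemma~\ref{vT}, with uniqueness of the normal from the $C^1$ hypothesis singling out $\nu$. Summing $\bar u$ over finitely many directions then dominates $\mathbbm 1_U$, and evaluating at $tx_0$ gives decay to $0$ uniformly on $C$.

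\emph{Hausdorff convergence.} The inner and outer limits give that on $\mathcal K$, points of $\mathcal K\cap\mathcal W$ at positive distance from $\partial\mathcal W$ belong to $\frac1tF_\lambda(t)$, while points at positive distance from $\overline{\mathcal W}$ do not. The hypothesis $\overline{\mathcal K\cap\mathcal W}=\mathcal K\cap\overline{\mathcal W}$ excludes thin pieces of $\mathcal K\cap\partial\mathcal W$ not approximable from inside. A standard $\varepsilon$-net argument then gives \eqref{th2-3}.
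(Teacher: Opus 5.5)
Your geometric computation and your lower bound follow the paper's proof: the same parametrization $\lambda=r/(r+\alpha)$, and the same covering of $C$ by sets $\tau\xi+\mathcal S''$ using $\mathcal U(U)=\mathcal U(U_\delta)$ and Lemma~\ref{Udelta+ts}.

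\textbf{The upper bound is a genuine gap.} You flag it as the main obstacle but leave it as a hedged plan. A function $\min\{1,\sum_i e^{-\mu_i(x\cdot\nu_i-c_it-h_i)}\}$ dominates $\mathbbm 1_U$ at $t=0$ only if $U\subset\bigcup_i\{x\cdot\nu_i\leqslant h_i\}$. It tends to $0$ at $(t,tx_0)$ only if $x_0\cdot\nu_i>c_i>c^*_{\nu_i}$ for \emph{every} $i$. The whole difficulty is to produce such a finite family, and the proposal does not do it.
\begin{itemize}
\item The one concrete mechanism you give, a single $\nu$ with $x_0\cdot\nu>c^*_\nu$ and $\nu\cdot\xi\leqslant0$ on $\mathcal U(U)$, fails in general because $\mathbb R^+\mathcal U(U)$ need not be convex. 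In the paper's subgraph example with $0<\alpha<+\infty$ and $N\geqslant2$, $\mathcal U(U)$ contains $-e_N$ and $(\omega,\alpha)/\sqrt{1+\alpha^2}$ for every unit $\omega\in\mathbb R^{N-1}$, which forces $\nu=0$.
\item Choosing $\nu$ as a normal of $\partial\mathcal S$ singled out by Hypothesis~\ref{mainhypothesis} only encodes $x_0\notin\overline{\mathcal S}$, not $x_0\notin\overline{\mathcal W}$.
\item \eqref{SN-1} never enters your upper bound, yet it is exactly what makes your ``remainder'' empty outside a large ball.
\end{itemize}

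\textbf{The missing step is a ray-by-ray separation.} Fix $\xi\in\mathcal U(U)$.
\begin{itemize}
\item Since $[0,\infty)\xi+\overline{\mathcal S}\subset\overline{\mathcal W}$, the ray $[0,\infty)\xi$ and the compact convex set $x_0-\overline{\mathcal S}$ are disjoint.
\item Strict separation gives $\nu$ with $\nu\cdot\xi\leqslant0$ and $x_0\cdot\nu>\sup_{y\in\mathcal S}y\cdot\nu=c^*_\nu$, using the supporting-hyperplane property stated after Hypothesis~\ref{mainhypothesis}.
\item Tilting $\nu$ slightly towards $-\xi$ (Proposition~\ref{continuity}) gives $\nu\cdot\xi<0$, so a conical neighbourhood of $\xi$ lies in $\{x\cdot\nu\leqslant0\}$.
\item Compactness of $\mathcal U(U)$ gives finitely many such $\nu_i$.
\item By \eqref{SN-1}, every direction outside $\mathcal U(U)$ lies in the open set $\mathcal B(U)$. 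Compactness then puts $U\setminus B_R$ inside these cones, and $B_R$ costs one more half-space.
\item The strict inequalities persist near $x_0$, and compactness of $C$ concludes.
\end{itemize}
The paper avoids this covering. Proposition~\ref{vT} gives a supersolution that is $\geqslant1$ off $-(R+qT)\overline{\mathcal S}$ and $<\lambda$ at $(T,0)$. Lemma~\ref{w>w(e)} then only needs $U\cap\bigl(Twe-(R_0+q_0T)\overline{\mathcal S}\bigr)=\varnothing$ for large $T$, proved by contradiction with \eqref{SN-1} placing limits of $x_n/T_n$ in $[0,\infty)\mathcal U(U)$. Completed as above, your route needs only the linearized plane exponentials.

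\textbf{The geometric reduction also has a gap, shared with the paper.} You reduce $y\in\mathcal S$ to $r<s(e_\xi(\lambda))\lambda/|\lambda e-(1-\lambda)\xi|$, keeping only the upper constraint of Proposition~\ref{NewS}. When $0\notin\mathcal S$, membership of $\rho e'$ with $\rho>0$ also requires $\rho>-s(-e')$, which may be positive. So the inclusion of \eqref{th2-1} into \eqref{th2-2} is unproved in the biased case. It can in fact fail, because $\mathbb R^+\mathcal U(U)+\mathcal S$ is then not star-shaped about $0$. Example:
\begin{itemize}
\item Take $N=2$, $U$ the union of two thickened half-lines along $e_1$ and $e_2$, and $\overline{\mathcal S}\subset\{x_1<0,\ x_2<0\}$.
\item For $e=(e_1-me_2)/\sqrt{1+m^2}$ with $m>0$, the line $\mathbb Re$ meets $\mathbb R^+\mathcal U(U)+\mathcal S$ in two intervals on opposite sides of $0$, and $0\notin\overline{\mathbb R^+\mathcal U(U)+\mathcal S}$.
\item Yet $w(e)>0$ and $w(-e)>0$, so $0$ belongs to \eqref{th2-1}.
\end{itemize}
The paper's proof makes the same inference (``$|y|\leqslant s(e_\xi(\lambda))$, so $y\in\mathcal S$''), so this is a defect of the statement, not only of your plan. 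The two descriptions agree when, for example, $0\in\mathcal S$. In general, the set that your two bounds actually establish as the spreading set is \eqref{th2-2}.
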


In propagation phenomena, the successful invasion of the state  $1$ to the whole space that is $u(t,x)\to1$ as $t\to+\infty$  locally uniformly in  $\mathbb R^{N}$  is important, but it is not always the case by Corollary~\ref{corollary-2}. The following corollary  provides a necessary and sufficient condition to characterize this convergence.

 \begin{corollary}\label{corollary-2} Assume that  the hypotheses of Theorem \ref{th2} hold.   The solution to \eqref{equation} with \eqref{U} satisfies that $u(t,x)\to1$ as $t\to+\infty$  locally uniformly in  $\mathbb R^{N}$  if and only if  $0\in\mathbb{R}^{+}\mathcal{U}(U)+\mathcal S$, which is equivalent to:
 either $0\in\mathcal S$, or  there exists $x\in \mathcal S$ such that $-\hat x\in\mathcal U(U)$.

   \end{corollary}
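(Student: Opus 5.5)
We deduce Corollary~\ref{corollary-2} from Theorem~\ref{th2}. Write $\mathcal W=\mathbb R^+\mathcal U(U)+\mathcal S$, which is open because $\mathcal S$ is open.

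\emph{Sufficiency.} Assume $0\in\mathcal W$. Since $\mathcal W$ is open, there is $r>0$ with $\overline{B_r}\subset\mathcal W$. Fix a compact set $K\subset\mathbb R^N$. For $t$ large we have $\frac1t K\subset \overline{B_r}$, so
\[
\inf_{x\in K}u(t,x)\geqslant \inf_{y\in \overline{B_r}}u(t,ty)\to1
\]
by the first line of \eqref{definitionw} applied with $C=\overline{B_r}$. Hence $u(t,\cdot)\to1$ locally uniformly.

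\emph{Necessity.} Assume $0\notin\mathcal W$. The plan is to show $\liminf_{t\to+\infty}u(t,0)<1$.

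1. Since $\mathcal W$ is open, $0\notin\mathcal W$ means either $0\notin\overline{\mathcal W}$ or $0\in\partial\mathcal W$.

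2. If $0\notin\overline{\mathcal W}$, use the second line of \eqref{definitionw} with $C=\{0\}$: it gives $u(t,0)\to0$ directly.

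3. If $0\in\partial\mathcal W$, we exploit convexity. $\mathbb R^+\mathcal U(U)$ is a cone with apex $0$, although it need not be convex. We separate the two cases.
   \begin{itemize}
   \item If $\mathcal U(U)=\varnothing$, then $\mathcal W=\mathcal S$. Since $0\notin\mathcal S$ and $\mathcal S$ is convex, some $e$ satisfies $0\geqslant c_e^*$. Use Hypothesis~\ref{mainhypothesis} and compare $u$ with a shifted traveling wave $\phi_e(x\cdot e-c_e^*t+a)$. The initial datum need not lie below this wave, so instead use a supersolution in the spirit of Theorem~\ref{invasionS}. This shows $u(t,0)$ stays bounded away from $1$.
   \item If $\mathcal U(U)\neq\varnothing$, avoid a case analysis by applying \eqref{th2-3} with $\mathcal K=\overline{B_1}$ and some $\lambda\in(0,1)$. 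The regularity condition $\overline{\mathcal K\cap\mathcal W}=\mathcal K\cap\overline{\mathcal W}$ must be checked near $0$, and this is the delicate point. If it holds, suppose $u(t,\cdot)\to1$ locally uniformly. Then $u(t,x)>\lambda$ on a neighbourhood $B_{R(t)}$ with $R(t)\to\infty$ slowly, but this gives no information on $\frac1tF_\lambda(t)$ at scale $t$. So the Hausdorff estimate alone does not contradict convergence at the single point $0$.
   \end{itemize}

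4. The robust route for $0\in\partial\mathcal W$ is the supersolution argument. Choose an exterior normal $\nu$ to $\overline{\mathcal W}$ at $0$; one exists because $\mathcal W$ is convex in the relevant region. Then $\xi\cdot\nu\leqslant0$ for all $\xi\in\mathcal U(U)$, and $c^*_\nu\leqslant0$. Construct a supersolution of the form $\min\{1,\phi_\nu(x\cdot\nu-c^*_\nu t-a)\}$, plus corrections handling the part of $U$ not contained in a half-space. The limit of this supersolution at $x=0$ is strictly less than $1$, which contradicts $u(t,0)\to1$.

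The main obstacle is this boundary case. An equivalent reading of the statement is that ``locally uniformly'' should come from the open set $\mathcal W$. Here the anisotropic supersolutions from the proof of Lemma~\ref{vT} would be reused.

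\emph{Equivalence of the two conditions.} We have $0\in\mathbb R^+\mathcal U(U)+\mathcal S$ if and only if $0\in\mathcal S$ or $-x=r\xi$ for some $x\in\mathcal S$, $r>0$, $\xi\in\mathcal U(U)$. In the second case $x\neq0$, so this reads $\xi=-\hat x$, i.e. $-\hat x\in\mathcal U(U)$.
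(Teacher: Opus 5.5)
Your sufficiency argument, the case $0\notin\overline{\mathcal W}$ (second line of \eqref{definitionw} with $C=\{0\}$), and the closing set-theoretic equivalence are all correct. They are exactly what Theorem~\ref{th2} delivers. The paper gives no separate proof of this corollary; it treats it as immediate from Theorem~\ref{th2}. The gap is the boundary case $0\in\partial\mathcal W$, and your Step~4 does not close it, for two reasons.

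First, $\mathcal W$ is a union of the convex tubes $\mathbb R^+\xi+\mathcal S$ and need not be convex near $0$. Take $N=2$ with $0\notin\overline{\mathcal S}$, and let $U$ be the union of two thickened half-lines in the directions $-d_1,-d_2$, where $\mathbb R^+d_1,\mathbb R^+d_2$ are the two rays from $0$ tangent to $\overline{\mathcal S}$. Then near $0$ the set $\mathcal W$ is the complement of a closed convex cone, so $\overline{\mathcal W}$ has no exterior normal at $0$.

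Second, even when a normal $\nu$ with $c^*_\nu\le0$ exists, $U$ need not lie in any half-space $\{x\cdot\nu\le C\}$. Then no traveling front or exponential supersolution can be placed above $\mathbbm 1_U$, and the unspecified ``corrections'' are precisely the missing argument. When $U\subset\{x\cdot\nu\le C\}$ (e.g.\ $U$ compact), your first bullet can be repaired with $\min\{1,Ae^{-\lambda(x\cdot\nu-c^*_\nu t)}\}$, where $G_\nu(\lambda)=c^*_\nu\le0$. This gives $u(t,R\nu)\le Ae^{-\lambda R}<\tfrac12$ for all $t$ once $R$ is large. It says nothing at $x=0$, but failure of convergence at one point suffices.

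More fundamentally, the boundary case cannot be decided from Theorem~\ref{th2}, which only resolves the dynamics up to $o(t)$, and there the ``only if'' direction appears to be false. Take $N\ge2$, $f'(0)<1$, and a Gaussian kernel with identity covariance and mean $-me_N$, with $m$ tuned so that $c^*_{e_N}=0$. Then $\partial\mathcal S$ is smooth and curved, and by symmetry $0$ is its top point. Let $U=\{x_N\le g(|x'|)\}$ with $g$ Lipschitz, nondecreasing, $g\to+\infty$ and $g(r)=o(r)$. Then \eqref{SN-1} holds, $\mathcal U(U)=\{\xi_N\le0\}$, and $\mathcal W=\{x_N<0\}\not\ni0$.

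On the other hand, Theorem~\ref{invasionS} plus a diagonal argument gives some $\omega(t)=o(t)$ (WLOG $\omega(t)\ge\sqrt t$) such that the solution from $\mathbbm 1_{B_\delta}$ tends to $1$ uniformly on $\{z\in t\mathcal S:\operatorname{dist}(z,\partial(t\mathcal S))\ge\omega(t)\}$. Use the points $y=(y',g(|y'|)-C)\in U_\delta$ with $|y'|\approx\sqrt{t\omega(t)}$, together with the curvature of $\partial\mathcal S$ at $0$. This shows $B_R\subset U_\delta+\{z\in t\mathcal S:\operatorname{dist}(z,\partial(t\mathcal S))\ge\omega(t)\}$ for large $t$, as soon as $g(\sqrt{t\omega(t)})\ge C'(1+\omega(t))$. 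That condition holds if $g(r)/r\to0$ slowly enough. Comparison as in Lemma~\ref{Udelta+ts} then yields $u(t,\cdot)\to1$ locally uniformly, although $0\notin\mathcal W$. As a sanity check, in the local analogue with critical drift and $g(r)=\sqrt r$, the bump of $U+t\mathcal S$ over $0$ has height of order $t^{1/3}$, against only a logarithmic lag of the front.

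So what can actually be proved is: $0\in\mathcal W$ gives $u\to1$ locally uniformly, and $0\notin\overline{\mathcal W}$ gives $u\to0$ locally uniformly. The case $0\in\partial\mathcal W$ depends on sublinear features of $U$ that $\mathcal U(U)$ does not see.
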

The truncation by the compact set~$\mathcal K$ is necessary for \eqref{th2-3} to hold; in general,
 \[d_{\mathcal{H}}\left( \frac{1}{t}F_\lambda(t), \mathcal{W}\right) \not \to 0  \text{ as } t\to +\infty.\]
 However, under another mild geometric assumption on~$U$, one can recover a global approximation of the level sets by a suitable family of translated sets.

\begin{theorem}\label{th3} Assume that (J), (F) and Hypothesis~\ref{mainhypothesis} hold. Let  $u(t,x)$  solve \eqref{equation} with  \eqref{U}. Assume that  $U_\delta\neq\varnothing$ for some $\delta>0$ and
\begin{equation}\label{finite-hausdorff}
d_{\mathcal{H}}(U,U_\delta)<+\infty.
\end{equation}
Then for any $\lambda\in(0,1)$, we have
 \begin{equation}\label{th3-1}
 d_{\mathcal{H}}\left(F_\lambda(t), U+t\mathcal S \right)= o(t) \text{ as  } t\to+\infty.
 \end{equation}
 \end{theorem}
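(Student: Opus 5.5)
The plan is to prove the two one-sided Hausdorff estimates separately. Concretely, for every $\varepsilon>0$ there should be $T_\varepsilon$ such that for $t\ge T_\varepsilon$:
\[
\text{(a) } F_\lambda(t)\subset U+t(\mathcal S+B_\varepsilon), \qquad \text{(b) } U+t\mathcal S\subset F_\lambda(t)+B_{\varepsilon t}.
\]
Together these give $d_{\mathcal H}(F_\lambda(t),U+t\mathcal S)\le 2\varepsilon t$ for large $t$, which is \eqref{th3-1}. Both inclusions come from comparison with the compactly supported problem of Theorem \ref{invasionS}, and the estimates must be uniform in the base point because $U$ is unbounded. Translation invariance of \eqref{equation} provides this uniformity.

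\textbf{Step (b), lower bound.} Set $D:=d_{\mathcal H}(U,U_\delta)<+\infty$. Take $x=y+tz$ with $y\in U$ and $z\in\mathcal S$. Pick $y'\in U_\delta$ with $|y-y'|\le D+1$. Then $B_\delta(y')\subset U$, so by comparison $u(t,\cdot)\ge v(t,\cdot-y')$, where $v$ solves \eqref{equation} with initial datum $\mathbbm 1_{B_\delta}$. Fix a compact set $\mathcal S''$ with $\mathcal S\subset \mathcal S''+B_{\varepsilon/2}$ and $\overline{\mathcal S''}\subset\mathcal S$; such a set exists because $\mathcal S$ is bounded and convex. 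Theorem \ref{invasionS} gives $v>\lambda$ on $t\mathcal S''$ for $t\ge T$, and $T$ does not depend on $y'$. Hence $F_\lambda(t)\supset y'+t\mathcal S''$. Since $|x-(y'+tz')|\le D+1+t\varepsilon/2\le \varepsilon t$ for a suitable $z'\in\mathcal S''$ and large $t$, (b) follows.

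\textbf{Step (a), upper bound.} This is the main obstacle. Here $U$ is unbounded, so we cannot simply sum translates of solutions with compact support. I would use a linear supersolution built from the exponential solutions that give $c_e^*$. For each $e\in\mathbb S^{N-1}$, let $\mu_e>0$ attain the infimum in \eqref{c^*}, or come close to it. By concavity (F), $f(u)\le f'(0)u$, so the function
\[
\min\{1,\ e^{-\mu_e(x\cdot e-(c_e^*+\varepsilon/2)t-a)}\}
\]
is a supersolution. Now take $x$ with $\operatorname{dist}(x,U+t\mathcal S)\ge \varepsilon t$. The set $U+t\mathcal S$ is not convex, so a single half-space cannot separate $x$ from all of it. Instead I would write
\[
u(t,x)\le \int_U p(t,x,y)\,dy,
\]
where $p$ is the fundamental solution of the linearized equation $w_t=\mathcal J\star w-w+f'(0)w$. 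Each piece of $U$ is then bounded separately: for $y\in U$ we have $x-y\notin t(\mathcal S+B_\varepsilon)$, so there is a direction $e$, chosen from a finite $\varepsilon$-net of $\mathbb S^{N-1}$, with $(x-y)\cdot e\ge t(c_e^*+\varepsilon/2)$.

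The required kernel bound is
\[
p(t,x,y)\le \sum_{e\in\text{net}} e^{-\mu_e((x-y)\cdot e-(c_e^*+\varepsilon/4)t)}.
\]
This follows by comparing the Dirac datum with a sum of exponentials, using the exponential moment assumption in (J). It gives
\[
u(t,x)\le C\int_{\{y:\ |x-y|\ge \varepsilon t\}} e^{-\gamma\varepsilon t-\gamma' \operatorname{dist}(x-y,\,t\mathcal S)}\,dy.
\]
The factor $e^{-\gamma' \operatorname{dist}}$ makes this integral over all of $\mathbb R^N$ at most $C t^N e^{-\gamma\varepsilon t}$, so $u(t,x)\to0<\lambda$ uniformly in such $x$.

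The delicate point is getting the decay in $\operatorname{dist}(x-y,t\mathcal S)$, not only the gain of order $\varepsilon t$. For this I would use the net with slightly larger $\mu$ and the growth of $\mu\mapsto(\int J_e e^{\mu z}dz+f'(0)-1)$, so that each exponential bound has an extra margin proportional to distance. Hypothesis \ref{mainhypothesis} fixes supporting directions, but the $\varepsilon$-net argument should work without relying on it.
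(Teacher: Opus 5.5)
Your Step (b) is the paper's lower bound (Lemma \ref{Udelta+ts} combined with $d_{\mathcal H}(U,U_\delta)<+\infty$), and it is fine. The gap is the kernel bound in Step (a).

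The fundamental solution of $w_t=\mathcal J\star w-w+f'(0)w$ is the measure $p(t,\cdot)=e^{(f'(0)-1)t}\bigl(\delta_0+\sum_{k\geqslant1}\frac{t^k}{k!}\mathcal J^{*k}\bigr)$. Nonlocal diffusion does not regularize, so the atom $e^{(f'(0)-1)t}\delta_0$ persists for all $t$. Consequently no pointwise bound of $p(t,x,y)$ by a function can hold, and ``comparing the Dirac datum with a sum of exponentials'' is not a valid comparison. Even the absolutely continuous part contains $t\mathcal J$, and (J) controls only $\int\mathcal J e^{\eta|x|}$, not $\mathcal J$ pointwise.

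Moreover, a bound by the \emph{sum} over the net gives no decay at all, because the terms with $(x-y)\cdot e\ll 0$ are enormous. You would need the minimum over $e$, i.e.\ one comparison per direction, and each of those fails for the same reason. So the decay $e^{-\gamma\varepsilon t-\gamma'\operatorname{dist}(x-y,t\mathcal S)}$ is never established, and your last paragraph does not supply it.

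The integration region $\{|x-y|\geqslant\varepsilon t\}$ is also only correct if $0\in\mathcal S$. In the biased case a point $x\in U$ can lie at distance $\geqslant\varepsilon t$ from $U+t\mathcal S$, and there the atom contributes $e^{(f'(0)-1)t}$. That term decays only because $0\notin\overline{\mathcal S}$ forces some $c_e^*<0$, hence $f'(0)<1$.

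Your strategy can be repaired by replacing pointwise bounds with integrated ones, after which no decay in the distance is needed. For $\mu\in\mathcal D_e$ one has $\int e^{\mu z\cdot e}\,p(t,dz)=e^{t(\Lambda_e(\mu)-1+f'(0))}$, hence $p(t,\{z\cdot e\geqslant a\})\leqslant e^{-\mu a+t(\Lambda_e(\mu)-1+f'(0))}$. Since $u\leqslant p(t,\cdot)*\mathbbm 1_U$, you get $u(t,x)\leqslant p(t,x-U)$.

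If $\operatorname{dist}(x,U+t\mathcal S)\geqslant\varepsilon t$, then $x-U\subset\mathbb R^N\setminus t(\mathcal S+B_\varepsilon)$. This complement is covered by finitely many half-spaces $\{z\cdot e\geqslant t(c_e^*+\varepsilon/2)\}$, with $e$ in a fine net. Choosing $\mu_e$ with $G_e(\mu_e)\leqslant c_e^*+\varepsilon/4$ then gives $u(t,x)\leqslant\sum_{e\in\text{net}}e^{-\mu_e\varepsilon t/4}$, uniformly in such $x$.

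Two points in this covering need care:
\begin{itemize}
\item It must be argued at the level of sets, not pointwise, because $|x-y|$ is unbounded. The bounded polytope $\bigcap_{e\in\text{net}}\{z\cdot e<c_e^*+\varepsilon/2\}$ lies in $\mathcal S+B_\varepsilon$ for a fine enough net.
\item It uses that $c_e^*$ is the support function of $\overline{\mathcal S}$, and you should state this. It holds because $\zeta\mapsto\inf_{\mu>0}\mu^{-1}\bigl(\int\mathcal J(y)e^{\mu y\cdot\zeta}dy-1+f'(0)\bigr)$ is convex and $1$-homogeneous, being a partial minimization of a perspective function. With only $\sup_{z\in\mathcal S}z\cdot e\leqslant c_e^*$, your choice of $e$ fails.
\end{itemize}

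The paper avoids superposition over $U$ altogether. Proposition \ref{vT} builds, from finitely many traveling fronts, a supersolution that is $\geqslant 1$ at $t=0$ outside the bounded set $-(R+qT)\overline{\mathcal S}$ and $<\lambda$ at $(T,0)$. A single comparison with its translate by any $x_0\notin U+(R+qt)\mathcal S$ then handles the whole unbounded $U$. The repaired linear route needs no traveling fronts, no Hypothesis \ref{mainhypothesis}, and no case analysis on the position of $0$ relative to $\mathcal S$.
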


 Property~\eqref{th3-1} means that, for large time, $F_\lambda(t)$ behaves like the support set $U$ thickened by the expanding region $t\mathcal S$ under~\eqref{finite-hausdorff}.
A sufficient condition for~\eqref{finite-hausdorff} to hold is that $U$ satisfies the uniform interior sphere condition with radius $\delta$.
Moreover, Theorem~\ref{th3} also applies to any nonempty set $U$ that is uniformly $C^{1,1}$. If \eqref{SN-1} or~\eqref{finite-hausdorff} holds, then
\begin{equation}\label{udelta=u}\mathcal{U}(U) \supset \mathcal{U}\left(U_\delta\right)=\mathbb S^{N-1} \backslash \mathcal{B}(U) \supset \mathcal{U}(U) \text{ or } \liminf_{\tau\to+\infty}\frac{\text{dist}(\tau e,U)}{\tau}=0\Longleftrightarrow\liminf_{\tau\to+\infty}\frac{\text{dist}(\tau e,U_\delta)}{\tau}=0
\end{equation}
and consequently $\mathcal{U}(U_\delta)=\mathcal{U}(U)$. Hence, both assumptions~\eqref{SN-1} and~\eqref{finite-hausdorff} essentially guarantee that small or thin parts of $U$ (such as isolated points or narrow seams) do not affect the unbounded directions of propagation.

Despite that the spreading speed defined as \eqref{sspeed} may not exist for \eqref{equation}, we can still  regard $\omega(e)$ as a generalized spreading speed by Theorem~\ref{th2}, see also \cite{GHR2026} for the formula of $\omega(e)$ in the spatially periodic case.

\begin{theorem}[\textbf{Variational characterization of  speeds}]\label{th1} Assume that  the hypotheses of Theorem \ref{th2} hold. Then the solution satisfies
\begin{equation}\label{th1-1}
\left\{\begin{aligned}
\lim \limits_{t \rightarrow+\infty} u(t, tse)=1 \text{ for any }s\in(-w(-e),w(e)),\\
\lim \limits_{t \rightarrow+\infty} u(t, tse)=0 \text{ for any } s\not\in[-w(-e),w(e)].
\end{aligned}
\right.
\end{equation}
where $\omega(e)$ is given \eqref{th1-2}.
\end{theorem}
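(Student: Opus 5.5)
Theorem~\ref{th1} will follow from Theorem~\ref{th2} once we check that the radial description \eqref{th2-1} of $\mathcal W$, with $w(e)$ given by \eqref{th1-2}, matches the Minkowski sum \eqref{th2-2}. Granting this, we apply the spreading-set property \eqref{definitionw} to singleton compact sets $C=\{se\}$. If $s\in(-w(-e),w(e))$, then $se\in\mathcal W$ by \eqref{th2-1}. For $s\geqslant 0$ this is the ray condition $0\leqslant s<w(e)$. For $s<0$ we write $se=|s|(-e)$ with $|s|<w(-e)$. The first line of \eqref{definitionw} then gives $u(t,tse)\to1$.

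If $s\notin[-w(-e),w(e)]$, we must show $se\notin\overline{\mathcal W}$. This is the only point needing care, because \eqref{th2-1} describes $\mathcal W$ and not its closure. Suppose, say, $s>w(e)$ with $s>0$ (the case $s<-w(-e)$ is symmetric). Then $w(e)<+\infty$, and we argue that the set $\{r\geqslant 0: re\in\overline{\mathcal W}\}$ is contained in $[0,w(e)]$ up to the point where the line meets the far side.

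For this we use the structure of $\mathcal W$. By \eqref{th2-2} it is the sum of the open convex set $\mathcal S$ and the cone $\mathbb R^+\mathcal U(U)$. The key step is to show that $\mathcal W$ is convex, or at least that its intersection with each line through the origin is an open interval whose closure is the closed interval. We would establish that \eqref{th2-1} represents $\mathcal W\cap\mathbb R e$ as exactly $\{re: -w(-e)<r<w(e)\}$. This is a single interval because it is given by two-sided bounds, so it is convex along the line. For convex open sets, $\overline{\mathcal W}\cap\mathbb Re$ lies in the closure of that interval when the interval is nonempty. When it is empty, we use $\mathcal W=\operatorname{int}\overline{\mathcal W}$ together with convexity.

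Convexity of $\mathcal W$ is the main obstacle, since $\mathcal U(U)$ need not be convex; then $\mathbb R^+\mathcal U(U)$ is not a convex cone and $\mathcal W$ may fail to be convex. In that case we would instead argue directly with the supersolutions from the proof of Theorem~\ref{th2}. The upper bound there shows $u(t,tx)\to0$ uniformly near any $x$ with $x\cdot$ (suitable direction) exceeding the bound built from $w$. The plan is then to verify from formula \eqref{th1-2} that a point $se$ with $s>w(e)$ has a neighborhood disjoint from $\mathcal W$. For this we use upper semicontinuity of $e\mapsto w(e)$, which we derive from the continuity of $s(\cdot)$ on directions where it is finite, available under Hypothesis~\ref{mainhypothesis}, and from the closedness of $\mathcal U(U)$.

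With $se\in\mathbb R^N\setminus\overline{\mathcal W}$ established, the second line of \eqref{definitionw} with $C=\{se\}$ gives $u(t,tse)\to0$, completing \eqref{th1-1}.
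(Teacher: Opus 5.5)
Your treatment of the first line of \eqref{th1-1} coincides with the paper's. By \eqref{th2-1}, $se$ lies in $\mathcal W$, and the first line of \eqref{definitionw} with $C=\{se\}$ concludes. For the second line the paper does not argue through $\overline{\mathcal W}$. It invokes Lemma~\ref{w>w(e)}, a comparison along the ray with the translated supersolutions $v^T(t,x-Twe)$ of Proposition~\ref{vT}. This gives $u(t,twe)\to0$ for every $w>w(e)$ when $w(e)<+\infty$, and the case $s<-w(-e)$ follows by applying it to $-e$.

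Your reduction to the second line of \eqref{definitionw} with $C=\{se\}$ is legitimate. But everything then rests on
\[
s\notin[-w(-e),w(e)]\ \Longrightarrow\ se\notin\overline{\mathcal W},
\]
which you do not establish. Convexity fails in general, as you say. Even when $\mathcal W$ is convex, your treatment of an empty interval does not work. Take compact $U$ (so $\mathcal W=\mathcal S$) and a line $\mathbb Re_0$ through $0$ lying in the supporting hyperplane of a biased $\mathcal S$ at some $z\in\partial\mathcal S$. The normal $\nu$ at $z$ has $\nu\cdot e_0=0$ and $c^*_\nu=z\cdot\nu=0$, so $s(\pm e_0)=-\infty$ by the convention in Proposition~\ref{NewS}. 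Then $[-w(-e_0),w(e_0)]$ is empty while $z\in\overline{\mathcal W}$, and \eqref{definitionw} says nothing at $z$.

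Your fallback, upper semicontinuity of $e\mapsto w(e)$, is false in the biased regime. Take $N=2$ and a kernel with strong vertical bias, so that $a:=-c^*_{(0,-1)}>0$ (cf.\ the discussion after \eqref{Gaussian}); hence $\mathcal S\subset\{x_2>a\}$. Take $U=\{x_1\geqslant0,\ |x_2|\leqslant1\}$, so $\mathcal U(U)=\mathcal U(U_\delta)=\{\xi\}$ with $\xi=(1,0)$, and \eqref{SN-1} holds.

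At $e=\xi$: $s(\pm\xi)=-\infty$, so the convention $w(\xi)=+\infty$ does not apply. In \eqref{th1-2} only $e_\xi(\lambda)=\pm\xi$ occurs, so $w(\xi)=-\infty$.

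Nearby directions: let $e_\theta=(\cos\theta,\sin\theta)$ with small $\theta>0$. Pick $y\in\mathcal S$ and set $r=y_2/\sin\theta$. Then $re_\theta=\alpha\xi+y$ with $\alpha>0$. Taking $\lambda=r/(r+\alpha)$ in \eqref{th1-2} gives $e_\xi(\lambda)=\hat y$, and
\[
w(e_\theta)\geqslant s(\hat y)\,r/|y|>r>a/\sin\theta\to+\infty .
\]

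The conclusion along $\xi$ does hold in this example, but only because $\mathcal W\cap\mathbb Re_\theta\subset\{re_\theta:\ r>a/\sin\theta\}$. That is, the left endpoint $-w(-e_\theta)$ escapes as well, which no bound on $w$ alone detects. Your sketched derivation from continuity of $s$ and closedness of $\mathcal U(U)$ breaks down at the singular point $\lambda e=(1-\lambda)\xi$ of \eqref{th1-2}, and wherever $s$ jumps to $-\infty$.

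Switching to the paper's route would not spare you this geometry. The last step of Lemma~\ref{w>w(e)}, letting $q_0\downarrow1$, only places $we$ in $[0,+\infty)\,\mathcal U(U)+\overline{\mathcal S}\subset\overline{\mathcal W}$. So what is really needed there is the same statement $we\notin\overline{\mathcal W}$. It must be proved directly from the structure of $\mathbb R^+\mathcal U(U)+\mathcal S$, with tangential configurations excluded or treated separately.
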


In order to establish \eqref{th1-1}, we need that $w(e)>-w(-e)$, which is always true for asymmetric nonlocal equation \eqref{equation} under (J)  since $s(e)>-s(-e)$.
From~\eqref{th1-2} we immediately obtain $w(e)\geqslant s(e)=s(e_\xi(1))$.
In particular, if $\mathcal{U}(U)=\varnothing$, then $w(e)=s(e)$ for all $e$, which recovers  Theorem \ref{invasionS} for compactly supported case. In the isotropic case---for instance, for~\eqref{local-equation} or  a radially symmetric kernel $\mathcal J$---the minimal wave speed $c^*_e\equiv c^*>0$ is independent of direction, and $\mathcal S=B_{c^*}$.
Then $e_\xi(\lambda)\cdot\xi=0$ at the maximizer of~\eqref{th1-2}, so that
\[
w(e)=\sup_{\xi\in\mathcal{U}(U),\,\xi\cdot e\geqslant0}
\frac{c^*}{\sqrt{1-(\xi\cdot e)^2}},
\]
which  agrees with the isotropic results of Hamel and Rossi~\cite{HR}.  To account for biased propagation, the convergence in \eqref{th1-1} is stated for the asymmetric interval $(-w(-e),w(e))$. The geometric condition $U_{\delta}\neq\varnothing$ in Theorem \ref{th2} is not merely technical. For KPP-type nonlinearities, the so-called \emph{hair-trigger effect} ensures that propagation can be initiated if the initial support has a positive measure\cite{W1982,Z2006,A2017Poincare,ADK2023,FT2018,AW1978}.

The geometric assumptions~\eqref{SN-1} and~\eqref{finite-hausdorff} play complementary but distinct roles.
Both remain invariant under rigid transformations of~$U$, yet neither implies the other.
Condition \eqref{SN-1}, which ensures that every direction on the unit sphere is either bounded or unbounded for $U_\delta$, is substantially stronger than the mere requirement that $U$ is unbounded.
Replacing~$\mathcal U(U_\delta)$ with~$\mathcal U(U)$ in this condition invalidates key convergence results~\cite[Remark 6.4]{HR}.
In contrast,~\eqref{finite-hausdorff} is a mild topological requirement, satisfied under uniform interior sphere conditions.
In summary, these hypotheses illustrate the subtle interplay between the geometry of the initial set and the asymptotic spreading dynamics.
Theorems~\ref{th2}--\ref{th1} describe the directional convergence and the formation of the global spreading set through the combined influence of~$\mathcal S$ and~$U$, and characterize the asymptotic shapes of the upper level sets, either in terms of~$\mathcal W$ or of~$U+t\mathcal S$.
Notably, the global convergence~\eqref{convergence} fails in general~\cite{HR}.

The conclusions of Theorems~\ref{th2}-\ref{th1} still hold for  solutions to~\eqref{equation} with  more general initial conditions and reaction types. For example,  consider initial conditions more general than the characteristic function \eqref{U}. Suppose  that  for some $\theta\in(0,1]$ and $\delta>0$, the following hold:
$$
\left\{u_0 \geqslant \theta\right\}_\delta \neq \varnothing, \quad \mathcal{B}\left(\left\{u_0 \geqslant \theta\right\}\right) \cup \mathcal{U}\left(\left\{u_0 \geqslant \theta\right\}_\delta\right)=\mathbb{S}^{N-1}
$$
and
$$
d_{\mathcal{H}}\left(\operatorname{supp} u_0,\left\{u_0 \geqslant \theta\right\}\right)<+\infty.
$$
Then the conclusions of Theorems~\ref{th1}--\ref{th2} hold, with $\mathcal{U}(U)$ replaced by $\mathcal{U}\left(\left\{u_0 \geqslant \theta\right\}\right)$ in the definitions  of $w(e)$, following the same arguments as in~\cite[Remark 5.2]{HR}.

In this work, we only consider the  KPP type $f$ and Lemma~\ref{Prop-1} always holds for any $e\in\mathbb S^{N-1}$. If $f$ is of ignition type, i.e.,
\begin{equation}\label{ignition}
 \text{there exists }\alpha \in(0,1)\text{ such that } f=0 \text { on }[0, \alpha] \text { and } f>0 \text { in }(\alpha, 1),
\end{equation}
or if $f$ is of bistable type, i.e.,
\begin{equation}\label{bistable}
 \text{there exists }\alpha \in(0,1)\text{ such that } f<0 \text { on }(0, \alpha) \text { and } f>0 \text { on }(\alpha, 1) \text{ with } \int_0^1 f>0,
\end{equation} then~\eqref{traveling-wave} admits traveling fronts connecting $1$ to $0$ with a unique constant speed $c^*_e$ (which may be  non-positive), see~\cite{C2007,BFRW1997,S1980,CD2007}. Hence the spreading set  $\mathcal S$ can be formally defined via~\eqref{se}.  However, unlike in the KPP case, the nonemptiness of $\mathcal S$ is not guaranteed by a direct verification, and would require a separate analysis depending on the specific nonlinearity and kernel.

 For the  above types of $f$, Theorem \ref{invasionS} does not hold for arbitrary  nonempty $u_0$. We additionally require  the initial support $U$ to be ``sufficiently  large'' to ensure the invasion solution. This   is related to sharp-threshold initial condition  for  equation~\eqref{equation} with bistable and ignition type nonlinearities~\cite{Z2006,DM2010,ADK2023,DLL2025,DP2015}. Therefore, we only consider the KPP case in this work. The extension of the present geometric framework to ignition or bistable nonlinearities, which involves both establishing the nonemptiness and geometric properties of $\mathcal S$ and adapting the propagation dynamics for large initial data, remains an interesting direction for future study.

Our theoretical framework is not restricted to the specific nonlocal operator considered. Although the analysis focuses on nonlocal operators $\mathcal J\star u-u$, the geometric-variational approach for constructing the spreading set $\mathcal W$ and the directional speed $w(e)$ relies essentially on three properties: (i) existence of well-defined wave speeds $c_e^*$;  (ii) convexity and regularity of the intrinsic spreading set $\mathcal S$; and (iii) validity of comparison principles.  Consequently, this framework applies naturally to other diffusion models that support traveling fronts and satisfy (i)-(iii),
 such as equations involving fractional Laplacians $(-\Delta)^s$ or spatio-temporally nonlocal operators.

 \subsection{Examples}The geometry of the spreading set $\mathcal S$ plays a crucial role in determining the asymptotic behavior of solutions. In the simplest case, when $\mathcal J$ is radially symmetric, equation~\eqref{traveling-wave} admits a solution $(\phi, c^*)$ for every direction $e\in\mathbb S^{N-1}$. The spreading set $\mathcal S$ of any invading solution to~\eqref{equation} with compactly supported initial conditions exists and equals
\[\mathcal S=B_{c^*},\]
where $c^*$ is the minimal wave speed. We now consider more general kernels that break radial symmetry. For example,  consider a multi-Gaussian kernel of the form
\begin{equation}\label{Gaussian}
\mathcal J(x)=\frac{1}{(2\pi)^{N/2}|\Sigma|^{1/2}}\exp(-\frac{1}{2} (x-\mu)^T\Sigma^{-1}(x-\mu)), x\in\mathbb R^N,
\end{equation}
 with mean $\mu$ and covariance matrix $\Sigma$.
  Denote $\sigma_e^2=e^T\Sigma e$ and   and direct computations yield
  \[\begin{aligned}\Lambda_e(\lambda) =\exp\left(\lambda(\mu\cdot e)+\frac{1}{2}\lambda^2\sigma_e^2\right),\
  c_e^*=\inf_{\lambda>0}G_e(\lambda)=\inf_{\lambda>0}\frac{\exp\left(\lambda(\mu\cdot e)+\frac{1}{2}\lambda^2\sigma_e^2\right)}{\lambda},\\
  G'(\lambda) = \frac{ (\lambda^2 \sigma_e^2+\lambda\mu \cdot e -1) e^{\lambda(\mu \cdot e) + \frac{1}{2} \lambda^2 \sigma_e^2} }{\lambda^2},\ \lambda_e^* = \frac{-(\mu \cdot e) + \sqrt{(\mu \cdot e)^2 + 4\sigma_e^2}}{2\sigma_e^2}.
  \end{aligned}\]
 If $\mu=0$, the kernel is centered at the origin. Then $\lambda_e^* = 1/\sigma_e$, and the minimal wave speed simplifies to $c_e^* = \sigma_e\sqrt{\mathrm{e}}$, where $\mathrm{e}$ is Euler's number. In this case, the spreading set $\mathcal{S}$ is a standard ellipsoid centered at the origin:$$\mathcal S = \{x\in\mathbb R^N : x\cdot e \leqslant \sqrt{\mathrm{e} \cdot e^T \Sigma e}, \forall e\in\mathbb{S}^{N-1} \} = \{x\in\mathbb R^N: \frac{1}{\mathrm{e}}x^T\Sigma^{-1}x \leqslant 1\}.$$
  When $\mu \neq 0$, the kernel is biased, causing the spreading set $\mathcal{S}$ to shift in the direction of the drift. While $c_e^*$ may not yield a simple quadratic form in this general case, it remains a continuous and convex function of $e$. Qualitatively, if $|\mu|$ is sufficiently large, the origin $0$ lies outside $\mathcal{S}$. Consequently, $c_e^*$ can be negative for directions opposing the drift, and for some directions, the line  $re$ may not intersect $\mathcal{S}$ at all, leading to $s(e) = -\infty$. Another important example satisfying Hypothesis \ref{mainhypothesis} is the multivariate Laplace kernel:
  \begin{equation}\label{laplace}\mathcal{J}(x) = \prod_{i=1}^{N} \frac{\lambda}{2} e^{-\lambda |x_i|}, \quad \lambda > 0.\end{equation}
  In this case, the spreading set $\mathcal{S}$ is a strictly convex set that can be viewed as a rounded octahedron centered at the origin.
%

  The spreading set can also be empty.
 For instance, for nonlinearities of the forms~\eqref{ignition} or~\eqref{bistable}, if $||u_0||_{L^\infty(\mathbb R^N)}<\alpha$ or if $\int_0^1f\leqslant 0$ and $f'(0)<0$, then $||u(t,\cdot)||_{L^\infty(\mathbb R^N)}\to 0$ as $t\to+\infty$. Consequently,  the  asymptotic invasion shape is empty.

The two geometric conditions \eqref{SN-1} and \eqref{finite-hausdorff} are independent in general. The following examples, which extend those in \cite{HR} to our nonlocal setting, illustrate typical scenarios where these conditions are met and clarify their geometric meaning.
Nevertheless, under additional structural assumptions, one may imply the other, as shown in~\cite[Proposition 5.1]{HR} and in the results below.

Even when~\eqref{SN-1} holds, the question of  existence of a spreading set is much more intricate when $U$ is unbounded.  Various conditions for the existence of a spreading set  for  equation~\eqref{local-equation} with unbounded $U$ have been given in \cite{HR}. We will also discuss the  validity of  the following convergences:
\begin{equation}\label{convergence}
\lim\limits_{t\to+\infty}\frac{1}{t}F_\lambda(t)=\mathcal W=\lim\limits_{t\to+\infty}\frac{1}{t}U+\mathcal S,
\end{equation}
which are expected to hold but   actually fail in general.

We now exhibit some  examples where some conclusions of Theorems \eqref{th1}-\eqref{th3} may fail and \eqref{SN-1} or \eqref{finite-hausdorff} are not satisfied for our nonlocal equation. Up to rotation, let us  first consider the \textit{subgraphs} of functions in direction $x_N$, i.e.,
$$
U=\left\{x=(x',x_N) \in \mathbb{R}^{N-1}\times\mathbb R:x_N \leqslant \gamma(x')\right\} .
$$
with $\gamma\in L_{\text{loc}}^\infty(\mathbb R^{N-1})$ such that $\gamma(x')/|x'|\to\alpha\in[-\infty,+\infty]$, thus \eqref{SN-1} is fulfilled and the spreading set $\mathcal W$ exists. However, the shape of the spreading set  $\mathcal W$ given by \eqref{th2-2} is completely different according to the sign and boundedness of $\alpha$. If $\alpha=+\infty$(for example, $\gamma(x')=|x'|^2$), then $\mathcal W=\mathbb R^N$; if $\alpha=-\infty$(for example, $\gamma(x')=-|x'|^2$), then $\mathcal W=\mathcal S$. When $0<\alpha<+\infty$, then $\mathcal W$ is a shifted  cone, that is $$\mathcal W=\mathcal S+\mathcal B,\ \mathcal B:=\left\{x=(x',x_N) \in \mathbb{R}^{N-1}\times\mathbb R:x_N \leqslant \alpha |x'|\right\},$$
hence $\mathcal W$ is non-convex and not $C^1$. If $\alpha<0$, the spreading set $\mathcal W$ is still given by the $\mathcal S$-neighborhood   of the same cone $\mathbb R^+\mathcal U(U)$, and it becomes ``rounded'' in its upper part, and moreover $\mathcal W$ is is convex and of class
$C^1$(since $\partial \mathcal S$ is $C^1$). Finally, if $\alpha=0$(for example, $\gamma(x')=c^*$ or $\gamma(x')=\sqrt{|x'|}$), then $\mathcal W$ is  given by the half-space $\{x\in\mathbb R^N, x_N<\max_{e\in \mathbb S^{N-1}}s(e)\cdot e_N\}$ with $e_N=(0,\cdots,0,1)$. Note, if $\gamma(x')$ is assumed to have uniformly bounded local oscillations,
$$\sup\limits_{x',y'\in\mathbb R^{N-1}, |x'-y'|\leqslant1}|\gamma(x')-\gamma(y')|<+\infty,$$
then the condition \eqref{finite-hausdorff} is fulfilled\cite{HR}. Here is  a two-dimensional scenario where the unbounded direction is fixed as $(-1,0)$, while all other directions remain bounded. We present four Gaussian kernel functions with distinct biased centers, see Figure \ref{figure-1}.

 \begin{figure}
\centering
\subfigure[Centered at the origin, symmetric propagation.]{
  \includegraphics[scale=0.13]{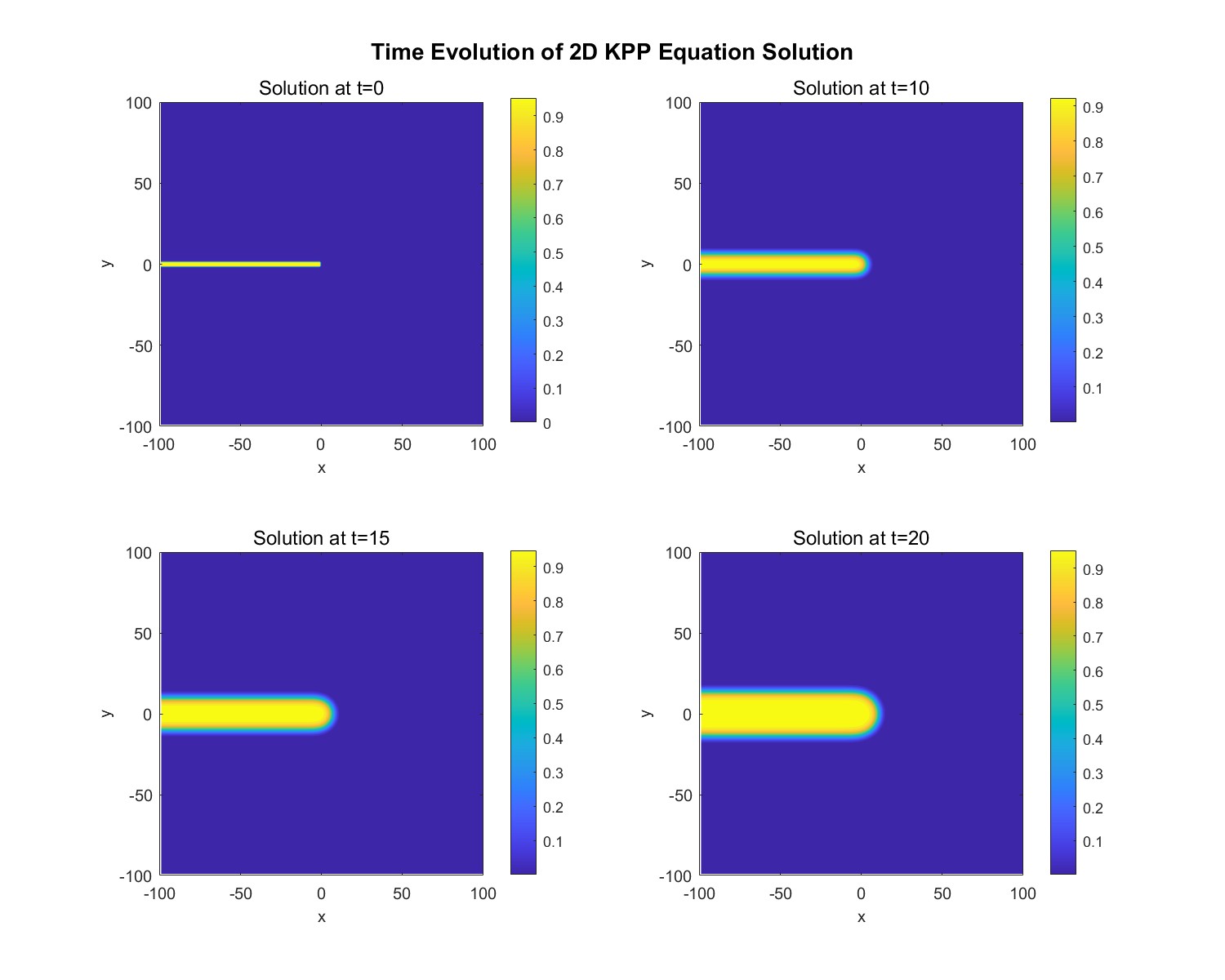}
}
\subfigure[Horizontally biased, rightward-biased propagation.]{
  \includegraphics[scale=0.13]{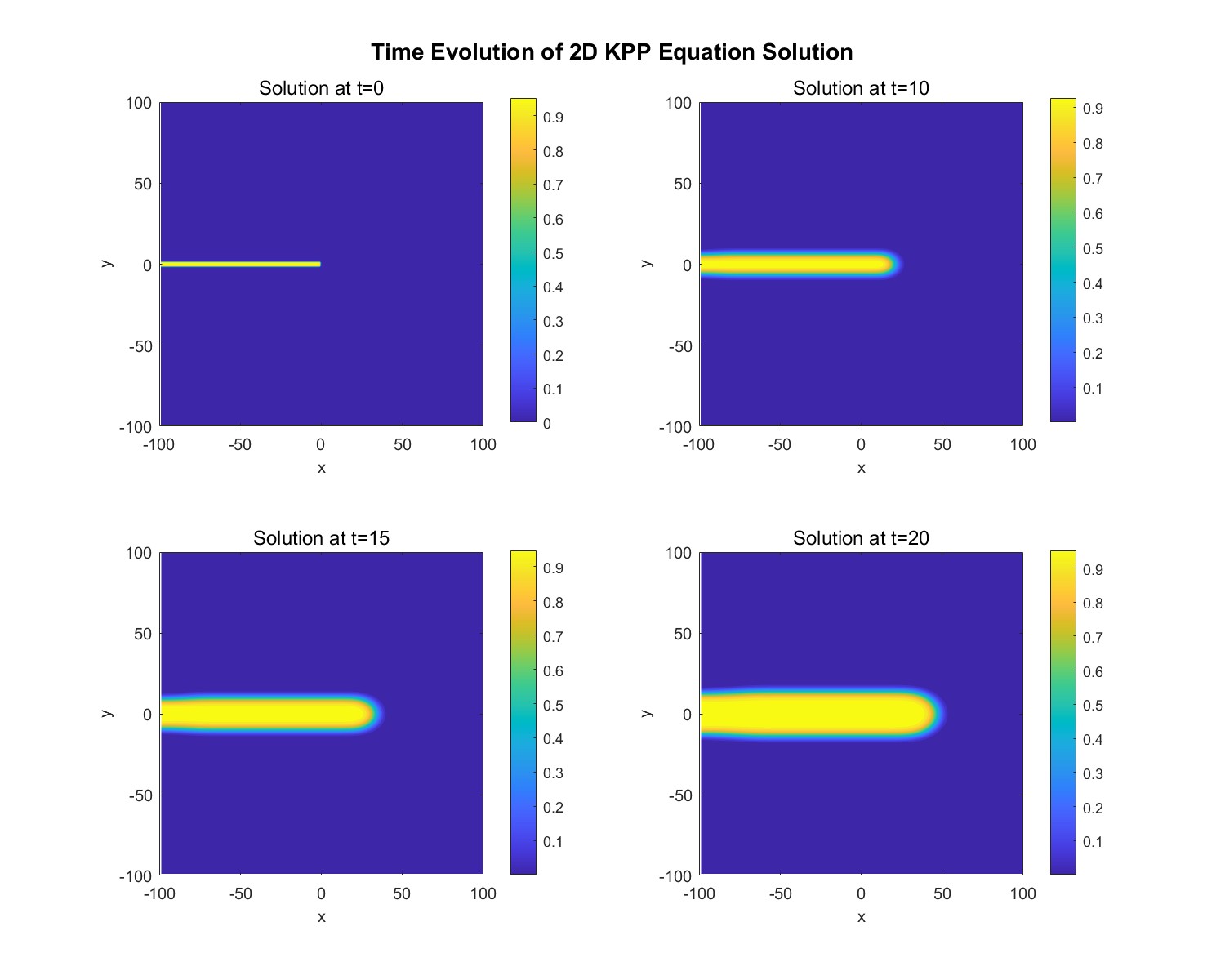}
}
\subfigure[Vertically  biased, upward-biased propagation.]{
  \includegraphics[scale=0.13]{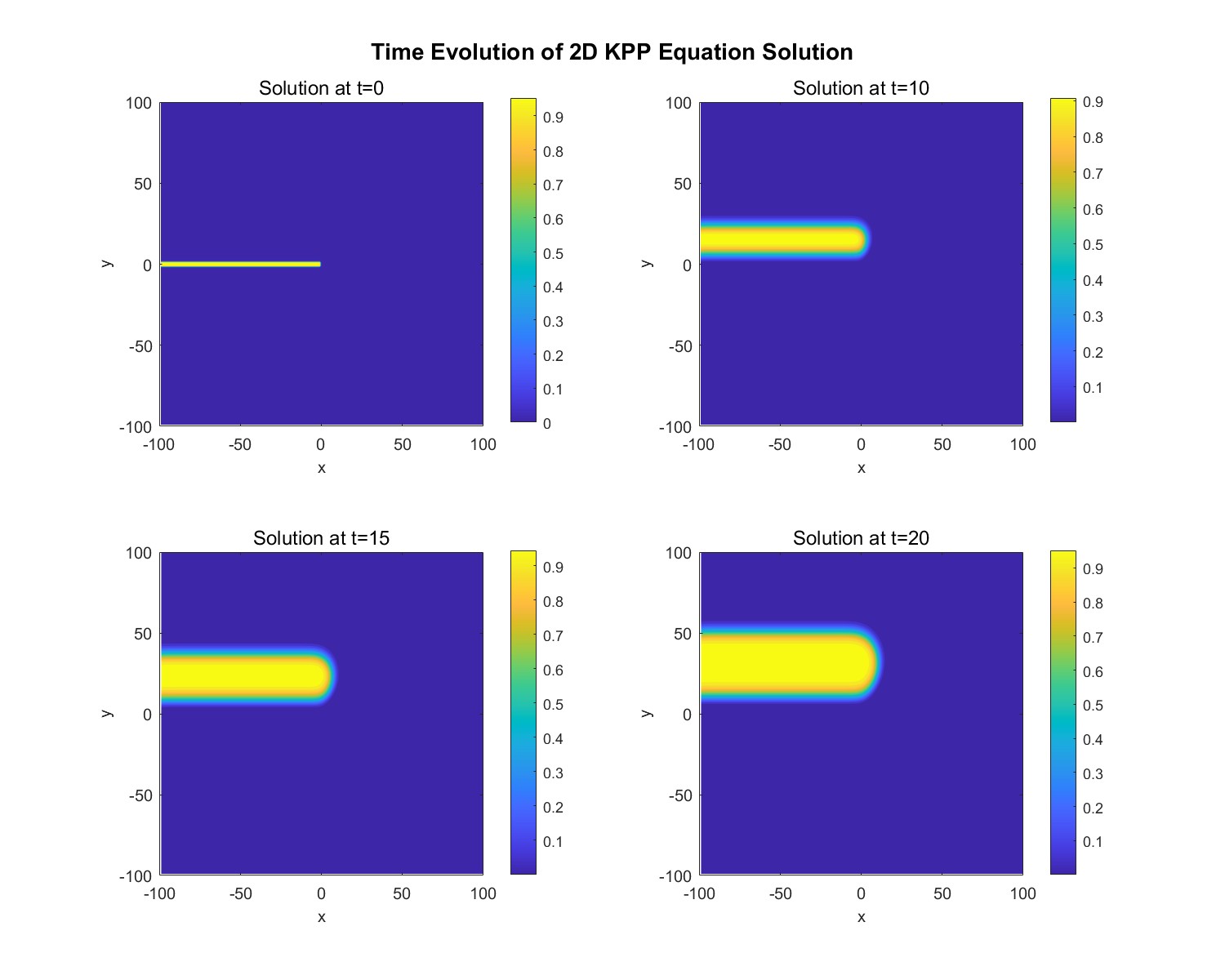}
}
\subfigure[Diagonally biased,  northeast-biased propagation.]{
  \includegraphics[scale=0.13]{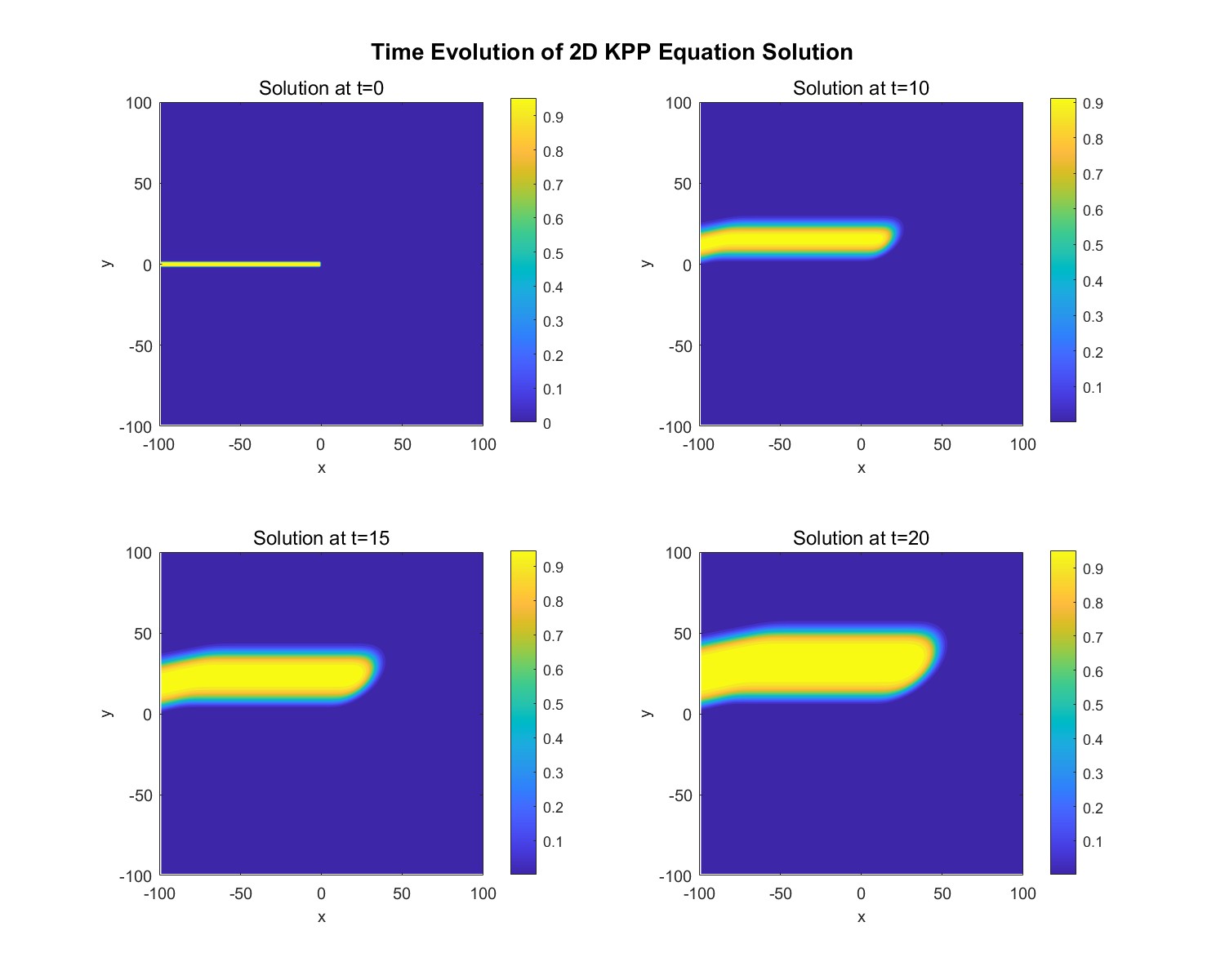}
}
\caption{Time evolutions of 2D KPP nonlocal equations  for Gaussian kernels with different centers. The unbounded direction is fixed as $(-1,0)$.}\label{figure-1}
\end{figure}

\section{Preliminaries}\label{preliminaries}

In addition to existence and uniqueness, we study the continuity of the maps  $c^*_e:\mathbb S^{N-1}\to \mathbb R$ and $\phi_e:\mathbb S^{N-1}\to \mathbb R$.  Although continuity of the wave speed with respect to direction is often implicitly assumed in periodic or Fisher-KPP settings~\cite{BF2012,W2002}, it has been rigorously established in several cases, including ignition type~\cite{AG2016}, strongly bistable reactions~\cite{G2018}, and more general cases~\cite{GHR2025}.

Define the minimal wave speed $c_e^*$ and the moment generating function $\Lambda_e(\lambda)$ by
$$\begin{aligned}\Lambda_e(\lambda):=\int_{\mathbb R^N}\mathcal J(y)e^{\lambda(y\cdot e)}dy=\int_{\mathbb R}J_e(z)e^{\lambda z}dz,\ \mathcal D_e:=\{\lambda>0:\Lambda_e(\lambda)<+\infty
\},\\
\lambda_e^+:=\sup\mathcal D_e\in(0,+\infty],\ c_e^*:=\inf_{\lambda\in\mathcal D_e}G_e(\lambda)=\inf_{\lambda\in\mathcal D_e}\frac{\Lambda_e(\lambda)-1+f'(0)}{\lambda},\
\end{aligned}$$
By the assumption (J), one has $\mathcal D_e\neq\varnothing$ and $\mathcal D_e$  is an interval of the form
\[(0,\lambda_e^+) \text{ or } (0,\lambda_e^+].
\]
\begin{proposition}\label{Lambda}Assume that $\mathcal J$ satisfies (J) and $f'(0)>0$. Then:
\begin{itemize}
\item[1.] $G_e,\Lambda_e\in C^{\infty}((0,\lambda_e^+))$ and $\Lambda_e(\lambda)$ is strictly convex on $(0,\lambda_e^+)$.
\item[2.]$G_e(\lambda)$ is either strictly decreasing on $\mathcal D_e$, or admits a unique $\lambda_e^*\in\mathcal D_e$ such that $G_e(\lambda)$ is strictly decreasing on $(0,\lambda_e^*)$ and strictly increasing on $(\lambda_e^*,\lambda_e^+)$. Moreover, $\lim\limits_{\lambda\to0^+}G_e(\lambda)=+\infty.$
    \item[3.] The limit $\lim\limits_{\lambda\uparrow\lambda_e^+}\Lambda_e(\lambda)$ exists in $(0,+\infty]$ and $\lim\limits_{\lambda\to+\infty}\Lambda_e(\lambda)=+\infty$ if $\lambda_e^+=+\infty$. Moreover,
             \begin{itemize}
             \item[i.]if $\lim \limits_{\lambda \uparrow \lambda_e^{+}} \Lambda_e(\lambda)=+\infty$, then
$\lim \limits_{\lambda \uparrow \lambda_e^{+}} G_e(\lambda)=+\infty$,
 and there exists a unique $\lambda_e^* \in\mathcal D_e$ such that
$$
c_e^*=G_e\left(\lambda_e^*\right);
$$

\item[ii.] if $\lambda_e^{+}<+\infty$ and $\lim \limits_{\lambda \uparrow \lambda_e^{+}} \Lambda_e(\lambda)<+\infty$,
    then  $\lambda_e^+\in \mathcal D_e$, and either $
c_e^*=G_e\left(\lambda_e^*\right)$ for some unique $\lambda_e^* \in (0,\lambda_e^+)$, or
$c_e^*= G_e(\lambda_e^+)$.
\end{itemize}
\end{itemize}
\end{proposition}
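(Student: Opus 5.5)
The plan is to treat $\Lambda_e$ as a Laplace transform of the one-dimensional density $J_e$ and read off every claim from convexity and monotone convergence.

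\emph{Item 1: smoothness and strict convexity of $\Lambda_e$.} On $(0,\lambda_e^+)$, for any compact $[a,b]\subset(0,\lambda_e^+)$ and any $k\ge0$ we have
\[
|z|^k e^{\lambda z}\le C_k\bigl(e^{a' z}+e^{b' z}\bigr),\qquad \lambda\in[a,b],
\]
for some $a'<a$ and $b<b'<\lambda_e^+$; here $a'$ may be taken negative, or slightly below $a$, using $\int J_e e^{\eta|z|}<\infty$. The right-hand side is integrable, so differentiation under the integral sign gives $\Lambda_e\in C^\infty$. Moreover
\[
\Lambda_e''(\lambda)=\int z^2 J_e(z)e^{\lambda z}\,dz>0,
\]
since $J_e(0)>0$, $J_e$ is continuous, and hence $J_e$ is not concentrated at $0$. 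This gives strict convexity, and $G_e$ is smooth as a quotient.

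\emph{Item 2: shape of $G_e$.} Write $G_e(\lambda)=h(\lambda)/\lambda$ with $h=\Lambda_e-1+f'(0)$. Then $\lambda^2G_e'(\lambda)=\lambda h'(\lambda)-h(\lambda)=:k(\lambda)$, and $k'(\lambda)=\lambda\Lambda_e''(\lambda)>0$, so $k$ is strictly increasing. Since $k(0^+)=-h(0)=-f'(0)<0$, the function $k$ has at most one zero $\lambda_e^*$. Hence $G_e$ is either strictly decreasing on $\mathcal D_e$ or decreasing-then-increasing with a unique critical point. At the left end, $h(0^+)=f'(0)>0$ gives $G_e(0^+)=+\infty$. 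If $\lambda_e^+\in\mathcal D_e$, the same monotonicity extends to the closed endpoint by continuity of $\Lambda_e$ up to $\lambda_e^+$, which holds by monotone convergence.

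\emph{Item 3: behaviour at $\lambda_e^+$.} The limit of $\Lambda_e$ exists because $\Lambda_e$ is convex, hence eventually monotone. Split $\Lambda_e(\lambda)=\int_{z<0}+\int_{z\ge0}$. The first part stays bounded by $1$. The second part is nondecreasing in $\lambda$ and tends, by monotone convergence, to $\int_{z\ge0}J_e e^{\lambda_e^+ z}$. If $\lambda_e^+=\infty$, this tends to $+\infty$, because $J_e>0$ near some $z_0>0$ by continuity and $J_e(0)>0$.

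In case (i), $\Lambda_e\to\infty$ while $\lambda$ stays bounded (or $\lambda\to\infty$), and the main obstacle appears here. When $\lambda_e^+=\infty$, one needs $\Lambda_e(\lambda)/\lambda\to\infty$. This follows from $\Lambda_e(\lambda)\ge c\,e^{\lambda z_0}$ for some $z_0>0$ with $\int_{z>z_0}J_e>0$, which holds again because $J_e(0)>0$ and $J_e$ is continuous. Hence $G_e\to+\infty$ at both ends, and by Item 2 $G_e$ must attain its infimum at the unique critical point $\lambda_e^*$.

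In case (ii), monotone convergence gives $\Lambda_e(\lambda_e^+)<\infty$, so $\lambda_e^+\in\mathcal D_e$. By the dichotomy in Item 2, either the critical point lies in $(0,\lambda_e^+)$, in which case it is the minimizer, or $G_e$ is decreasing on all of $\mathcal D_e$, in which case the infimum is $G_e(\lambda_e^+)$, using continuity at the endpoint.
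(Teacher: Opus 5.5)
Your proof is correct and follows the paper's route. You differentiate under the integral sign, use the strict monotonicity of $\lambda\Lambda_e'(\lambda)-\Lambda_e(\lambda)+1-f'(0)$ to get a single sign change of $G_e'$, pass to the limit at $\lambda_e^+$ by monotone convergence (the paper uses Fatou), and use the blow-up of $G_e$ at both ends in case (i). Two of your details are in fact more careful than the paper's: the two-exponential domination $C_k(e^{a'z}+e^{b'z})$, since a single $e^{\lambda_K z}$ cannot dominate at both $z\to\pm\infty$, and the explicit bound $\Lambda_e(\lambda)\geqslant c\,e^{\lambda z_0}$, which is what actually gives $G_e\to+\infty$ when $\lambda_e^+=+\infty$.
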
Proposition \ref{Lambda} gives a complete characterization of the possible behaviors of $G_e$ near $\lambda_e^+$ and of the attainability of the minimal speed $c_e^*$.
Some previous works \cite{XLR2021} only considered the case
$\Lambda_e(\lambda)\to+\infty$
as $\lambda\uparrow\lambda_e^+,$
which guarantees the existence of an interior minimizer of $G_e$.
Here we further include the case where
$
\Lambda_e(\lambda_e^+)<+\infty.
$
For instance, consider
$$
\mathcal J(z)
=
C\frac{e^{-|z|}}{1+|z|^q},
\qquad z\in\mathbb R^N,
$$
with $q>N$. Then
$$
\lambda_e^+=1,\
\Lambda_e(1)=\int_{\mathbb R^N}\mathcal J(y)e^{y\cdot e}dy<+\infty \text{ for any } e\in\mathbb S^{N-1}.
$$
Hence, the minimal value of $G_e$ may be achieved at the boundary point
$\lambda_e^+$.
\begin{proof}
By {\rm (J)}, there exists $\eta>0$ such that $\int_{\mathbb R}J_e(z)e^{\eta |z|}dz<+\infty,$
hence $(0,\eta)\subset\mathcal D_e$. Moreover, by H{\"o}lder's inequality, $\mathcal D_e$ is an interval of the form $(0,\lambda_e^+)$ or $(0,\lambda_e^+]$.

For every compact interval $K\subset(0,\lambda_e^+)$, there exist $\lambda_K\in\mathcal D_e$ and $C_K>0$ such that
$|z|^k e^{\lambda z}
\leqslant C_K e^{\lambda_K z}$
for all $k\geqslant0$,  $\lambda\in K$, and $z\in\mathbb R$.
Since $\int_{\mathbb R}J_e(z)e^{\lambda_K z}dz<+\infty$,
differentiation under the integral sign is justified by the dominated convergence theorem. Consequently,
\[
\Lambda_e^{(k)}(\lambda)
=
\int_{\mathbb R}z^kJ_e(z)e^{\lambda z}dz,
\qquad k\geqslant1.
\]
Hence $\Lambda_e,G_e\in C^\infty((0,\lambda_e^+))$. Furthermore,
$\Lambda_e''(\lambda)=\int_{\mathbb R}z^2J_e(z)e^{\lambda z}dz>0$,
so $\Lambda_e$ is strictly convex on $(0,\lambda_e^+)$.

Next,
$$
G_e'(\lambda)
=
\frac{
\lambda\Lambda_e'(\lambda)-\Lambda_e(\lambda)+1-f'(0)
}{\lambda^2}.
$$
Let
$$
H_e(\lambda)
:=
\lambda\Lambda_e'(\lambda)-\Lambda_e(\lambda)+1-f'(0).
$$
Then
$$
H_e'(\lambda)
=
\lambda\Lambda_e''(\lambda)>0, H_e(0)=-f'(0)<0,
$$
hence $H_e$ is strictly increasing on $(0,\lambda_e^+)$. Therefore,
$G_e'$ changes sign at most once on $(0,\lambda_e^+)$, which proves the monotonicity properties of $G_e$.

Since $\Lambda_e(0)=1$ and $f'(0)>0$,
$$
G_e(\lambda)
=
\frac{\Lambda_e(\lambda)-1+f'(0)}{\lambda}
\to+\infty
\qquad\text{as }\lambda\to0^+.
$$
Moreover, by monotone convergence, $\lim\limits_{\lambda\uparrow\lambda_e^+}\Lambda_e(\lambda)$
exists in $(0,+\infty]$.

If $\lambda_e^+=+\infty$, then, since $\mathcal J(0)>0$ and $\mathcal J$ is continuous, there exist $\delta>0$ and a measurable set $A\subset\mathbb R^N$ with positive measure such that
$$
y\cdot e\geqslant \delta
\quad\text{for }y\in A,
\qquad
\int_A\mathcal J(y)dy>0.
$$
Consequently,
$\Lambda_e(\lambda) \geqslant e^{\delta\lambda}\int_A\mathcal J(y)dy$,
which implies $\Lambda_e(\lambda)\to+\infty$ as $\lambda\to+\infty$.

If $ \lim\limits_{\lambda\uparrow\lambda_e^+}\Lambda_e(\lambda)=+\infty$,
then also
$\lim\limits_{\lambda\uparrow\lambda_e^+}G_e(\lambda)=+\infty$. Combining this with
$\lim\limits_{\lambda\to0^+}G_e(\lambda)=+\infty$,
 and the monotonicity properties of $G_e$, we deduce that $G_e$ admits a unique minimizer $\lambda_e^*\in\mathcal D_e$ such that $c_e^*=G_e(\lambda_e^*)$.

Finally, if
$$
\lambda_e^+<+\infty
\quad\text{and}\quad
\lim_{\lambda\uparrow\lambda_e^+}\Lambda_e(\lambda)<+\infty,
$$
then Fatou's lemma implies $\Lambda_e(\lambda_e^+)<+\infty$,
namely $\lambda_e^+\in\mathcal D_e$. The conclusion then follows from the monotonicity properties of $G_e$.
\end{proof}
The above characterization of $c_e^*$ and the minimizers of $G_e$
will be useful in studying the dependence of traveling fronts on the propagation direction.
The following proposition establishes the corresponding continuity properties.
\begin{proposition}\label{continuity}
Assume conditions (J) and (F) hold. Then the maps $e \mapsto c_e^*$ and $e \mapsto \lambda_e^+$ are continuous from $\mathbb{S}^{N-1}$ to $\mathbb{R}$ and $\mathbb{R}^+$, respectively.

\end{proposition}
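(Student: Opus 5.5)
The plan is to work with the vector moment generating function $M(p):=\int_{\mathbb R^N}\mathcal J(y)e^{p\cdot y}\,dy\in(0,+\infty]$, $p\in\mathbb R^N$, so that $\Lambda_e(\lambda)=M(\lambda e)$. Both claimed continuities then become statements about the convex function $M$ and its effective domain $D:=\{M<+\infty\}$. By H\"older's inequality $M$ is convex. By Fatou's lemma $M$ is lower semicontinuous on $\mathbb R^N$. By (J), $M$ is bounded on $B_\eta$, so $B_\eta\subset D$. In particular $D$ is a convex set containing a neighbourhood of $0$, and $M$ is finite and convex on the open set $\operatorname{int}D$, hence continuous there.

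\emph{Continuity of $e\mapsto\lambda_e^+$.} I would write $\lambda_e^+=\sup\{\lambda>0:\lambda e\in D\}=1/p_D(e)$, where $p_D(x)=\inf\{t>0:x\in tD\}$ is the Minkowski gauge of $D$. Since $B_\eta\subset D$, the gauge satisfies $0\leqslant p_D(x)\leqslant|x|/\eta$ and is convex and finite on $\mathbb R^N$. A finite convex function on $\mathbb R^N$ is continuous, so $e\mapsto p_D(e)$ is continuous. Hence $\lambda_e^+=1/p_D(e)$ is continuous as a map into $(0,+\infty]$ with its natural topology; it is real-valued when $D$ is bounded. Whether $D$ or $\overline D$ is used does not matter, because the gauges of $D$ and $\overline D$ coincide when $0\in\operatorname{int}D$.

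\emph{Upper semicontinuity of $e\mapsto c_e^*$.} Fix $e$ and $\varepsilon>0$. By convexity of $\lambda\mapsto\Lambda_e(\lambda)$ and lower semicontinuity, $\Lambda_e(\lambda)\to\Lambda_e(\lambda_e^+)$ as $\lambda\uparrow\lambda_e^+$ whenever $\lambda_e^+\in\mathcal D_e$. Therefore $c_e^*=\inf_{0<\lambda<\lambda_e^+}G_e(\lambda)$, and we can pick $\lambda_0<\lambda_e^+$ with $G_e(\lambda_0)<c_e^*+\varepsilon$. Then $\lambda_0e\in\operatorname{int}D$, and $M$ is continuous there. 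So for $e'$ close to $e$ we have $\lambda_0e'\in\operatorname{int}D$ and $G_{e'}(\lambda_0)\to G_e(\lambda_0)$. This gives $\limsup_{e'\to e}c_{e'}^*\leqslant c_e^*+\varepsilon$. The same argument with $\lambda_0=\eta/2$ yields a uniform upper bound $c_{e'}^*\leqslant C$ for all $e'\in\mathbb S^{N-1}$.

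\emph{Lower semicontinuity of $e\mapsto c_e^*$.} This is the main step, because the near-minimizers could escape to $0$ or to $+\infty$; I need uniform confinement of them. Take $e_n\to e$ and $\lambda_n\in\mathcal D_{e_n}$ with $G_{e_n}(\lambda_n)\leqslant c_{e_n}^*+1/n\leqslant C+1$.

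First I rule out $\lambda_n\to0$. Let $m=\int y\mathcal J(y)\,dy$, which is finite by (J). Jensen's inequality together with $e^s\geqslant1+s$ gives $\Lambda_{e'}(\lambda)\geqslant1+\lambda m\cdot e'$. Hence
\[
G_{e'}(\lambda)\geqslant f'(0)/\lambda-|m|
\]
uniformly in $e'$, so $\lambda_n\geqslant a>0$.

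Next I rule out $\lambda_n\to+\infty$. Since $\mathcal J(0)>0$ and $\mathcal J$ is continuous, there are $\delta,r,c_0>0$ such that $\mathcal J\geqslant c_0$ on $B_r(2\delta e)$. For $e'$ near $e$ we have $y\cdot e'\geqslant\delta$ on this ball. Hence $\Lambda_{e'}(\lambda)\geqslant c_0|B_r|e^{\delta\lambda}$, which forces $G_{e'}(\lambda)\to+\infty$ uniformly in such $e'$. This contradicts the bound $G_{e_n}(\lambda_n)\leqslant C+1$.

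So along a subsequence realizing $\liminf_n c_{e_n}^*$ we may assume $\lambda_n\to\bar\lambda\in[a,+\infty)$ and $\lambda_ne_n\to\bar\lambda e$. By Fatou's lemma, $M(\bar\lambda e)\leqslant\liminf_n M(\lambda_ne_n)<+\infty$. Thus $\bar\lambda\in\mathcal D_e$, and
\[
\liminf_n c_{e_n}^*\geqslant\liminf_n G_{e_n}(\lambda_n)\geqslant G_e(\bar\lambda)\geqslant c_e^*.
\]
Together with the upper semicontinuity, this gives continuity of $e\mapsto c_e^*$. The continuity is compatible with Proposition~\ref{Lambda}: the argument covers both an interior minimizer $\lambda_e^*$ and a minimizer at the endpoint $\lambda_e^+$.
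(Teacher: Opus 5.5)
Your proof is correct, and it takes a genuinely different route from the paper's.

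The paper proves the two semicontinuities of $e\mapsto\lambda_e^+$ separately: dominated convergence for the $\liminf$, Fatou for the $\limsup$. It then gets continuity of $c_e^*$ from Berge's maximum theorem, applied to $G_e(\lambda)$ over the correspondence $e\mapsto\overline{\mathcal D_e}$, so the first part feeds into the second.

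You instead work with the convex, lower semicontinuous function $M$ on $\mathbb R^N$. The identity $\lambda_e^+=1/p_D(e)$ gives continuity at once; in fact $|p_D(x)-p_D(y)|\leqslant|x-y|/\eta$. For $c_e^*$ you prove the two semicontinuities by hand. The upper one uses an interior point $\lambda_0e\in\operatorname{int}D$, where $M$ is continuous. The lower one uses Fatou after confining near-minimizers to a fixed interval $[a,b]\subset(0,+\infty)$, uniformly in the direction. In your version the two parts are logically independent.

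What your route buys is rigor exactly where the paper's shorter argument is loose.
\begin{itemize}
\item The dominated convergence in the paper's $\liminf$ step needs a dominating function for $\mathcal J(y)e^{\lambda y\cdot e_n}$. That function really comes from the convexity of $D$, which is what you use.
\item Berge's theorem needs compact values, but $\overline{\mathcal D_e}=[0,\lambda_e^+]$ is unbounded when $\lambda_e^+=+\infty$, for example for the Gaussian kernel. Your uniform bounds $a\leqslant\lambda_n\leqslant b$ are precisely what replaces compactness.
\item The paper's domination by $\mathcal J(y)e^{\tilde\lambda|y|}$ is guaranteed integrable by (J) only for $\tilde\lambda\leqslant\eta$, not for $\tilde\lambda$ up to $\lambda_e^+$. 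So it does not give joint continuity of $(e,\lambda)\mapsto G_e(\lambda)$ at points $(e,\lambda_e^+)$ with $\lambda_e^+\in\mathcal D_e$; there Fatou guarantees only lower semicontinuity. Taking $\lambda_0<\lambda_e^+$ for your upper bound sidesteps this.
\end{itemize}
You are also right that $\lambda_e^+$ must be read as continuous into $(0,+\infty]$. Proposition~\ref{Lambda} allows $\lambda_e^+=+\infty$, while the statement's codomain $\mathbb R^+$ does not.
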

\begin{proof}
\textit{Continuity of the map  $e\mapsto \lambda_e^+$}: To prove the continuity of $e\mapsto \lambda_e^+$, we establish both lower semi-continuity  and upper semi-continuity  on $\mathbb S^{N-1}$.
\begin{itemize}
\item $\liminf\limits_{e_n\to e_0}\lambda_{e_n}^+\geqslant \lambda_{e_0}^+$. Pick any $\lambda<\lambda_{e_0}^+$. By the definition and continuity, $\Lambda_{e_0}(\lambda)<+\infty$ and $\Lambda_{e_n}(\lambda)\to \Lambda_{e_0}(\lambda)$  as $e_n\to e_0$ by the Lebesgue dominated convergence theorem. Thus, for $e_n$ sufficiently close to $e_0$ , $\Lambda_{e_n}(\lambda) < +\infty$, implying $\lambda_{e_n}^+ > \lambda$. Taking $\lambda \uparrow \lambda_{e_0}^+$, we obtain $\liminf_{e \to e_0} \lambda_e^+ \geqslant \lambda_{e_0}^+$.

    \item $\limsup\limits_{e_n\to e_0}\lambda_{e_n}^+\leqslant \lambda_{e_0}^+$.
     Suppose by contradiction that $\limsup\limits_{e_n\to e_0}\lambda_{e_n}^+> \lambda_{e_0}^+.$
Then there exist $\tilde\lambda>\lambda_{e_0}^+$ and a subsequence still denoted by $e_n$ such that
$\lambda_{e_n}^+>\tilde\lambda$
for all $n$. Hence
\[
\Lambda_{e_n}(\tilde\lambda)<+\infty
\text{ for all }n.
\]
By Fatou's lemma,
\[
\Lambda_{e_0}(\tilde\lambda)
=
\int_{\mathbb R^N}\mathcal J(y)\liminf_{n\to\infty}e^{\tilde\lambda(y\cdot e_n)}dy
\leqslant
\liminf_{n\to\infty}
\Lambda_{e_n}(\tilde\lambda)
<+\infty,
\]
contradicting $\tilde\lambda>\lambda_{e_0}^+$.

 \end{itemize}

\textit{Continuity of the map $e \mapsto c_e^*$}: We use the variational representation together with Berge's theorem.   Since $e\mapsto\lambda_e^+$ is continuous and (J) holds, the correspondence $e\mapsto\overline {\mathcal D_e}$ is continuous  with non-empty compact values in Hausdorff sense.  To apply Berge's theorem, we check the joint continuity of $G_e(\lambda)$ on $\mathbb S^{N-1}\times\overline {\mathcal D_e}$. For any $(e_n, \lambda_n) \to (e, \lambda)$, if $\Lambda_e(\lambda) < \infty$, the joint continuity of $G_e(\lambda)$ follows from the Dominated Convergence Theorem, as the integrand is controlled by $\mathcal{J}(y)e^{\tilde{\lambda}|y|}$ for some $\tilde{\lambda} \in \mathcal{D}_e$. If $\Lambda_e(\lambda) = \infty$ (which occurs when $\lambda = \lambda_e^+\notin\mathcal D_e$ or  $\lambda=0$), then by Fatou's Lemma, we have $\liminf_{n \to \infty} \Lambda_{e_n}(\lambda_n) \geqslant \Lambda_e(\lambda) = \infty$, which implies $G_{e_n}(\lambda_n) \to \infty$. Thus, $G_e(\lambda)$ is jointly continuous (as a mapping into $(-\infty, \infty]$) on the compact set $\mathbb{S}^{N-1} \times \overline{\mathcal{D}_e}$.
 Now,
$$
c_e^*=\min _{\lambda \in\overline{\mathcal D_e}} G_e( \lambda).
$$
By Berge's minimum  theorem,  the map $e \mapsto \min \limits_{\lambda\in \overline{\mathcal D_e}} G_e( \lambda)$ is continuous. Hence $e \mapsto c_e^*$ is continuous.
\end{proof}

We now recall classical results on the long-time behavior of solutions to~\eqref{equation} with compactly supported initial data $u_0\geqslant 0$, $u_0\not \equiv0$.
Let $\mathcal S$ and the ray speed $s(e)$ be defined as in~\eqref{se}.
\begin{proof}[Proof of Proposition \ref{NewS}]
 $\mathcal S\subset\left\{re:e\in\mathbb S^{N-1},-s(-e)<r<s(e)
\right\} $. Let $x\in\mathcal S$, then $x=|x|\hat x$ and $|x|\hat x\cdot e<c_e^*$ for all $\hat x\cdot e>0$, which yields
\[|x|<\inf_{\hat x\cdot e>0}\frac{c_e^*}{\hat x\cdot e}=s(\hat x).
\]
Note that  $|x|$ is strictly lower than the infimum $s(\hat x)$.  Similarly, considering $\xi$ such that $\hat x \cdot \xi < 0$, one obtains
\[
|x| > -s(-\hat x).
\]
Therefore for any $x\in \mathcal S$,  we have $-s(-\hat x) < |x|< s(\hat x)$.

$\{re:\, -s(-e) < r < s(e)\} \subset \mathcal S$.
Let $x$ satisfy   $-s(-\hat x) < |x| < s(\hat x)$.
We show that $x \cdot \xi < c_\xi^*$ for all $\xi \in \mathbb S^{N-1}$. For any $\xi$ such that $\hat x \cdot \xi > 0$, then by definition of $s(\hat x)$, $
s(\hat x) \leqslant \frac{c_\xi^*}{\xi \cdot \hat x}$
and hence
\[
|x| < s(\hat x)\leqslant  \frac{c_\xi^*}{\xi \cdot \hat x},
\]
which implies $ x\cdot \xi < c_\xi^*$ if $\hat x\cdot \xi>0$. If $\hat x \cdot \xi < 0$, using $-s(-\hat x) < |x|$ and the definition of $s(-\hat x)$, we similarly obtain $x\cdot \xi < c_\xi^*.$

 For $\hat x \cdot \xi = 0$, we still have $x\cdot \xi<c_\xi^*$ if $c_{\xi}^*>0$. But when $c_{\xi}^*\leqslant 0$, the line $r\hat x$ does not intersect with $\mathcal S$, which means $s(\hat x)=-\infty$ by the convention. Therefore the proof is complete.

We now turn to the representation of  $\mathcal S$ in \eqref{se} under (J) and (F).  The set $\mathcal S$ is always nonempty.   In fact,  the
first moment  of the kernel, $\mathbf p:=\int_{x\in\mathbb R^N}x\mathcal J(x)dx \in \mathbb R^N$, belongs to $\mathcal S$.
Indeed by \eqref{mathcals}, we only need to show that $\mathbf{p} \cdot e<c_e^*$ for all $e \in \mathbb{S}^{N-1}$. Define the auxiliary function
$$
\psi_e(\lambda)=\int_{\mathbb{R}} J_e(z) e^{\lambda z} d z-1-(\mathbf{p} \cdot e) \lambda,\ \lambda\geqslant0.
$$
Clearly, $\psi_e(0)=0$. It holds that
$$
\psi_e^{\prime}(\lambda)=\int_{\mathbb{R}} z J_e(z) e^{\lambda z} d z-\mathbf{p} \cdot e=\int_{\mathbb{R}} z J_e(z) e^{\lambda z} d z-\int_{\mathbb{R}} z J_e(z) d z=\int_{\mathbb{R}} z J_e(z)\left(e^{\lambda z}-1\right) d z,
$$
$\psi_e^{\prime}(0)=0$ and $\psi_e^{\prime \prime}(\lambda)=\int_{\mathbb{R}} z^2 J_e(z) e^{\lambda z} d z>0$. Then we have that for all $\lambda>0$,
$$
\psi_e^{\prime}(\lambda)>0 \text { and } \psi_e(\lambda) >0 ,
$$
which means \[
c_e^*-\mathbf{p} \cdot e=\inf _{\lambda>0} \frac{\int_{\mathbb{R}} J_e(z) e^{\lambda z} d z-1+f^{\prime}(0)}{\lambda}-\mathbf{p} \cdot e=\inf_{\lambda>0}\left(\frac{\psi_e(\lambda)} {\lambda}+\frac{f'(0)}{\lambda}\right)>0 \text{ for all } e \in \mathbb{S}^{N-1}.\]
The claim follows.  The fact that $\mathcal S$ is star-shaped with respect to $\mathbf p$  follows directly from its definition \eqref{se}.

 Note that for any  fixed direction $e$ with  $s(e)$ bounded, the  Freidlin-G\"artner formula still holds for representation \eqref{se}
\begin{equation}\label{C1-1}s(e)=\inf_{\xi\in\mathbb S^{N-1},\xi\cdot e>0}\frac{c^*_{\xi}}{\xi\cdot e}\in\mathbb R.
\end{equation} If there exist  a point $x$ and a  speed  $s(\xi)$ such that  $x\cdot e<c_e^*$,  $x\cdot (-e)<c_{-e}^*$,  $\xi\cdot e>0$ with $s(\xi)=\frac{c_e^*}{\xi\cdot e}$, then it follows naturally that
\[
s(\xi)=\frac{-c_e^*}{-\xi\cdot e}>\frac{c_{-e}^*}{-\xi\cdot e}=-\frac{c_{-e}^*}{-\xi\cdot (-e) }>-s(-\xi).\]Specifically, if $c_{\xi}^*<0$, then $s(e)<0$ for all $e$ such that $e\cdot \xi>0$. For $e\in\mathbb S^{N-1}$ such that  $\xi\cdot e=0$,  the line $r e$ does not intersect $\mathcal S$, so  $s(e)=-\infty$.

\end{proof}

\section{Proofs}\label{proofs}
\subsection{Invasion}
The precise results  derived in~\cite{W1982,W2002}  assert that for  solutions $u$ to~\eqref{equation} with a compactly supported initial condition $0\leqslant u_0(x)\leqslant 1$ of positive measure, the spreading set is precisely  $\mathcal S$.
\begin{proof}[Proof of Theorem \ref{invasionS}]
Note  there exists $\alpha \in(0,1)$ such that $f$  is  nonincreasing in $[\alpha,1]$ and  $f(u)>0$  for $u\in(\alpha,1)$.
Here we also extend  $f(u)$ to $0$ for $u\notin[0,1]$.  We will need two auxiliary results.

\begin{lemma}\label{mathcalH}{\rm Under the assumptions (J) and (F), let $v\in C^{1,1}(\mathbb R_-\times \mathbb R^N)$ be a supersolution of the equation
\begin{equation}\label{-R}
v_t=\mathcal J\star v-v+f(v),\ t<0,\ x\in\mathbb R^N.
\end{equation}
Assume there exists a set  $\mathcal H\subset \mathbb R^N$ such that
\[\sup_{x\in\mathcal H}\text{dist}(x,\mathbb R^N\backslash \mathcal H)=+\infty,\ \inf_{t<0,x\in\mathcal H}v(t,x)>\alpha,
\]
with $\alpha\in(0,1)$ satisfying that $f$ is nonincreasing  on $[\alpha,1]$.
Then
\[\liminf_{\text{dist}(x,\mathbb R^N\backslash \mathcal H)\to+\infty}\left(\inf_{t<0}v(t,x)\right)\geqslant 1.
\]}
\end{lemma}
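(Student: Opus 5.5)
Set $m(R):=\inf\{v(t,x):\ t<0,\ \mathrm{dist}(x,\mathbb R^N\setminus\mathcal H)\geqslant R\}$. By hypothesis $m(R)\geqslant m_0:=\inf_{t<0,x\in\mathcal H}v>\alpha$, and $m$ is nondecreasing in $R$. Since $\sup_{x\in\mathcal H}\mathrm{dist}(x,\mathbb R^N\setminus\mathcal H)=+\infty$, the sets involved are nonempty for every $R$. Let $m_\infty:=\lim_{R\to+\infty}m(R)\in(\alpha,+\infty]$. The plan is to argue by contradiction: assume $m_\infty<1$, and show that the supersolution inequality, integrated over a long time interval, pushes $v$ above $m_\infty$ deep inside $\mathcal H$.

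The key estimate comes from comparing $v$ with a spatially constant ODE solution. Fix $R$ large and let $x$ satisfy $\mathrm{dist}(x,\mathbb R^N\setminus\mathcal H)\geqslant 2R$. Because $v\geqslant\alpha$ everywhere in $\mathcal H$ (and we may assume $v\geqslant 0$ outside $\mathcal H$; otherwise we use $\mathcal J\geqslant0$ and the exponential moment bound, with $v$ bounded below), we get
\[
\mathcal J\star v(t,x)\geqslant m(R)\int_{|y|<R}\mathcal J(y)\,dy - C\int_{|y|\geqslant R}\mathcal J(y)\,dy \geqslant m(R)-\varepsilon_R ,
\]
where $\varepsilon_R\to0$ as $R\to\infty$ by (J). Hence at every such point $v_t\geqslant m(R)-\varepsilon_R - v + f(v)$. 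I will then use a minimum-type argument. Fix $T>0$ and $t_0<0$, and consider $w(s)=v(t_0-T+s,x)$ for $s\in[0,T]$. The function $w$ is a supersolution of the ODE $w'=g_R(w):=m(R)-\varepsilon_R-w+f(w)$ with $w(0)\geqslant m(R)$. The ODE starting from $m(R)$ is increasing as long as $g_R>0$. For $w\in[\alpha,m_\infty]$ with $m_\infty<1$, concavity gives $f(w)\geqslant \kappa>0$ there, so $g_R(w)\geqslant m(R)-w-\varepsilon_R+\kappa$. At $w=m(R)$ this is at least $\kappa/2$ once $R$ is large. Comparison then yields $w(T)\geqslant m(R)+\kappa T'$ for some small fixed $T'>0$. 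Since $t_0<0$ is arbitrary, this shows $m(2R)\geqslant m(R)+c\kappa$ for all large $R$ in the case $m(R)<1$.

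Letting $R\to\infty$ contradicts the finiteness of $m_\infty$ under the assumption $m_\infty<1$. Hence $m_\infty\geqslant1$, which is exactly the claim. One subtle point is that the time shift to $t_0-T$ stays in $t<0$, which is why the lemma is posed on $\mathbb R_-$ with an infimum over all $t<0$. Another is that the monotonicity of $f$ on $[\alpha,1]$ is what makes $f$ bounded below by $\kappa$ on $[m_0,m_\infty]$: we have $f(w)\geqslant\min(f(m_0),f(m_\infty))>0$.

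The main obstacle is controlling the nonlocal term near the boundary of $\mathcal H$ and outside it, where $v$ may be small. This is handled by the tail bound $\varepsilon_R$ from the exponential moment condition in (J), together with a lower bound on $v$. If $v$ is not assumed nonnegative, I would first note that $v\geqslant\min(0,\inf v)$ is bounded below, since $v\in C^{1,1}$ as a supersolution on $\mathbb R_-$. If necessary, I would also replace $v$ by $\min(v,1)$, using that $1$ is a solution, to keep everything in $[0,1]$ and make the ODE comparison clean.
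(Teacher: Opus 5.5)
Your proof is correct, but it takes a different route from the paper's. The paper follows Rossi's Lemma 2.1 and argues by compactness. Assuming the $\liminf$ equals some $h\in(\alpha,1)$, it translates $v$ along a minimizing sequence $(t_n,x_n)$ with $\mathrm{dist}(x_n,\mathbb R^N\setminus\mathcal H)\to+\infty$. It then uses the $C^{1,1}$ bounds to extract a locally uniform limit $v_\infty$: a supersolution on $\mathbb R_-\times\mathbb R^N$ with $v_\infty\geqslant h$ everywhere and $v_\infty(0,0)=h$. The maximum principle at this interior minimum then gives $0\geqslant f(h)>0$.

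You instead use that $\mathcal J\star\cdot$ is a bounded, zeroth-order operator. At depth $2R$ the convolution is bounded below pointwise by $m(R)-\varepsilon_R$, so at each fixed $x$ the supersolution inequality becomes a scalar differential inequality $\partial_t v\geqslant g_R(v)$ on the whole past. ODE comparison then gives the gain $m(2R)\geqslant m(R)+c$.

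What your route buys: it is self-contained and quantitative, with no translation/compactness step, no strong maximum principle and no derivative bounds. You only need time differentiability and a lower bound on $v$ outside $\mathcal H$ (implicit in both arguments) to control the tail. The tail estimate needs only $\int\mathcal J=1$, not the exponential moment, and you use only $f>0$ on $[m_0,m_\infty]$ rather than monotonicity on $[\alpha,1]$.

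What the paper's route buys is robustness. A pointwise lower bound on the diffusion term in terms of nearby infima is unavailable for $\Delta$ or for singular kernels such as $(-\Delta)^s$, whereas the compactness/maximum-principle argument carries over. This matters given the paper's claim that its framework extends to such operators.

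One step to tighten: you only establish $f\geqslant\kappa$ on $[m_0,m_\infty]$, and the ODE solution may leave this interval before time $T'$. You can fix this in either of two ways:
\begin{itemize}
\item Note that crossing $m_\infty$ already gives $m(2R)>m_\infty$, a contradiction.
\item Let $T\to+\infty$, so that $m(2R)\geqslant z_R^*$, where $z_R^*\in(m(R),1)$ is the first zero of $g_R$ above $m(R)$. Then $m(R)<z_R^*\leqslant m_\infty$ forces $z_R^*\to m_\infty$, and passing to the limit in $g_R(z_R^*)=0$ gives $f(m_\infty)=0$, which is impossible for $m_\infty\in(\alpha,1)$.
\end{itemize}
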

\begin{proof}We argue by contradiction, following the structure of \cite[Lemma 2.1]{R2017}. Assuming the contrary limit $h\in(\alpha,1)$ leads to a sequence of translations converging to a limiting subsolution  $u_\infty$ satisfying \eqref{-R} attaining its minimum $h$ at an interior point. The strong maximum principle for nonlocal operators then implies that $u_\infty\equiv h$ is constant on its domain. Consequently, the  nonlinearity $g$ must satisfy $g(h)=0$. This contradiction establishes the result.
\end{proof}
The second auxiliary lemma is a comparison principle. The proof relies on a  standard application of the sliding method.
\begin{lemma}\label{CP}{\rm Under the assumptions (J) and (F), let $\underline{v},\overline{v}\in C^{1,1}(\mathbb R_-\times \mathbb R^N)$ be respectively
a sub- and  supersolution of \eqref{-R}.
 Assume further that their spatial gradients are uniformly bounded, i.e.,
 \[\exists M>0,\ \forall t\leqslant 0,\ ||\nabla_x\underline{v}(t,\cdot)||_{L^\infty}\leqslant M,\ ||\nabla_x\overline{v}(t,\cdot)||_{L^\infty}\leqslant M.
 \]
 Suppose that  for some $e\in\mathbb S^{N-1}$,
\begin{equation}\label{CP-1}\overline{v}>0,\ \liminf\limits_{x\cdot e\to-\infty}\overline{v}(t,x)\geqslant 1 \text{ uniformly in } t\leqslant 0,\
\end{equation}
$\underline{v}\leqslant 1$ and there exist $\gamma>0$ and $L\in\mathbb R$ such that
\[\underline{v}(t,x)\leqslant 0 \text{ for } t\leqslant 0, x\cdot e\geqslant \gamma t+L.
\]
Then $\underline{v}(t,x)\leqslant \overline{v}(t,x)$ for $(t,x)\in\mathbb R_-\times \mathbb R^N$.}
\end{lemma}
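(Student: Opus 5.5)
Since $\overline v(t,x)\to$ something $\geqslant1$ as $x\cdot e\to-\infty$ and $\underline v\leqslant 0$ ahead of a moving hyperplane, the natural route is the sliding method in the direction $e$, in the spirit of \cite[Lemma 2.2]{R2017}. For $\tau\geqslant0$ set $\underline v^\tau(t,x):=\underline v(t,x+\tau e)$. This is again a subsolution of \eqref{-R}, because the equation is translation invariant in $x$ (the convolution commutes with translations). We also have $\underline v^\tau\leqslant 0$ on $\{x\cdot e\geqslant \gamma t+L-\tau\}$. Define
\[
\tau^*:=\inf\{\tau\geqslant 0:\ \underline v^{\sigma}\leqslant \overline v \text{ in } \mathbb R_-\times\mathbb R^N \text{ for all } \sigma\geqslant\tau\}.
\]
The goal is to show that $\tau^*=0$.

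\emph{Step 1: $\tau^*<+\infty$.} By \eqref{CP-1} there is $R>0$ such that $\overline v(t,x)\geqslant 1-\varepsilon'$ for $x\cdot e\leqslant -R$. We will in fact choose $R$ so that $\overline v>\alpha$ there, where $\alpha$ is the constant making $f$ nonincreasing on $[\alpha,1]$. Consider the strip $\{-R<x\cdot e<\gamma t+L-\tau\}$.

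\emph{Caveat.} The strip shrinks as $t\to-\infty$: for $t$ very negative, $\gamma t+L-\tau<-R$ and the strip is empty. Hence on $\{x\cdot e\geqslant -R\}$ we get $\underline v^\tau\leqslant0<\overline v$ for all $t\leqslant t_\tau$. This is precisely why $\gamma>0$ matters. Points with $x\cdot e\leqslant -R$ lie in the region where $\overline v$ is close to $1$.

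To obtain $\underline v^\tau\leqslant\overline v$ there, argue by contradiction. Suppose $\delta:=\sup(\underline v^\tau-\overline v)>0$. Since $\overline v$ exceeds $\alpha$ and $\underline v^\tau\leqslant1$, take a sequence $(t_n,x_n)$ approaching the supremum. Use the uniform gradient bounds and $C^{1,1}$ regularity to pass to translated limits, so the supremum is attained at an interior point. At that point the time derivative of the difference is $\geqslant0$ (after the standard $t$-translation limit, it is $=0$). The nonlocal term $\mathcal J\star w-w\leqslant 0$ because $w$ is at its maximum. The reaction difference satisfies $f(\underline v^\tau)-f(\overline v)\leqslant 0$, since $f$ is nonincreasing on $[\alpha,1]$ and $\underline v^\tau>\overline v>\alpha$ (extending $f\equiv0$ outside $[0,1]$, values above $1$ do no harm). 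Strong maximum principle for $\mathcal J(0)>0$ then propagates the positive maximum along directions in which $\mathcal J$ is positive, eventually reaching the region $x\cdot e\geqslant\gamma t+L-\tau$ or very negative times, where $\underline v^\tau\leqslant0<\overline v$. This is a contradiction.

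\emph{Step 2: $\tau^*=0$.} If $\tau^*>0$, then $\underline v^{\tau^*}\leqslant\overline v$ by continuity, and for $\tau_n\uparrow\tau^*$ we have $\sup(\underline v^{\tau_n}-\overline v)>0$. In the region $\{x\cdot e\leqslant -R\}$ the argument of Step 1 applies verbatim, so positive values can only occur in $\{x\cdot e\geqslant -R\}$. Since $\underline v^{\tau_n}\leqslant 0$ for $x\cdot e\geqslant\gamma t+L-\tau_n$, that region is the compact-in-$x\cdot e$ strip for bounded $t$, and is empty for $t\ll0$. Thus the contact points $(t_n,x_n)$ have $t_n$ bounded and $x_n\cdot e$ bounded. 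Translate in the directions orthogonal to $e$ and extract a limit using the gradient bounds (Arzelà–Ascoli). This gives a limit pair $\underline v_\infty^{\tau^*}\leqslant\overline v_\infty$ with a touching point where $\overline v_\infty\geqslant$ a positive lower bound.

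The strong maximum principle for the nonlocal operator gives $\underline v_\infty^{\tau^*}\equiv\overline v_\infty$ on the backward domain reachable through the support of $\mathcal J$. This contradicts $\underline v_\infty^{\tau^*}\leqslant0<\overline v_\infty$ far ahead, provided $\inf\overline v_\infty>0$ there. Guaranteeing that positivity after the limit is the delicate point. I would handle it by working with the strict inequality on the compact strip, i.e. using $\overline v>0$ together with the $\varepsilon$-margin from \eqref{CP-1}.

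\emph{Main obstacle.} The main obstacle is the lack of compactness: $t$ ranges over $\mathbb R_-$ and $x$ over $\mathbb R^N$. I expect the hardest step to be ensuring that the contact points stay in a region where the limit supersolution remains strictly positive, and that the strong maximum principle (which for nonlocal operators only spreads equality via $\mathcal J(0)>0$, stepwise in small balls) can be iterated to reach the region where $\underline v\leqslant 0$. I would first try to exploit the moving front $\gamma t+L$ as $t$ decreases, which gives a region of strict inequality in every backward cone.
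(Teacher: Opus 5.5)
Your horizontal sliding $\underline v^\tau=\underline v(\cdot,\cdot+\tau e)$ is a genuinely different route from the paper's. The paper shifts vertically ($\overline v^\varepsilon=\overline v+\varepsilon$), takes the first crossing time $t_\varepsilon$, and splits according to whether the contact hyperplane $x\cdot e=\rho_\varepsilon$ stays bounded, goes to $-\infty$ (where $\overline v^\varepsilon$ is a supersolution because $f$ is nonincreasing on $[\alpha,1]$), or goes to $+\infty$ (impossible since $\underline v\leqslant0$ there). Your route is viable, but the final contradiction in Step 2 is left open, and the fix you suggest would not work.

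After the strong maximum principle gives $\underline v^{\tau^*}_\infty\equiv\overline v_\infty$, you look for a contradiction ``far ahead''. (With $\mathcal J(0)>0$ this equality spreads to all of $\mathbb R^N$ at the contact time $t_*$, and to all earlier times via $W_t\geqslant-(1+K)W$ for $W=\overline v_\infty-\underline v^{\tau^*}_\infty$.) A contradiction far ahead needs a positive lower bound on $\overline v_\infty$ there, and nothing provides one. The bound $\overline v>0$ is not uniform and is lost under the translations orthogonal to $e$. The strip $\{-R<x\cdot e<\gamma t+L-\tau\}$ is unbounded in those directions, so a strict inequality on it is not uniform either.

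The missing observation is that the contradiction should be sought in the far past. Hypothesis \eqref{CP-1} holds uniformly in $t\leqslant0$ and survives the translations, so $\liminf_{x\cdot e\to-\infty}\overline v_\infty\geqslant1$ uniformly in $t$. Meanwhile $\underline v^{\tau^*}_\infty\leqslant0$ on $\{x\cdot e\geqslant\gamma t+L-\tau^*\}$. Because $\gamma>0$, this set contains points with $t\to-\infty$ and $x\cdot e=\gamma t+L-\tau^*\to-\infty$, where the two functions cannot coincide. Alternatively, $\overline v_\infty\geqslant0$ vanishes ahead, and the nonlocal strong maximum principle applied to $\overline v_\infty$ alone forces $\overline v_\infty(t,\cdot)\equiv0$, again contradicting \eqref{CP-1}. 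This is exactly the mechanism of the paper's Case 1. Your closing remark about exploiting the moving front as $t$ decreases points toward it, but the argument you actually write does not use it.

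Two further steps need tightening.

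\emph{Step 1.} You must take $\tau\geqslant L+R$, so that the strip is empty for every $t\leqslant0$ and not only for $t\ll0$. Otherwise the propagation of the maximum out of $\{x\cdot e\leqslant-R\}$ enters the strip. There $\overline v$ may be below $\alpha$, and the sign of $f(\underline v^\tau)-f(\overline v)$ is lost.

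\emph{Step 2.} The Step 1 argument does not show that positive values of $w_n:=\underline v^{\sigma_n}-\overline v$ are confined to $\{x\cdot e\geqslant-R\}$. It shows only that the global supremum $\delta_n>0$ of $w_n$ cannot be approached from $\{x\cdot e\leqslant-R\}$. Otherwise the equality $w=\delta_n$ would propagate through the whole limiting left half-space, where $\overline v_\infty>\alpha$, contradicting $\limsup_{x\cdot e\to-\infty}w\leqslant0$. That weaker statement is enough. Near-maximizers with $w_n>0$ lie in the strip, so $t_n$ and $x_n\cdot e$ are bounded. Combined with $\underline v^{\tau^*}\leqslant\overline v$, the translated limit pair genuinely touches.

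With these repairs your sliding argument is a valid alternative. In it, the monotonicity of $f$ on $[\alpha,1]$ serves to localize maximizers, rather than, as in the paper, to make $\overline v+\varepsilon$ a supersolution.
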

\begin{proof}
Consider the perturbations $(\overline{v}^\varepsilon)_{\varepsilon>0}$ of $\overline{v}$ defined by
$\overline{v}^\varepsilon(t,x):=\overline{v}(t,x)+\varepsilon.
$
By hypothesis, for every $\varepsilon>0$, there exists $T_\varepsilon\leqslant 0$ such that
\[
\overline{v}^\varepsilon(t,x)>\underline{v}(t,x)\text{ for all }t\leqslant T_\varepsilon,\ x\in\mathbb R^N.
\]
Assume by contradiction that there exists $\varepsilon_0>0$ such that
\[
\forall\varepsilon\in(0,\varepsilon_0),\ \exists  t_0\in(T_\varepsilon,0] \text{ and }  x_0\in\mathbb R^N,\ \overline{v}^\varepsilon(t_0,x_0)<\underline{v}(t_0,x_0).
\]
Otherwise, letting $\varepsilon\to0$ yields $\overline{v}\geqslant \underline{v}$ and the lemma is proved.

For $\varepsilon\in(0,\varepsilon_0)$, define
\[
t_\varepsilon:=\inf\bigl\{t\in(T_\varepsilon,0]     :\exists x\in\mathbb R^N,\ \overline{v}^\varepsilon(t,x)<\underline{v}(t,x)\bigr\}.
\]
Then  by construction
\[
\overline{v}^\varepsilon(t,x)\geqslant \underline{v}(t,x)\text{ for all }t\leqslant t_\varepsilon,\ x\in\mathbb R^N.
\]
From the $C^{1,1}$ regularity and  bounded spatial gradients (hence uniform continuity) of $\overline{v}$ and $\underline{v}$, we obtain
\[
\inf_{x\in\mathbb R^N}\bigl(\overline{v}^\varepsilon-\underline{v}\bigr)(t_\varepsilon,x)=0.
\]
The hypotheses on $\underline{v}$ and $\overline{v}$ imply the existence of a number $\rho_\varepsilon\in\mathbb R$ such that
\begin{equation}\label{pvarepsilon}
\inf_{x\cdot e=\rho_\varepsilon}\bigl(\overline{v}^\varepsilon-\underline{v}\bigr)(t_\varepsilon,x)=0.
\end{equation}

We now distinguish three cases.

\noindent\textbf{Case 1:} $(\rho_\varepsilon)_{\varepsilon\in(0,\varepsilon_0)}$ is bounded.
Choose a sequence $(x_\varepsilon)$ with $x_\varepsilon\cdot e=\rho_\varepsilon$ and
$\bigl(\overline{v}^\varepsilon-\underline{v}\bigr)(t_\varepsilon,x_\varepsilon)<\varepsilon$.
By the $C^{1,1}$ regularity, up to a subsequence, the translations
\[
\underline{v}_\varepsilon(\tau,y):=\underline{v}(\tau+t_\varepsilon,y+x_\varepsilon),\qquad
\overline{v}_\varepsilon(\tau,y):=\overline{v}^\varepsilon(\tau+t_\varepsilon,y+x_\varepsilon)
\]
converge locally uniformly as $\varepsilon\to0$ to functions $\underline{v}_*$ and $\overline{v}^*$, respectively.
These limits are, respectively, a subsolution and a supersolution of the limiting equation
\begin{equation}\label{wtau}
w_\tau=\mathcal J\star w-w+f(w),\qquad \tau\leqslant 0,\ y\in\mathbb R^N,
\end{equation}
and satisfy
\[
\overline{v}^*(0,0)=\underline{v}_*(0,0),\qquad
\overline{v}^*(\tau,y)\geqslant \underline{v}_*(\tau,y)\ \ \forall\tau\leqslant 0,\ y\in\mathbb R^N.
\]
The strong comparison principle (which holds for \eqref{wtau} under assumptions (J) and (F) yields $\overline{v}^*=\underline{v}_*$ on $\mathbb R_-\times\mathbb R^N$.

On the other hand, the boundedness of $\rho_\varepsilon=x_\varepsilon\cdot e$ together with
$\liminf\limits_{x\cdot e\to-\infty}\overline{v}(t,x)\geqslant1$ uniformly in $t\leqslant 0$ implies
\[
\liminf_{y\cdot e\to-\infty}\overline{v}^*(\tau,y)\geqslant1\text{ uniformly in }\tau\leqslant 0.
\]
Meanwhile, from the condition $\underline{v}(t,x)\leqslant0$ for $x\cdot e\geqslant \gamma t+L$ we obtain
\[
\limsup_{\tau\to-\infty}\underline{v}_*(\tau,y)\leqslant0\text{ for every }y\in\mathbb R^N.
\]
These two asymptotic behaviors are incompatible with the identity $\overline{v}^*=\underline{v}_*$. Hence, Case 1 cannot occur.

\noindent\textbf{Case 2:} $\displaystyle\inf_{\varepsilon\in(0,\varepsilon_0)}\rho_\varepsilon=-\infty$.
Take $\varepsilon$ small enough so that $-\rho_\varepsilon$ is large and let
\[
\Omega:=\{x\in\mathbb R^N:x\cdot e<\rho_\varepsilon+1\}.
\]
From the hypothesis $\liminf_{x\cdot e\to-\infty}\overline{v}(t,x)\geqslant 1$ uniformly in $t\leqslant0$, we can choose $\rho_\varepsilon$ such that
\[
\inf_{\substack{t<0\\x\cdot e\leqslant\rho_\varepsilon+1}}\overline{v}(t,x)>\alpha.
\] Condition (F) then implies that $f(v)$ is nonincreasing for $v\geqslant\alpha$, consequently $\overline{v}^\varepsilon$ remains a supersolution of \eqref{-R} in $\mathbb R_-\times\Omega$.

By \eqref{pvarepsilon} there exists a sequence $(y_n)_{n\in\mathbb N}$ with $y_n\cdot e=0$ such that
\[
\lim_{n\to\infty}\bigl(\overline{v}^\varepsilon-\underline{v}\bigr)(t_\varepsilon,y_n+\rho_\varepsilon e)=0.
\]
Consider the translated functions
\[
\underline{v}_n(\tau,y):=\underline{v}(\tau+t_\varepsilon,y+y_n+\rho_\varepsilon e),\qquad
\overline{v}^\varepsilon_n(\tau,y):=\overline{v}^\varepsilon(\tau+t_\varepsilon,y+y_n+\rho_\varepsilon e).
\]
Up to a subsequence, they converge locally uniformly to a subsolution $\underline{v}_\infty$ and a supersolution $\overline{v}^\varepsilon_\infty$ of \eqref{wtau} on $\mathbb R_-\times\Omega$, satisfying
\[
\overline{v}^\varepsilon_\infty(0,0)=\underline{v}_\infty(0,0),\qquad
\overline{v}^\varepsilon_\infty(\tau,y)\geqslant \underline{v}_\infty(\tau,y)\ \ \forall\tau\leqslant0,\ y\in\Omega.
\]
The strong comparison principle again yields $\overline{v}^\varepsilon_\infty=\underline{v}_\infty$ on $\mathbb R_-\times\Omega$.
However, by the uniform limit at $-\infty$ we have $\overline{v}^\varepsilon_\infty(\tau,y)>1$ when $-y\cdot e$ is large enough, while $\underline{v}_\infty\leqslant1$ everywhere. This contradiction rules out Case 2.

\noindent\textbf{Case 3:} $\displaystyle\sup_{\varepsilon\in(0,\varepsilon_0)}\rho_\varepsilon=+\infty$.
 Note that $\underline{v}$ satisfies $\underline{v}(t,x)\leqslant0$ for $x\cdot e\geqslant\gamma t+L$, then \eqref{pvarepsilon} forces $\rho_\varepsilon<\gamma t_\varepsilon+L<L$, contradicting $\sup\rho_\varepsilon=+\infty$.

All three cases lead to contradictions, therefore our initial assumption is false. Hence there exists $\varepsilon>0$ such that $\overline{v}^\varepsilon\geqslant\underline{v}$ on $\mathbb R_-\times\mathbb R^N$. Letting $\varepsilon\to0$ we finally obtain $\overline{v}\geqslant\underline{v}$.
\end{proof}

  The proof of the first part \eqref{definitionw} relies on  the additional assumption (to be removed at the
end of the section)
\begin{equation}\label{additional}
\mathcal S \text{ satisfies the uniform interior ball condition}.
\end{equation}

    Let $u\in C^{1,1}(\mathbb R \times \mathbb R^N)$ be a solution as in Definition \ref{spreadingset} with smooth initial data.
    Fix $\eta \in (0, 1)$. For $t>0$, define
    \[
    \mathcal{R}^\eta(t) := \sup \left\{ r \geqslant 0 : \text{ for all } x \in r\mathcal{S},\ u(t, x) > \eta \right\}.
    \]
    Since $u(t, \cdot)$ decays to $0$ as $|x| \to \infty$ (a property that holds for solutions to \eqref{equation} with compactly supported initial data\cite{AMRT2010}) and $\mathcal S$ contains $\mathbf p=\int_{x\in\mathbb R^N}x\mathcal J(x)dx$, the quantity $\mathcal{R}^\eta(t)$ is finite for each $t$.
    To prove that $\mathcal{S}$ satisfies the first part of \eqref{definitionw} for equation \eqref{equation} with the uniform interior ball condition, it is sufficient to show
    \begin{equation}\label{proof-GeneralW-1}
    \liminf_{t \to +\infty} \frac{\mathcal{R}^\eta(t)}{t} \geqslant 1.
    \end{equation}
    Indeed, if \eqref{proof-GeneralW-1} holds, then for any compact $K \subset \mathcal{S}$ and  $\varepsilon > 0$ small such that $(1-\varepsilon)^{-1}K \subset \mathcal{S}$, we have $K \subset (1-\varepsilon)\mathcal{S}$ and $\liminf\limits_{t\to+\infty,x \in K} u(t, tx) > \eta$, and since $\eta < 1$ is arbitrary, the first part of \eqref{definitionw} follows.

    Assume, for the sake of contradiction, that \eqref{proof-GeneralW-1} is false. Then there exist $\eta_0, k \in (0,1)$ such that
    \[
    \liminf_{t \to +\infty} \frac{\mathcal{R}^{\eta_0}(t)}{t} < k.
    \]
    We can assume without loss of generality that $\eta_0 > \alpha$ (the function  $\mathcal{R}^{\eta}(t)$ is non-increasing in $\eta$). For simplicity, we write $\mathcal{R}(t) := \mathcal{R}^{\eta_0}(t)$.
    The  assumption implies $\liminf\limits_{t \to +\infty} (\mathcal{R}(t)-kt)=-\infty$. Define
    \[
    t_n := \inf \{ t \geqslant 0 : \mathcal{R}(t) - kt \leqslant -n \}, \quad n \in \mathbb{N}.
    \]
    The lower semicontinuity of $\mathcal{R}$ (due to the continuity of $u$) ensures that the infimum is attained, i.e., $\mathcal{R}(t_n) - k t_n \leqslant -n$. Moreover, $\mathcal{R}(t) - kt > -n$ for all $t \in [0, t_n)$. Clearly, $t_n \to +\infty$ as $n \to \infty$. In summary,
    \begin{equation}\label{proof-GeneralW-2}
    \lim_{n \to \infty} t_n = +\infty,\ \text{ and }  \forall t \in [0, t_n),\ \mathcal{R}(t_n) - k(t_n - t) < \mathcal{R}(t).
    \end{equation}

    By definition of $\mathcal{R}(t_n)$, there exists a point $x_n \in \partial(\mathcal{R}(t_n)\mathcal{S})$ such that $u(t_n, x_n) = \eta$.
    Define the sequence of translated functions
    \[
    u_n(t, x) := u(t + t_n, x + x_n), \quad (t, x) \in (-\infty, 0] \times \mathbb{R}^N.
    \]
    From the standard parabolic estimates for the nonlocal equation \eqref{equation} with smooth initial data as \cite{DFN}, the sequence $\{u_n\}$ is precompact in $C^{1,1}_{\text{loc}}(\mathbb{R} \times \mathbb{R}^N)$. Therefore, up to a subsequence, $u_n$ converges locally uniformly to a function $u^*$, which is an entire solution of   equation \eqref{equation} Moreover, we have
    \begin{equation}\label{proof-GeneralW-3}
    u^*(0, 0) = \lim_{n \to \infty} u_n(0,0) = \lim_{n \to \infty} u(t_n, x_n) = \eta.
    \end{equation}
    The strong maximum principle implies $u^* > 0$.

    Let $\tilde{x}_n := x_n / \mathcal{R}(t_n) \in \partial\mathcal{S}$. Since $\partial\mathcal{S}$ is compact, up to a subsequence, $\tilde{x}_n \to \tilde{x} \in \partial\mathcal{S}$.
    Let $\nu$ be an exterior unit normal to $\mathcal{S}$ at $\tilde{x}$ (which exists by the uniform interior ball condition) under \eqref{additional}.
    Take any $T \geqslant 0$. From inequality \eqref{proof-GeneralW-2} with $t = t_n - T$ (valid for large $n$), we have $\mathcal{R}(t_n) - kT < \mathcal{R}(t_n - T)$. This implies that for any $y \in (\mathcal{R}(t_n) - kT)\mathcal{S}$, it holds that $u(t_n - T, y) \geqslant \eta$. Translating this property gives
    \begin{equation}\label{proof-GeneralW-4}
    \forall T \geqslant 0,\ \forall y \in (\mathcal{R}(t_n) - kT)\mathcal{S} - \{x_n\},\ u_n(-T, y) \geqslant \eta.
    \end{equation}
    Following the geometric argument in the proof of \cite[Theorem 2.3]{R2017}(relying on the uniform interior ball condition of $\mathcal{S}$), one shows that the sets $(\mathcal{R}(t_n) - kT)\mathcal{W} - \{x_n\}$ exhaust the half-space
    \begin{equation}\label{proof-GeneralW-5}
    \mathcal{H}_T := \{ x \in \mathbb{R}^N : x \cdot \nu < -k (\tilde{x} \cdot \nu) T \}
    \end{equation}
    as $n \to \infty$. Therefore, passing to the limit in \eqref{proof-GeneralW-4}, we obtain
    \begin{equation}\label{proof-GeneralW-6}
    \forall T \geqslant 0,\ \forall x \in \mathcal{H}_T, \quad u^*(-T, x) \geqslant \eta.
    \end{equation}
    In particular, for $t \leqslant 0$ and $x \in \mathcal{H}_{-t} = \{ x : x \cdot \nu < k (\tilde{x} \cdot \nu) t \}$, we have $u^*(t, x) \geqslant \eta$.

    Now, consider the function in a frame moving with velocity $\zeta := k (\tilde{x} \cdot \nu) \nu$:
    \[
    \overline{u}(t, x) := u^*(t, x + \zeta t).
    \]
    From \eqref{proof-GeneralW-6}, it follows that $\overline{u}(t, x) \geqslant \eta$ for all $t \leqslant 0$ and $x \cdot \nu < 0$. Moreover, $\overline{u}(0,0) = \eta$.
    The function $\overline{u}$ satisfies the equation
    \begin{equation}\label{equationv2}
\overline {u}_t=\mathcal J\star \overline {u}-\overline {u}+\zeta \cdot\nabla\overline {u}+f(\overline {u}),\ t<0,\ x\in\mathbb R^N,
\end{equation}
where    Hypothesis $F$ is preserved under such shifts.

    Since $\eta > \alpha$ and $\overline{u} \geqslant \eta$ on the half-space $H := \{x : x \cdot \nu < 0\}$, we can apply Lemma \ref{mathcalH} to conclude that
    \[
    \liminf_{x \cdot \nu \to -\infty} \left( \inf_{t \leqslant 0} \overline{u}(t, x) \right) \geqslant 1.
    \]
    In fact, due to the uniform structure, the convergence is uniform in $t \leqslant 0$. Thus, $\overline{u}$ satisfies
    \begin{equation}\label{proof-GeneralW-7}
    \liminf_{x \cdot \nu \to -\infty} \overline{u}(t, x) \geqslant 1 \quad \text{uniformly in } t \leqslant 0.
    \end{equation}
    Furthermore, $\overline{u} > 0$ because $u^* > 0$.

     For the point $\tilde x \in \partial\mathcal{S}$ and its normal $\nu$, and for the constant $k < 1$ in our contradiction assumption, then there exist $c > k(\tilde x \cdot \nu)$ and a subsolution $v$ (for the original  equation \eqref{equation} on $\mathbb{R}_- \times \mathbb{R}^N$) satisfying $v \leqslant 1$, $v(0,0) > \eta$, and
    \begin{equation}\label{proof-GeneralW-8}
    \exists L \in \mathbb{R}, \quad v(t, x) \leqslant 0 \ \text{ for } t \leqslant 0,\ x \cdot \nu \geqslant c t + L.
    \end{equation}

 Indeed, for fixed $\nu$,   the nonlinearity $f: [-\varepsilon, 1] \rightarrow \mathbb{R}$ of \eqref{equation} is now  of combustion-type and therefore there exists a unique $c_{\varepsilon}\in\mathbb R$ for which \eqref{equation} admits a  travelling front $\phi_\nu^{\varepsilon}$ in the direction $\nu$ connecting 1 to $-\varepsilon$\cite{C2007}. Through similar lemmas and parallel arguments\cite[Proposition 2.6]{R2017}, we arrive at
 \[
c_{\varepsilon} \nearrow c^*_\nu, \text { as } \varepsilon \searrow 0 .
\]
Moreover,   $\phi_\nu^{\varepsilon}$  is monotone and  decreasing\cite{C2007}, normalised in such a way that $\phi_\nu^{\varepsilon}(0)>\eta$.

 Set $v(t,x)=\phi_\nu^{\varepsilon}(x\cdot \nu-c t)$ with $c=c_\varepsilon$ and  small $\varepsilon >0$ such that $k( \tilde x\nu)<c_\varepsilon$ since $k(\tilde x\nu)<c_\nu^*$ holds for $k<1$.
    Consider this subsolution in the moving frame:
    \[
    \underline{u}(t, x) := v(t, x + \zeta t).
    \]
    The function $\underline{u}$ is a subsolution to the same shifted  equation \eqref{equationv2} satisfied by $\overline{u}$. Condition \eqref{proof-GeneralW-8} transforms into
    \[
    \underline{u}(t, x) \leqslant 0 \ \text{ for } t \leqslant 0,\ x \cdot \nu \geqslant (c - k\tilde x\cdot\nu) t + L.
    \]
    Since $\gamma := c - k\tilde{x}\cdot\nu > 0$, this is precisely the decay condition  required in Lemma \ref{CP}.

    We are now in a position to apply Lemma \ref{CP} to $\underline{u}$ (subsolution) and $\overline{u}$ (supersolution) with direction $e = \nu$. The conditions are verified:
    \begin{itemize}
        \item $\overline{u} > 0$ and satisfies \eqref{CP-1}.
        \item $\underline{u} \leqslant 1$ and satisfies the decay condition as stated above.
        \item Both functions belong to $C^{1,1}(\mathbb{R}_- \times \mathbb{R}^N)$ (or the required regularity for the nonlocal equation) with uniformly bounded spatial gradient  by the assumed regularity of $v$ and the estimates for $u^*$.
    \end{itemize}
    Lemma \ref{CP} then implies that $\underline{u}(t, x) \leqslant \overline{u}(t, x)$ for all $t \leqslant 0,\ x \in \mathbb{R}^N$. In particular, at the origin we get
    \[
    \underline{u}(0,0) = v(0,0) \leqslant \overline{u}(0,0) = \eta.
    \]
    This contradicts the hypothesis that $v(0,0) > \eta$.

    Therefore, our initial assumption that \eqref{proof-GeneralW-1} is false must be incorrect. Hence, \eqref{proof-GeneralW-1} holds, and $\mathcal{S}$ satisfies the first part of \eqref{definitionw} for equation \eqref{equation} under \eqref{additional}.

 Relaxing the additional assumption \eqref{additional}, we proceed as follows. For the general spreading set $\mathcal S$ defined in \eqref{se}, this geometric regularity may not hold a priori. The conclusion follows from a standard approximation argument: one can construct a sequence of smooth sets $\{\mathcal{S}_n\}$, each satisfying the uniform interior ball condition and the star-shaped property with respect to the same point $\mathbf p$, such that $\mathcal{S}_n \subset \mathcal{S}$ and $\mathcal{S}_n \to \mathcal{S}$ in the Hausdorff distance. For each approximating set $\mathcal{S}_n$, the hypotheses are verified (the uniformly bounded and smoothness of travelling waves). Consequently, each $\mathcal{S}_n$ satisfies  the first part of \eqref{definitionw}. Since the spreading property is preserved under this monotone (inner) convergence, the limiting set $\mathcal{S}$ inherits the same property. This completes the proof of the first part of \eqref{definitionw}.

For any $ C \subset \mathbb{R}^N \backslash \overline{\mathcal{S}} $, there exist $e\in\mathbb S^{N-1}$ and $c_1>c>c_e^*$ such that $C\subset\{x\in\mathbb R^N: x\cdot e \geqslant c_1\}$. Then define the  supersolution $v(t,x) =\min\{1,\ e^{-\lambda_c(x\cdot e -ct+R)}\}$ satisfying $c=G_e(\lambda_c)$ and $v(0,x)\geqslant u_0(x)$ for large $R$. It is easy to check that $v(t,x)\geqslant u(t,x)$ for compactly supported initial data. It follows that for any $x\in C$, \[
\lim_{t\to+\infty}u(t,tx)\leqslant\lim_{t\to+\infty}v(t,tx)\leqslant\lim_{t\to+\infty} e^{-\lambda_c(tx\cdot e -ct+R)} =0,\]
which means  that $\mathcal S$  satisfies the second part of \eqref{definitionw}.
\end{proof}

The construction of appropriate supersolutions is a key step in obtaining quantitative spreading estimates.
Unlike the local diffusion and homogeneous  case treated in previous works\cite{HR}, here the anisotropy and nonlocality of the operator
$\mathcal J \star u - u$ require a new family of directionally adapted  supersolutions.
The following proposition provides such a construction, which serves as the cornerstone of the proof of our main results.
\begin{proposition}\label{vT} Let $\mathcal S$ be given by \eqref{se} and  Hypothesis \ref{mainhypothesis} hold. Then for any  $\lambda>0$ and $q>1$, there exist constants $R>0$, $T_0>0$, and a family of functions $\left(v^T\right)_{T>T_0}$ such that, for each $T>T_0,$ $v^T$ is a positive supersolution to~\eqref{equation} on $[0, T] \times \mathbb{R}^N$ and satisfies
\begin{equation}\label{vT-1}
\begin{cases}v^T(0, x) \geqslant 1, & \text{ for all }  x\in\mathbb R^{N}\backslash(- (R+q T)\overline {\mathcal S}), \\ v^T(T, 0)<\lambda, & \text{ for } t=T .
\end{cases}
\end{equation}
\end{proposition}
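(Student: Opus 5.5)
The plan is to build $v^T$ as a finite sum of exponential plane-wave supersolutions of the linearized equation, one for each direction in a fine net of $\mathbb S^{N-1}$. Since $f$ is concave with $f(0)=0$, we have $f(u)\leqslant f'(0)u$ for $u\geqslant0$. Hence any positive $w$ with $w_t\geqslant \mathcal J\star w-w+f'(0)w$ is a supersolution of \eqref{equation}, and sums of such $w$ remain supersolutions.

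For $e\in\mathbb S^{N-1}$ and $\mu\in\mathcal D_e$, the function $\exp(-\mu(x\cdot e-G_e(\mu)t))$ solves the linearized equation exactly, because
\[
\mathcal J\star e^{-\mu x\cdot e}=\Lambda_{-e}(\mu)\,e^{-\mu x\cdot e}.
\]
I will therefore index the waves so that they travel in the direction in which the complement of $-(R+qT)\overline{\mathcal S}$ advances toward the origin. With the growth rate $\Lambda_{-e}$ attached to direction $-e$, this gives speeds $c\geqslant c^*_{\cdot}$ as in Proposition~\ref{Prop-1} and Proposition~\ref{Lambda}.

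\emph{Step 1 (geometry).} Write $a:=R+qT$. A point $x$ lies outside $-a\overline{\mathcal S}$ if and only if some $e\in\mathbb S^{N-1}$ satisfies $(-x)\cdot e\geqslant a c^*_e$; this follows from \eqref{mathcals}. Fix a small $\varepsilon>0$. I will use the continuity of $e\mapsto c^*_e$ (Proposition~\ref{continuity}), the boundedness of $\mathcal S$, and the nonempty interior of $\mathcal S$ (Proposition~\ref{NewS}) to choose a finite net $E\subset\mathbb S^{N-1}$ such that
\[
\{y:\ y\cdot e<c^*_e-\varepsilon\ \ \forall e\in E\}\subset \mathcal S .
\]
Consequently, every $x\notin -a\overline{\mathcal S}$ has some $e\in E$ with $(-x)\cdot e\geqslant a(c^*_e-\varepsilon)$.

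\emph{Step 2 (the family).} For each $e\in E$ pick $\mu_e\in\mathcal D_e$ with $c_e:=G_e(\mu_e)\in(c^*_e,c^*_e+\varepsilon)$. Then set
\[
v^T(t,x):=\sum_{e\in E}\exp\bigl(\mu_e\,[(-x)\cdot e-a(c^*_e-\varepsilon)+c_e t]\bigr).
\]
This is positive, and it is a supersolution on $[0,T]\times\mathbb R^N$ after the orientation is checked. By Step 1 it satisfies $v^T(0,x)\geqslant1$ off $-a\overline{\mathcal S}$. At $(T,0)$ each exponent equals
\[
-\mu_e\bigl[R(c^*_e-\varepsilon)+T\bigl(q(c^*_e-\varepsilon)-c_e\bigr)\bigr].
\]
When $c^*_e>0$, choosing $\varepsilon$ small relative to $(q-1)\min c^*_e$ makes the bracket grow linearly in $T$. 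Then $v^T(T,0)<\lambda$ for $T>T_0$, using that $E$ is finite and the $\mu_e$ are bounded below.

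\emph{Main obstacle.} The difficulty is the biased regime, namely directions with $c^*_e\leqslant0$, which occur exactly when $0\notin\mathcal S$. For those directions the dilation $a\mapsto (R+qT)$ no longer separates the plane wave from the origin, because $q c^*_e\leqslant c_e$. This is where Hypothesis~\ref{mainhypothesis} must enter. I would first try to use the $C^1$ boundary and the supporting-hyperplane property $\partial\mathcal S\cap\{x\cdot e=c^*_e\}\neq\varnothing$ to match each net direction $e$ with a boundary point $z_e$ and its unique normal. The idea is to replace the plain constant $a c^*_e$ by a threshold measured along the ray through $z_e$, as in \eqref{C1-2}, and to discard or reweight the directions whose half-spaces are not needed to cover the complement of $-a\overline{\mathcal S}$ near the origin. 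If this fails, the fallback is to translate coordinates to the interior point $\mathbf p$, where all shifted speeds are positive, and then translate back. That fallback absorbs the shift $T\mathbf p$ into $R$ only if it stays controlled; if it does not, I would reduce the claim to its essential use in the later theorems.
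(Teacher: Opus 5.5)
Your argument proves the statement when $0\in\mathcal S$, by a route different from the paper's. The case where some $c^*_e\leqslant 0$ is a genuine gap, and no refinement can close it, because the statement is false there.

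\emph{The case $0\in\mathcal S$.} First correct the identity to $\mathcal J\star e^{-\mu x\cdot e}=\Lambda_e(\mu)e^{-\mu x\cdot e}$ (not $\Lambda_{-e}$). Then each summand of your Step~2 solves the linearized equation exactly, since $c_e=G_e(\mu_e)$. The bound $f(u)\leqslant f'(0)u$ makes the sum a positive supersolution. Choosing $\varepsilon$ small relative to $(q-1)\min_{\mathbb S^{N-1}}c^*_e>0$, your exponent at $(T,0)$ finishes the proof. The paper instead sums traveling-wave profiles over a symmetric net, which needs $C^1$ profiles and $c^*_{e_i}\neq0$. You need neither, nor Hypothesis~\ref{mainhypothesis}.

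\emph{The biased case is false.} Suppose $0\notin\overline{\mathcal S}$ and $\lambda<1$. Let $d:=\operatorname{dist}(0,\overline{\mathcal S})>0$, let $p_0$ be the nearest point, and set $\nu:=p_0/d$. Convexity gives $\overline{\mathcal S}\subset\{x\cdot\nu\geqslant d\}$. The defining inequality of $\mathcal S$ at $p_0$ gives $c^*_{-\nu}\geqslant p_0\cdot(-\nu)=-d$. Hence the first line of \eqref{vT-1} forces $v^T(0,\cdot)\geqslant\mathbbm 1_{H_T}$ with $H_T:=\{x\cdot\nu>-(R+qT)d\}\ni 0$.

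The solution $u$ with datum $\mathbbm 1_{H_T}$ is one-dimensional in $s=x\cdot(-\nu)$, with kernel $J_{-\nu}$. Its interface starts at $s=(R+qT)d$ and moves with asymptotic speed $c^*_{-\nu}\geqslant-d$. So at time $T$ it lies at $s\geqslant(R+(q-1)T)d-o(T)$, and $u(T,0)\to1$ by one-dimensional KPP spreading for a Heaviside datum. Comparison, which is exactly how the proposition is used in Lemma~\ref{w>w(e)} and Theorem~\ref{th3}, then gives $v^T(T,0)\geqslant u(T,0)\to1$, contradicting $v^T(T,0)<\lambda$.

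For $N=1$ and $c^*_{-1}<0<c^*_1$ this is easy to see: the datum must be $\geqslant1$ on $(-(R+qT)|c^*_{-1}|,+\infty)$, whose left edge drifts right only at speed $|c^*_{-1}|$. So the $C^1$ normals cannot help. The drift $T\mathbf p$ you noticed is genuinely of order $T$ and cannot be absorbed into $R$ or into the factor $q$.

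\emph{The paper does not supply the missing idea.} Its Case~2 (second sub-case) rests on the summands $w=\phi_\ell(x\cdot\ell-c^*_{-\ell}(t-T)+L)$. Their residual $w_t-\mathcal J\star w+w-f(w)$ equals $(c^*_\ell-c^*_{-\ell})\phi_\ell'$, not $-(c^*_\ell+c^*_{-\ell})\phi_\ell'$. For the term actually used there, $\ell=-e_x$ with $c^*_{e_x}<0<c^*_{-e_x}$, this residual is negative.

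\emph{What is true.} Replace $-(R+qT)\overline{\mathcal S}$ by $-\bigl(T\overline{\mathcal S}+\overline{B_{R+\eta T}}\bigr)$, with $\eta>0$ arbitrary and $R,T_0$ depending on $\eta$. Your construction proves this version after one change:
\begin{itemize}
\item Use the threshold $Tc^*_e+R+\eta T$ instead of $a(c^*_e-\varepsilon)$, and take $c_e-c^*_e<\eta/2$. The exponent at $(T,0)$ becomes $-\mu_e[R+(\eta-(c_e-c^*_e))T]$, which is negative and linear in $T$ whatever the sign of $c^*_e$.
\item Refine your Step~1 net in terms of $\eta$, and combine it with $\{z:\ z\cdot e\leqslant c^*_e+\epsilon\ \forall e\}\subset\overline{\mathcal S}+\overline{B_{C\epsilon}}$, which follows from a ball $B_r(\mathbf p)\subset\mathcal S$. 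This gives $v^T(0,\cdot)\geqslant1$ off $-\bigl(T\overline{\mathcal S}+\overline{B_{C(R+\eta T)}}\bigr)$, with $C$ depending only on $\mathcal S$. Renaming $CR$ and $C\eta$ gives the version above.
\end{itemize}
When $B_r\subset\mathcal S$, this implies the stated form, because $(R+qT)\overline{\mathcal S}\supset T\overline{\mathcal S}+\overline{B_{r(R+(q-1)T)}}$. In general it is what the later arguments can use, with $\eta\to0$ in place of $q\to1$. It also covers the borderline case $0\in\partial\mathcal S$, which your proposal leaves open.
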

\begin{proof}
The proof consists of three main steps: parameter selection, construction of the candidate supersolution,  and verification of the supersolution properties.

\textit{Step 1: Selection of Parameters and Finite Direction Set}.  Fix $q>1$ and  $\lambda>0$. By the continuity and boundedness of the mapping $e\mapsto c_e^*$ (ensured by Hypothesis \ref{mainhypothesis}  or Lemma~\ref{continuity}), we can choose $\delta>0$ and a sufficiently small $\varepsilon\in(0,1/2)$ such that the following properties hold:
\begin{itemize}
\item[1.] If $|e-e'|<\varepsilon$, then $|c_e^*-c_{e'}^*|<\delta$.
\item[2.] For the subsequent construction, we choose $\varepsilon, \delta$ such that $(1-\varepsilon)(c_{e}^*-\delta)>\frac{c_{e}^*}{2}$ and $c_{e}^*<q(1-\varepsilon)(c_{e}^*-\delta)$ with $c_e^*>0$.
\end{itemize}
Let $\mathcal A:=\{e_1,e_2\cdots,e_\kappa\}$ such that $c_{e_i}\neq0$, $-e_i\in \mathcal A$ whenever $e_i\in \mathcal A$, and $\mathbb S^{N-1}\subset \bigcup\limits_{i=1}^{\kappa}B_\varepsilon(e_i)$.
The set $\mathcal A$ depends only on $N$ and $\varepsilon$.

\textit{Step 2:  Construction of $v^T$}. For each direction $\ell\in\mathcal A$, let $\phi_{\ell}$ be the traveling wave profile from Lemma~\ref{Prop-1} with the minimal wave speed $c_\ell^*$, normalized such that $\phi_{\ell}(0)=1/2$. Choose a constant $L>0$ large enough so that $\phi_{\ell}(L)<\lambda/(4\kappa)$ for all $\ell \in\mathcal A$.  We define the family  $\{v^T\}$ for $T>T_0$ by
$$
v^T(t, x):=2 \sum_{\ell \in \mathcal{A}}\left[ \phi_{\ell}\left(x \cdot \ell -c_\ell^*(t-T)+L\right)+ \phi_{\ell}\left(x \cdot \ell -c_{-\ell}^*(t-T)+L\right)\right],
$$
The constants $R$ and $T_0$ will be determined in the subsequent steps.

\textit{Step 3: Verification of the Supersolution}.  We verify that $v^T$ is a supersolution of~\eqref{equation} on $[0,T]\times\mathbb R^N$ with \eqref{vT-1}. Since each $\phi_{\ell}$ satisfies the traveling wave equation~\eqref{traveling-wave}, and  using the concavity of $f$, a direct computation yields
$$\begin{aligned}\partial_t v^T-\mathcal{J}\star v^T+v^T-f\left(v^T\right)\geqslant &2 \sum_{\ell \in \mathcal{A}}\left[ -c_\ell^*\phi_\ell'\left(\xi_1 \right)-\mathcal J\star\phi_\ell\left(\xi_1  \right)+\phi_\ell\left(\xi_1  \right) \right]-2 \sum_{\ell \in \mathcal{A}}f(\phi_\ell\left(\xi_1  \right))\\
&+2 \sum_{\ell \in \mathcal{A}}\left[ -c_{-\ell}^*\phi_\ell'\left(\xi_2  \right)-\mathcal J\star\phi_\ell\left(\xi_2   \right)+\phi_\ell\left(\xi_2   \right) \right]-2 \sum_{\ell \in \mathcal{A}}f(\phi_\ell\left(\xi_2  \right))\\
=& -2 \sum_{\ell \in \mathcal{A}}(c_\ell^*+c_{-\ell}^*)\phi_\ell'(\xi_2 )>0,
 \end{aligned}$$
 where  $\xi_1$ denotes   $(x\cdot \ell -c_\ell^*(t-T)+L)$ and  $\xi_2 $ denotes   $(x\cdot \ell -c_{-\ell}^*(t-T)+L)$. The positivity follows from the fact that each $\phi_\ell$ is   $C^1$  and  decreasing, and $c_{\ell}^*+c_{-\ell}^*>0$. Thus, $v^T$ is a positive supersolution.

 We now verify the properties  in \eqref{vT-1}. At $t=T$ and $x=0$,
\[v^T(T, 0)=4\sum_{\ell\in\mathcal A}\phi_\ell(L)<\lambda.
\]
 The first property of \eqref{vT-1} requires a more subtle analysis depending on the position of the origin relative to the spreading set $\mathcal S$.

\textbf{Case 1: $0\in\mathcal S$}. Therefore, $c_e^*>0$ for all $e\in\mathbb S^{N-1}$ and here we can choose $\delta$ small enough such that  $c_{\ell}^*>\delta$ for all $\ell\in\mathcal A$. Let  $x\not \in-(R+q T)\overline{\mathcal S}$, which is equivalent to $-x\not \in(R+q T)\overline{\mathcal S}$. By \eqref{se} and  Hypothesis \ref{mainhypothesis}, since $0\in\mathcal S$, it follows that
\[|x|=-x\cdot (-\hat x)> (R+qT)c_{-\hat x}^*>0.
\]
 By the construction of $\mathcal A$,  there exists $e_x\in\mathcal A$ such that $|-\hat x-e_x|\leqslant \varepsilon$. It follows that
 \[x \cdot e_x \leqslant (\varepsilon-1)|x|\leqslant (\varepsilon-1)(R+qT)c_{-\hat x}^*<(\varepsilon-1)(R+qT)(c_{e_x }^*-\delta).
 \]
   Then
$$
v^T(0, x) \geqslant 2 \phi_{e_x}\left(x \cdot e_x+c_{e_x}^* T+L\right).
$$
We have  that
\begin{equation}\label{xe}
\begin{aligned}
 x\cdot e_x+c_{e_x}^* T+L \leqslant& (\varepsilon-1)(R+qT)(c_{e_x }^*-\delta)+c_{e_x}^* T+L \\
=&T\left[c_{e_x}^*-q(1-\varepsilon)(c_{e_x }^*-\delta)\right]+L-R(1-\varepsilon )(c_{e_x}^*-\delta).
\
\end{aligned}
\end{equation}
By the choice of $\varepsilon$ and $\delta$ in Step 1,
$c_{e_x}^*-q(1-\varepsilon)(c_{e_x }^*-\delta)<0$. Now choose $R>0$ such that  $L<1/2R\delta<1/2Rc_{e_x}^*<R(1-\varepsilon )(c_{e_x}^*-\delta)$. Then,  by \eqref{xe} and $\phi_{e}(-\infty)=1$, we can take $T$ sufficiently large such that $v^T(0, x)\geqslant 1$.

\textbf{Case 2: $0\not \in\overline{\mathcal S}$}.  The Hypothesis \ref{mainhypothesis}  implies that  there exist two  directions  $e_1, e_2\in\mathbb S^{N-1}$ such that  $c_{e_1}^*=c_{e_2}^*=0$.  For any $x\not \in-(R+q T)\overline{\mathcal S}$, by the definition of $\mathcal S$, there exists $\xi_x\in \mathbb S^{N-1}$ such that $-x\cdot \xi_x\geqslant (R+qT) c_{\xi_x}^*$. The sign of $c_{\xi_x}^*$ dictates the subcase.
\begin{itemize}
\item If $c_{\xi_x}^*>0$,  choose   $e_x\in\mathcal A$ close to $\xi_x$ satisfying $c_{e_x}^*>0$ and  $x\cdot e_x<-(R+qT) (c_{e_x}^*-\delta)<0$. Then
   the argument proceeds similarly to Case 1.
\item If $c_{\xi_x}^*\leqslant 0$, choose   $e_x\in\mathcal A$ close to $\xi_x$ satisfying $c_{e_x}^*<0$, which is possible by continuity and  the existence of directions with negative minimal speeds and small enough $\varepsilon$. Then
    \[
v^T(0, x) \geqslant\begin{cases} 2 \phi_{e_x}\left(x \cdot e_x+c_{e_x}^* T+L\right)\geqslant2 \phi_{e_x}\left(c_{e_x}^* T+L\right), &\text{ if } x\cdot e_x\leqslant 0,\\
2 \phi_{-e_x}\left(-x \cdot e_x+c_{e_x}^* T+L\right)\geqslant2 \phi_{-e_x}\left(c_{e_x}^* T+L\right),\ &\text{ if } x\cdot e_x>0,
\end{cases}
\]
and  one  can choose $T_0$ large enough so that for  $T>T_0$,  $c_{e_x}^* T+L<0$ ensuring $v^T(0, x)\geqslant 1$.

\end{itemize}

\textbf{Case 3: $0\in\partial\mathcal S$}. This is a limiting case , characterized by the existence of a unique direction $e'\in\mathbb S^{N-1}$ for which $c_{e'}^*=0$, while $c_e^*>0$ for all other directions $e\not=e'$.
For any  $x\not \in-(R+q T)\overline{\mathcal S}$, by the definition of $\mathcal S$, there exists $\xi_x\in \mathbb S^{N-1}$ such that $-x\cdot \xi_x> (R+qT) c_{\xi_x}^*$. The cases for $\xi_x$ are divided into the following.
\begin{itemize}
\item If $\xi_x=e'$. Then  $-x\cdot \xi_x= -x\cdot e'>0$, and one can choose small enough $\varepsilon$  such that there exists  $e_x$ close enough to $e'$ and $-x\cdot e_x> (R+qT) c_{e_x}^*>0$  for any fixed $T$. Then
   the argument proceeds similarly to Case 1.
\item If $\xi_x\neq e'$, choose   $e_x\in\mathcal A$ close to $\xi_x$ satisfying  $x\cdot e_x<-(R+qT) (c_{e_x}^*-\delta)<0$. The conclusion follows as in Case 1.

 \end{itemize}

Choosing $T_0$ to be the maximum of the times required in each case above completes the proof.
\end{proof}

\subsection{Sets of convergence towards 1 and 0} Intuitively, the quantity $w(e)$ defined in  \eqref{th1-2} separates the asymptotic  regions where the solution $u$ converges to 1 and to 0 as $t\to+\infty$.
 \begin{lemma}\label{Udelta+ts}  Let  $u(t,x;U)$ be the solution of equation~\eqref{equation} with  $u_0(x)=\mathbbm{1}_U$, where $U \subset \mathbb{R}^N$ satisfies $U_\delta \neq \varnothing$ for some $\delta>0$. Let $\mathcal S''$ be any closed bounded subset of the interior of $\mathcal S$ defined in\eqref{mathcals}.  Then,
$$
\ \inf _{x \in U_\delta+t\mathcal S''} u(t, x;U) \rightarrow 1  \text { as } t \rightarrow+\infty .
$$
\end{lemma}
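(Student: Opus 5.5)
The plan is to combine comparison with translation invariance and Theorem~\ref{invasionS}, applied uniformly over the centers $y\in U_\delta$.

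\textbf{Step 1: a single comparison function.} For every $y\in U_\delta$, the ball $B_\delta(y)$ is contained in $U$ up to a null set, so $\mathbbm 1_U\geqslant \mathbbm 1_{B_\delta(y)}$. The comparison principle for \eqref{equation} then gives
\[
u(t,x;U)\geqslant u(t,x;B_\delta(y))=u(t,x-y;B_\delta),
\]
where the last identity uses the translation invariance of \eqref{equation}. The key point is that the lower bound is one fixed function $v(t,z):=u(t,z;B_\delta)$, evaluated at $z=x-y$. Consequently
\[
\inf_{x\in U_\delta+t\mathcal S''}u(t,x;U)\geqslant \inf_{z\in t\mathcal S''}v(t,z),
\]
and this bound is independent of $y$ and of the possibly unbounded shape of $U$.

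\textbf{Step 2: invoke Theorem~\ref{invasionS}.} The set $\overline{B_\delta}$ is compact with nonempty positive interior; if needed, one uses $B_{\delta}$ with $\delta$ slightly reduced. Since $\mathcal S''$ is closed and bounded in $\operatorname{int}\mathcal S=\mathcal S$ (which is open and convex), it is compact. Hence there is an open set $\mathcal S_1$ with $\mathcal S''\subset\mathcal S_1$ and $\overline{\mathcal S_1}\subset\mathcal S$; for instance, take a small open $\varepsilon$-neighbourhood of $\mathcal S''$. The second limit in \eqref{S} gives $\inf_{z\in t\mathcal S_1}v(t,z)\to1$, and this holds in particular on $t\mathcal S''$. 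Combined with $u\leqslant 1$, the claim follows.

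\textbf{Main obstacle.} The main difficulty is the validity of Theorem~\ref{invasionS}, specifically its lower bound, in the biased case $0\notin\mathcal S$. Here the reduction in Step 1 is purely formal and costs nothing. One subtlety is that Theorem~\ref{invasionS} requires $\overline{\mathcal S''}\subset\mathcal S$ with $\mathcal S''$ open; this is handled by the enlargement $\mathcal S_1$ above. A second subtlety is that the comparison principle must hold for possibly discontinuous initial data $\mathbbm 1_U$. I would justify this with the standard $L^\infty$ comparison for the ODE-in-Banach-space formulation of \cite{FKM}. That formulation applies since $f$ is Lipschitz and the convolution is a bounded operator on $L^\infty$.
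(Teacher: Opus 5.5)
Your proposal is correct and follows the paper's proof essentially verbatim. You compare $\mathbbm 1_U$ with the translates $\mathbbm 1_{B_\delta(y)}$ for $y\in U_\delta$, use translation invariance to reduce everything to the single solution with data $\mathbbm 1_{B_\delta}$, and then apply the lower bound in \eqref{S} from Theorem~\ref{invasionS}. Your enlargement of the compact $\mathcal S''$ to an open $\mathcal S_1$ with $\overline{\mathcal S_1}\subset\mathcal S$ fills in a small point that the paper passes over silently.
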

\begin{proof} Let $v$ be the solution to equation~\eqref{equation} with  the initial datum $v_0=\mathbbm{1}_{B_\delta}$.  For a fixed  $\lambda<1$, it follows from~\eqref{S} that  there exists $t_\lambda>0$ such that
$$
\text{ for all } t \geqslant t_\lambda  \text{ and } x \in t\mathcal S'',\ v(t, x)>\lambda.
$$
Now, for any $x_0 \in U_\delta$, we have  $u_0 \geqslant v_0\left(\cdot-x_0\right)$ on $\mathbb{R}^N$. By the comparison principle,
$$
 \text{ for all }t \geqslant t_\lambda \text{ and } x \in x_0+t\mathcal S'',  u(t, x;U) \geqslant  v\left(t, x-x_0\right)>\lambda.
$$
Since this holds for any $x_0 \in U_\delta$ and any $\lambda\in(0,1)$, the result is established.
\end{proof}

\begin{lemma}\label{w>w(e)}Assume that  the hypotheses of Theorem \ref{th2} hold. Let $\mathcal S$ be given by \eqref{se} and $\mathcal W$ be given by \eqref{th2-1}. If $w(e)<+\infty$ for some $e \in \mathbb S^{N-1}$,  then for any $w> w(e)$,
\begin{equation}\label{mathcalC}
\lim_{t\to+\infty} u(t,  t w e) =0.
\end{equation}
\end{lemma}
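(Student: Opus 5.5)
The plan is to use Proposition~\ref{vT}, translated in space, as a barrier above $u$ at the point $twe$. The first step is geometric. We show that if $w>w(e)$ with $w(e)<+\infty$, then $we\notin\overline{\mathbb R^+\mathcal U(U)+\mathcal S}$. We expect this to come from \eqref{th1-2} together with Hypothesis~\ref{mainhypothesis} and \eqref{C1-2}. For each $\xi\in\mathcal U(U)$, the quantity $\sup_\lambda s(e_\xi(\lambda))\lambda/|\lambda e-(1-\lambda)\xi|$ is exactly the largest $r$ with $re\in\mathbb R^+\xi+\overline{\mathcal S}$. Indeed, writing $re=\mu\xi+z$ with $z\in\overline{\mathcal S}$ and normalizing $z=s(\hat z)\hat z$ gives this parametrization with $\hat z=e_\xi(\lambda)$. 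We then use compactness of $\overline{\mathcal S}$ and closedness of $\mathcal U(U)$. Closedness holds because $\mathcal U(U)$ is the set of $\xi$ with $\operatorname{dist}(\tau\xi,U)=o(\tau)$, which is closed under limits by a diagonal argument. From this we obtain $\rho>0$ and $q>1$ close to $1$ such that
\[
B_\rho(we)\cap\bigl(\mathbb R^+\mathcal U(U)+q\overline{\mathcal S}+B_\rho\bigr)=\varnothing .
\]
Equivalently, no point of $\mathbb R^+\mathcal U(U)$ lies in $we-q\overline{\mathcal S}+B_\rho$.

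The second step converts this into a statement about $U$ at scale $t$. We claim that for $t$ large,
\[
U\cap\bigl(twe-(R+qt)\overline{\mathcal S}\bigr)=\varnothing ,
\]
with $R$ and $q$ taken from Proposition~\ref{vT}. Suppose not. Then there are $t_n\to\infty$ and $y_n\in U$ with $y_n/t_n\in we-(q+R/t_n)\overline{\mathcal S}$, which is a bounded set. Extract a limit $y_n/t_n\to y$. If $y\neq0$ (and if $y=0$ we use $\mathcal B(U)$ directly), then $\hat y$ cannot be in $\mathcal B(U)$, since $\operatorname{dist}(|y_n|\hat y,U)=o(|y_n|)$. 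By \eqref{SN-1} and \eqref{udelta=u}, this forces $\hat y\in\mathcal U(U_\delta)\subset\mathcal U(U)$. Hence $y\in\mathbb R^+\mathcal U(U)\cap\bigl(we-q\overline{\mathcal S}\bigr)$, which contradicts step one. The case $0\in we-q\overline{\mathcal S}$ is excluded in the same way, because $0\notin\mathcal W$ near $we$. This step is where hypothesis \eqref{SN-1} is genuinely used: without it, a sequence of directions of $U$ could accumulate in a direction that is neither bounded nor unbounded.

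The third step is the comparison. Fix $\lambda\in(0,1)$ and $T$ large. Set $\bar v(t,x):=v^T(t,x-Twe)$ on $[0,T]$, where $v^T$ is from Proposition~\ref{vT}. By \eqref{vT-1} we have $\bar v(0,\cdot)\geqslant1$ outside $Twe-(R+qT)\overline{\mathcal S}$. By step two, this region contains $U$, so $\bar v(0,\cdot)\geqslant\mathbbm 1_U=u_0$. Because $\bar v$ is a supersolution (and $\bar v\geqslant0$), the comparison principle gives $u(T,Twe)\leqslant\bar v(T,Twe)=v^T(T,0)<\lambda$. Since $\lambda$ is arbitrary, \eqref{mathcalC} follows.

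We expect the main obstacle to be step one: the exact identification of $w(e)$ as the exit radius of the ray $\mathbb R^+e$ from $\mathbb R^+\mathcal U(U)+\overline{\mathcal S}$. This must hold even when $s(e_\xi(\lambda))$ is negative or $-\infty$. The $C^1$ boundary and \eqref{C1-2} are needed so that each boundary point of $\mathcal S$ along $e_\xi(\lambda)$ is represented by $s(e_\xi(\lambda))$ rather than by $-s(-e_\xi(\lambda))$. We will handle this first by direct computation of the planar problem in $\operatorname{span}\{e,\xi\}$, then take the supremum over $\xi$. A compactness argument will upgrade the strict inequality to a uniform margin $\rho$.
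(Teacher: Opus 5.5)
This is essentially the paper's proof. You translate the barrier of Proposition~\ref{vT} by $Twe$, show by contradiction (extracting $x_n/T_n\to y$ with $we-y\in q\overline{\mathcal S}$) that $U$ misses $Twe-(R+qT)\overline{\mathcal S}$ for large $T$, and compare to get $u(T,Twe)\leqslant v^T(T,0)<\lambda$. The geometric input is your Step 1 fact that $we$ stays at positive distance from $\mathbb R^+\mathcal U(U)+\overline{\mathcal S}$; the paper uses this implicitly, through the representation argued in the proof of Theorem~\ref{th2}, when it writes ``which contradicts the assumption''.

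Your write-up is more careful than the paper's at two points. First, you invoke \eqref{SN-1} and $\mathcal U(U_\delta)\subset\mathcal U(U)$ to get $\hat y\in\mathcal U(U)$, whereas the paper simply asserts that $y$ lies in the asymptotic cone. Second, you fix $q$ through a uniform margin around $we$ before extracting the sequence, whereas the paper fixes $q_0$ and only afterwards lets it tend to $1$.
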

 \begin{proof}
This proof is based on Proposition~\ref{vT}. Fix any $e\in\mathbb S^{N-1}$, $w>w(e)$, $\lambda<1$ and $q_0>1$. By Proposition~\ref{vT}, there exist constants $R_0>0$, $T_0>0$, and a family supersolutions $\{v^T\}_{T>T_0}$ satisfying \eqref{vT-1}. Define a translated function $\overline v(t,x):=v^T(t,x-Twe)$, which   is also a supersolution.
 Suppose there exists $T_1>T_0$ such that for all $T\geqslant T_1$ and all $x\in U$, we have
 \begin{equation} \label{U-twe}
 x-Twe\notin -(R_0+q_0T)\overline{\mathcal S}.
 \end{equation}
 Then, from \eqref{vT-1}, $\overline v(0,x)=v^T(0,x-Twe)\geqslant 1=u(0,x)$ for $x\in U$,  and $u(0,x)=0\leqslant \overline v(0,x)$ for $x\not\in U$. The comparison principle yields $u(t,x)\leqslant \overline v(t,x)$ on $[0,T]\times \mathbb R^N$. In particular, evaluating at $t=T$ and $x=Twe$ gives
 \[u(T,Twe)\leqslant \overline v(T,Twe)=v^T(T,0)<\lambda.\]
 Since $\lambda\in(0,1)$ is arbitrary and $T\geqslant T_1$ can be chosen arbitrarily large, it follows that  $\lim_{t\to+\infty}u(t,twe)=0$.

 It remains to verify that condition \eqref{U-twe} holds for sufficiently large $T$. We proceed by contradiction. If \eqref{U-twe} fails, then  for the fixed $q_0>1$ and $R_0>0$ from Proposition~\ref{vT}, there exist  sequence $T_n\to+\infty $   and points $x_n\in U$ such that
\[x_n-T_nwe\in -(R_0+q_0T_n)\overline{\mathcal S} \text{  for all } n.
\]
This is equivalent to
 \[we-\frac{x_n}{T_n}\in \frac{R_0+q_0T_n}{T_n}\overline{\mathcal S}.
 \]
Since $q_0>1$ is fixed and $\mathcal S$ is bounded,  the sequence  $\{\dfrac{x_n}{T_n}\}$ is bounded.  Passing to a subsequence, we may assume $\dfrac{x_n}{T_n}\to y$ for some $y\in\mathbb R^N$.  Taking the limit as $n\to+\infty$  in the inclusion above, and noting that$(R_0+q_0T_n)/T_n\to q_0$, we obtain
 \[
 we-y\in q_0\overline {\mathcal S}.
 \]
 Furthermore, since $x_n\in U$ and $T_n\to+\infty$, the limit point $y$ must belong to the asymptotic cone of $U$, i.e., $y\in \mathcal U(U)$. Therefore,  $we\in \mathcal U(U)+q_0\overline{\mathcal S}$. Since $q_0>1$ can be chosen arbitrarily close to $1$, it follows that $we\in \mathcal U(U)+\mathcal S$, which contradicts the assumption. Hence, condition \eqref{U-twe} must hold for large $T$, completing the proof.
 \end{proof}

\subsection{Proofs of theorems} This subsection presents the proofs of Theorems~\ref{th2}--\ref{th1}. The logical order of the proofs is as follows: Theorem~\ref{th3}, Theorem~\ref{th2}, and finally Theorem~\ref{th1}.

 \begin{proof}[Proof of Theorem \ref{th3}]
The goal is to establish the Hausdorff distance convergence stated in \eqref{th3-1}. The proof proceeds in two main steps, corresponding to the two inclusions (up to $o(t)$ errors) that collectively imply the convergence.

 Since $d_{\mathcal H}(U,U_\delta)<+\infty$,  there exists a constant $M>0$ such that for every point  $y\in U$,  there is a point  $y'\in U_\delta$ with  $|y-y'|<M$. Now, fix an arbitrary $\varepsilon>0$ and define the contracted set
 \[\mathcal S''=\{x:x\cdot e\leqslant c_e^*-\varepsilon  \text{ for all }e\in\mathbb S^{N-1} \}.
 \]
By construction and Lemma   \ref{Udelta+ts},  $\mathcal S''$  is a closed subset of the interior of $\mathcal S$ and
  \[\inf_{x\in U_\delta+t\mathcal S''}u(t,x)\to1 \text{ as } t\to+\infty.\] Consequently, for any $\lambda\in(0,1)$ and all sufficiently large $t$,  it follows that
    $U_\delta +t\mathcal S''\subset F_\lambda(t)$. Since $|y-y'|<M$ and $\mathcal S''$ approximates $\mathcal S$ from the interior as $\varepsilon\to 0^+$, this implies
 \begin{equation}\label{proof-th3-1}
\sup _{x \in U+t\mathcal S} \operatorname{dist}\left(x, F_\lambda(t)\right)=o(t) \text { as } t \rightarrow+\infty .
\end{equation}

We now prove the complementary inclusion. Fix $\lambda\in(0,1)$ and  $q>1$. Let  $\{v^T\}$ be the family of supersolutions from Proposition~\ref{vT}, corresponding to parameters  $R>0$ and $T_0>0$. Suppose, for some large $t>T_0$,  there exists a point $x_0\in F_\lambda(t)$ such that  $x_0\not\in U  + (R+qt) \mathcal S$. The latter condition is equivalent to $x_0-(R+qt)\mathcal S\subset \mathbb R^N\backslash U$. Now,   consider the translated supersolution  $w(t,x)=v^T(t,x-x_0)$. By the construction in Proposition~\ref{vT}, we have
\[w(0,x)\geqslant  \mathbbm 1_{\mathbb R^N\backslash x_0-(R+qt)\mathcal S}\geqslant u_0(x) \text{ for all }x\in\mathbb R^N .
\]
 The comparison principle then yields $u(t,x_0)\leqslant w(t,x_0)=v^T(t,0)<\lambda$. However, this contradicts the assumption that $x_0\in F_\lambda(t)$.  Therefore, we must have $F_\lambda(t)\subset U+(R+qt)\mathcal S $ for all sufficiently large $t$.  Since $q>1$ can be chosen arbitrarily close to $1$, it follows that
\begin{equation}\label{proof-th3-2}
\sup _{x \in F_\lambda(t)} \operatorname{dist}\left(x, U+t \mathcal S\right)=o(t) \text { as } t \to +\infty.
\end{equation}

The combination of the two asymptotic estimates \eqref{proof-th3-1} and \eqref{proof-th3-2} yields the desired Hausdorff convergence result \eqref{th3-1}, thus completing the proof of the theorem.
\end{proof}

\begin{proof}[Proof of Theorem~\ref{th2}]  The theorem asserts that the spreading set $\mathcal{W}$ coincides with the geometric set $\mathbb R^+\mathcal U(U)+\mathcal S $. Define the set
\[
\mathcal {D}:=\left\{r e:e\in\mathbb S^{N-1}, -w(-e)< r<w(e)\right\},
\]
 where $w(e)$ is given by \eqref{th1-2}. Our goal is to show that
 $\mathcal{W}=\mathcal {D}=\mathbb R^+\mathcal U(U)+\mathcal S$, with the convention that $\mathbb R^+\varnothing+\mathcal S=\mathcal S$.

 \textit{Special case}. If $e \in \mathcal{U}(U)$ and $s(e)$ is bounded, then  by the definition of $w(e)$ and the convention, we have $w(e)=+\infty$. Consequently, the ray $\{\alpha e: \alpha \geqslant 0\}$  is contained in $\mathcal {D}$, and thus $\mathbb R^+\mathcal U(U)+\mathcal S$ is unbounded in the direction $e$. The equality holds in this case.

\textit{$\mathbb R^+\mathcal U(U)+\mathcal S \subset \mathcal D  $}.
 Let $re\in \mathbb R^+\mathcal U(U)+\mathcal S$ with $r\geqslant0$. Then there exist $\alpha\geqslant 0$, $\xi\in\mathcal U(U)$,  and $s\in \mathcal S$ such that   $re=\alpha\xi+s$. Define the parameter
 \[\lambda:=\frac{r}{r+\alpha} \in(0,1] (\text{with the understanding that } \lambda =0 \text{ if } r=0).
 \]
Consider the vector $z=re-\alpha\xi=s\in\mathcal S$. If $r>0$, we can write   $z=\frac{r}{\lambda}(\lambda e-(1-\lambda) \xi)$. Let $e_{\xi}(\lambda)=\hat z$ be the unit direction of $z$. Since  $z\in \mathcal S$, we have $|z|\leqslant s(\hat z)$. This implies
\[
\frac{r}{\lambda}|(\lambda e-(1-\lambda) \xi)|\leqslant s(e_{\xi}(\lambda)),
\]
which is equivalent to
\[
r \leqslant \frac{s\left(e_{\xi}(\lambda)\right) \lambda}{|\lambda e-(1-\lambda) \xi|} \leqslant w(e) .
\]
If $r=0$, then $z=-\alpha\xi$, and since $z\in\mathcal S$, we have $0\leqslant s(-\xi)$, which is consistent. A symmetric argument for the direction $-e$ shows that for any $r(-e)\in\mathbb R^+\mathcal U(U)+\mathcal S$ with $r \geqslant 0$, we have $r\leqslant w(-e)$. Therefore, any point $re\in\mathbb R^+\mathcal U(U)+\mathcal S$ satisfies $-w(-e)<r<w(e)$, confirming the inclusion $\mathbb R^+\mathcal U(U)+\mathcal S \subset \mathcal D$.

\textit{$\mathbb R^+\mathcal U(U)+\mathcal S \supset \mathcal D  $}. Conversely, take any $re\in\mathcal D$ with $r\geqslant 0$, so that  $r<w(e)$. By the definition of $w(e)$, there exist $\xi \in \mathcal{U}(U)$ and $\lambda \in[0,1]$ such that
$$ r \leqslant \frac{s\left(e_{\xi}(\lambda)\right) \lambda}{|\lambda e-(1-\lambda) \xi|}.
$$
Define
$$
\alpha:=\frac{1-\lambda}{\lambda} r\geqslant 0,\  y:=r e-\alpha \xi=\frac{r}{\lambda}(\lambda e-(1-\lambda) \xi) .
$$
Then
$$
|y|=\frac{r}{\lambda}|\lambda e-(1-\lambda) \xi| \leqslant s\left(e_{\xi}(\lambda)\right),
$$
so $y \in \mathcal{S}$. Hence
$$
r e=\underbrace{\alpha \xi}_{\in\mathbb R^+\mathcal U(U)}+\underbrace{y}_{\in \mathcal{S}} \in \mathbb R^+\mathcal U(U)+\mathcal{S} .
$$

Similarly,  for any $r<w(-e)$ with $r\geqslant 0$, $r(-e) \in \mathbb R^+\mathcal U(U)+\mathcal{S}$, which means that  for any  $re \in \mathcal D$ with $r\in\mathbb R$ satisfies  $re\in \mathbb R^+\mathcal U(U)+\mathcal{S}$ for any $e\in\mathbb S^{N-1}$. It follows that  $\mathbb R^+\mathcal U(U)+\mathcal S \supset \mathcal D$.

Therefore, for all $e\in\mathbb S^{N-1}$, we have
$$
\mathcal{W}=\{r e: -w(-e)< r<w(e)\}=\mathbb{R}^{+} \mathcal{U}(U)+\mathcal{S}.
$$
If $\mathcal{U}(U)=\varnothing$, then $\mathbb{R}^{+} \mathcal{U}(U)=\varnothing$, so $\mathbb{R}^{+} \mathcal{U}(U)+\mathcal{S}=\mathcal{S}$. In this case $\mathcal{W}=\mathcal{S}$, which is consistent with the convention $\mathbb{R}^{+} \varnothing+\mathcal{S}=\mathcal{S}$.

\textit{Existence of the Spreading Set $\mathcal W$}. We now verify that $\mathcal W$ satisfies the definition in \eqref{definitionw}. Fix a compact set $C$ contained in $\mathcal W$. For  any $\xi \in \mathcal{U}(U)=\mathcal{U}\left(U_\delta\right)$(if it exists by \eqref{udelta=u}), and any $\tau>0$ and $\mathcal S''\subset \mathcal S$, the definition of $\mathcal{U}\left(U_\delta\right)$ yields
$$
\frac{1}{t} \operatorname{dist}\left(t \tau \xi, U_\delta\right) \rightarrow 0 \quad \text { as } t \rightarrow+\infty,
$$
hence $t \tau \xi+t\mathcal  S''  \subset U_\delta+t\mathcal S$ for $t$ sufficiently large. It then follows from Lemma \ref{Udelta+ts} that
\begin{equation}\label{proof-th2-2}
\inf_{x \in \tau \xi+\mathcal S''} u(t, t x) \rightarrow 1 \quad \text { as } t \rightarrow+\infty.
\end{equation}
Note that the above limit holds good when $\tau=0$ (without any reference to $\xi$ ) due to Theorem \ref{invasionS}. Moreover, the expression \eqref{th2-2} implies that any point $x \in \mathcal{W}$ is contained either in $\mathcal S_x''$ or in $\tau_x \xi_x+\mathcal S_x''$, for certain open set  $\mathcal S_x''\subset\mathcal S,$ $\xi_x \in \mathcal{U}(U)$, and $\tau_x>0$. Then, by compactness, $C$ can be covered by a finite number of such translated open sets and therefore, since \eqref{proof-th2-2} holds in each one of them, the first limit in~\eqref{definitionw} follows.

Consider now a compact set $C$ included in $\mathbb{R}^N \backslash \overline{\mathcal{W}}$. Any point $y \in C$ is such that $e:=\hat y$ satisfies $w(e)<|y|<+\infty$ or $|y|<-w(-e)$, hence necessarily $e \in \mathcal{B}(U)$, or $e \in \mathcal{U}(U)$ and $s(e)=-\infty$, because otherwise~\eqref{SN-1} would yield $e \in \mathcal{U}\left(U_\delta\right)=\mathcal{U}(U)$ and then $w(e)=+\infty$ by the convention. As a consequence, for an arbitrary $\lambda>0$, applying Lemma~\ref{w>w(e)} with $w \in(w(e),|y|)$ or $w\in(-|y|,-w(-e))$, we infer the existence  some $t_y>0$ such that
$$
\forall t>t_y, \quad u(t, t |y| e)<\lambda \text{ or }u(t, -t |y| e)<\lambda .
$$
By a covering argument we can find $t_C>0$ such that
$$
\forall t>t_C, \forall x \in C, \quad u(t, t x)<\lambda.
$$
This gives the second limit in~\eqref{definitionw}.

The rest of the proof follows the same idea as in Hamel and Rossi~\cite{HR}, and is omitted here.
\end{proof}
\begin{proof}[Proof of Theorem~\ref{th1}] The first limit in~\eqref{th1-1} is a direct consequence  of the first limit in the definition of spreading set $\mathcal W$. The second one involves only   directions $e$ for which $w(e)<+\infty$, these are precisely the directions  $e\in \mathcal B(U)$, or $e\in \mathcal U(U)$ and $s(e)=-\infty$. The result then follows immediately from Lemma~\ref{w>w(e)} by taking $w>w(e)$.
\end{proof}

\section*{Acknowledgments}
\noindent

The authors would like to thank Prof. Luca Rossi (Sapienza University of Rome), Dr. Lele Du (Sapienza University of Rome), and  Dr. Teng-Long Cui (Lanzhou University) for their constructive comments and helpful suggestions.  H. Guo was supported by the fundamental research funds for the central universities and NSF of China (No. 12471201).  W.-T. Li was partially supported by NSF of China (12531008;12271226).

\end{document}